\patchcmd{\section}{\scshape}{\bfseries}{}{}
\renewcommand{\@secnumfont}{\bfseries}
\DeclareMathOperator{\calB}{\mathcal{B}}
\DeclareMathOperator{\calC}{\mathcal{C}}
\DeclareMathOperator{\bbN}{\mathbb{N}}
\DeclareMathOperator{\supp}{supp}
\DeclareMathOperator{\usupp}{usupp}
\DeclareMathOperator{\sgn}{sgn}
\DeclareMathOperator{\pow}{\mathcal{P}}
\DeclareMathOperator{\id}{\textrm{id}}
\DeclareMathOperator{\Ker}{Ker}
\DeclareMathOperator{\Miso}{\mathcal{M}_{iso}}
\theoremstyle{definition}
\newtheorem{mydef}{\textbf{Definition}}[section]
\newtheorem{myeg}[mydef]{\textbf{Example}}
\newtheorem{mythm}[mydef]{\textbf{Theorem}}
\newtheorem{rmk}[mydef]{\textbf{Remark}}
\theoremstyle{plain}
\newtheorem*{nothma}{\textbf{Theorem A}}
\newtheorem*{nothmb}{\textbf{Theorem B}}
\newtheorem{lem}[mydef]{\textbf{Lemma}}
\newtheorem{pro}[mydef]{\textbf{Proposition}}
\newtheorem{cor}[mydef]{\textbf{Corollary}}
\newtheorem*{que}{\textbf{Question}}
\newcommand{\com}[1]{\ignorespaces}
\newcommand{\set}[2]{\left\{#1~\middle|~#2\right\}}
\newcommand{\fun}{\mathbb{F}_1}
\newcommand{\nc}{\newcommand}
\nc{\on}{\operatorname}
\nc{\ol}{\overline}
\nc{\h}{\mathfrak{h}}
\nc{\g}{\mathfrak{g}}
\nc{\n}{\mathfrak{n}}
\nc{\ch}{\on{CH}}
\nc{\wt}{\widetilde}
\nc{\FF}{{F}}
\nc{\Mat}{{\mathbf{Mat}}}
\nc{\F}{\mc{F}}
\nc{\C}{\mathcal{C}}
\nc{\M}{\on{M}}
\nc{\T}{\mc{T}}
\nc{\G}{\mc{G}}
\nc{\ov}{\overline}
\nc{\VFun}{\on{Vect}(\fun)}
\nc{\FFF}{\mathbb{F}}
\nc{\ses}[3]{  #1 \hookrightarrow #2 \twoheadrightarrow #3 }
\nc{\f}{\mathbf{f}}
\patchcmd{\abstract}{\scshape\abstractname}{\normalsize{\textbf{\abstractname}}}{}{}
\begin{document}
\title{Hopf algebras for matroids over hyperfields}
\author{Christopher Eppolito}
\address{Department of Mathematical Sciences, Binghamton University, Binghamton, NY, 13902, USA}
\email{eppolito@math.binghamton.edu}
\author{Jaiung Jun}
\address{Department of Mathematical Sciences, Binghamton University, Binghamton, NY, 13902, USA}
\email{jjun@math.binghamton.edu}
\author{Matt Szczesny}
\address{Department of Mathematics and Statistics, Bostin University, 111 Cummington Mall,
Boston, USA}
\email{szczesny@math.bu.edu}

\subjclass[2010]{05E99(primary), 16T05(secondary).}
\keywords{matroid, hyperfield, matroid over hyperfields, Hopf algebra, minor, direct sum}
\thanks{}
\date{\today}
\begin{abstract}
  \normalsize{Recently, M.~Baker and N.~Bowler introduced the notion
    of \emph{matroids over hyperfields} as a unifying theory of
    various generalizations of matroids. In this paper we generalize
    the notion of minors and direct sums from ordinary matroids to
    matroids over hyperfields. Using this we generalize the classical
    construction of matroid-minor Hopf algebras to the case of
    matroids over hyperfields.}
\end{abstract}

\maketitle

\vspace*{6pt} % for this guide only.
% A table of contents should normally not be included

\section{Introduction}

A basic result in algebraic geometry is that the category
$\textbf{Aff}_k$ of affine schemes over a field $k$ is equivalent to
the opposite category $\textbf{Alg}_k^{\textrm{op}}$ of the category
of commutative $k$-algebras. When one enhances $\textbf{Aff}_k$ to
affine group schemes over $k$, one obtains \emph{Hopf algebras} as an
enrichment of commutative $k$-algebras. In fact, Hopf algebras
naturally appear not only in algebraic geometry, but also in various
fields of mathematics including (but not limited to) algebraic
topology, representation theory, quantum field theory, and
combinatorics. For a brief historical background for Hopf algebras, we
refer the readers to \cite{andruskiewitsch2009beginnings}. For a
comprehensive introduction to Hopf algebras in combinatorics, we refer
the readers to \cite{grinberg2014hopf}. In this paper, our main
interest is in Hopf algebras arising in combinatorics, namely those
obtained from \emph{matroids} (or more generally \emph{matroids over
  hyperfields}).

Matroids are combinatorial objects arising in two main ways: (1) as a
model of cycle structures in graphs and (2) as combinatorial
abstractions of linear independence properties in vector spaces. While
matroids have their own charms, it is their rich interplay with other
areas of mathematics which makes them truly interesting. For instance,
N.~Mn\"{e}v's universality theorem \cite{mnev1988universality} roughly
states that any semi-algebraic set in $\mathbb{R}^n$ is the moduli
space of realizations of an oriented matroid (up to homotopy
equivalence). There is an analogue in algebraic geometry known as
Murphy's Law by R.~Vakil \cite{vakil2006murphy} for ordinary matroids. Valuated matroids (a generalization of matroids by giving certain
``weights'' to bases) are analogous to ``linear spaces'' in the
setting of tropical geometry.  Indeed a moduli space of valuated
matroids (a tropical analogue of a Grassmannian), called a Dressian,
has received much attention.

Hopf algebras arising in combinatorics are usually created to encode
the basic operations of an interesting class of combinatorial
objects. Matroids have several basic operations, the most basic of
which are \emph{deletion}, \emph{contraction}, and \emph{direct sum};
an iterated sequence of deletions and contractions on a matroid
results in a \emph{minor} of the matroid. The Hopf algebra associated
to a set of isomorphism classes of matroids closed under taking
minors and direct sums is called a \emph{matroid-minor} Hopf
algebra. In this paper, we generalize the construction of the
matroid-minor Hopf algebra to the setting of \emph{matroids over
  hyperfields}, first introduced by M.~Baker and N.~Bowler in
\cite{baker2016matroids}.

\begin{rmk}
  In fact, many combinatorial objects posses notions of ``deletion''
  and ``contraction'' and hence one can associate Hopf algeras (or
  bialgebras in the disconnected case). In \cite{dupont2017universal},
  C.~Dupont, A.~Fink, and L.~Moci associate a \emph{universal Tutte
    character} to such combinatorial objects specializing to Tutte
  polynomials in the case of matroids and graphs (among others)
  generalizing the work \cite{krajewski2015combinatorial} of
  T.~Krajewski, I.~Moffatt, and A.~Tanasa. See \S \ref{section: Tutte}
  in connection with our work.
\end{rmk}

\emph{Hyperfields} were first introduced by M.~Krasner in his work
\cite{krasner1956approximation} on an approximation of a local field of
positive characteristic by using local fields of characteristic
zero. Krasner's motivation was to impose, for a given multiplicative
subgroup $G$ of a commutative ring $A$, ``ring-like'' structure on the
set of equivalence classes $A/G$ ($G$ acts on $A$ by left
multiplication). Krasner abstracted algebraic properties of $A/G$ and
defined hyperrings, in particular, hyperfields. Roughly speaking, hyperfields are fields with
\emph{multi-valued} addition. For instance, when $A=k$ is a field and
$G=k-\{0\}$, one has $k/G=\{[0],[1]\}$, where $[0]$ (resp. $[1]$) is
the equivalence of $0$ (resp. $1$). Then one defines
$[1]+[1]=\{[0],[1]\}$. This structure is called the \emph{Krasner hyperfield} (see, Example \ref{example: Krasner hyperfield}). After Krasner's work, hyperfields (or
hyperrings, in general) have been studied mainly in applied
mathematics. Recently several authors (including the second author of
the current paper) began to investigate hyperstructures in the context
of algebraic geometry and number theory. Furthermore, very recently
M.~Baker (later with N.~Bowler) employ hyperfields in combinatorics:
Baker and Bowler found a beautiful framework which simultaneously
generalizes the notion of linear subspaces, matroids, oriented
matroids, and valuated matroids. In light of various fruitful
applications of Hopf algebra methods in combinatorics, one might
naturally ask the following:

\begin{que}\label{question}
  Can we generalize matroid-minor Hopf algebras to the case of
  matroids over hyperfields?
\end{que}

We address this question in this paper. We first define minors for
matroids over hyperfields which generalize the definition of minors
for ordinary matroids:

\begin{nothma}[\S \ref{section: minor}]
  Let $H$ be a hyperfield. There are two cryptomorphic definitions
  (circuits and Grassmann-Pl\"{u}cker functions) of minors of matroids
  over $H$. Furthermore, if $M$ is a weak (resp.~ strong) matroid over
  $H$, then all minors of $M$ are weak (resp.~ strong).
\end{nothma}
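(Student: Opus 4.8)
The plan is to define deletion $M\setminus e$ and contraction $M/e$ for a single element $e$ of the ground set $E$, first via circuits and then via Grassmann--Pl\"ucker functions, to check that each description is well defined, to show the two agree, and finally to read off the weak/strong assertion. A general minor is then an iterated sequence of single-element deletions and contractions, and the fact that the resulting matroid over $H$ does not depend on the order reduces to the corresponding classical matroid facts for $\underline{M}$ (the underlying ordinary matroid), to the projective nature of Grassmann--Pl\"ucker functions, and to the bookkeeping identities $M\setminus e=M/e$ when $e$ is a loop or a coloop. Write $d$ for the rank throughout and recall Baker and Bowler's cryptomorphism, under which the circuits of $M$ are recovered, up to $H^{\times}$-rescaling, from a Grassmann--Pl\"ucker function $\varphi_{M}$ by the Cramer-type formula evaluated along the fundamental circuits of $\underline{M}$.

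On the circuit side I would set $\mathcal{C}(M\setminus e):=\{X\in\mathcal{C}(M):e\notin\underline{X}\}$ and let $\mathcal{C}(M/e)$ be the set of support-minimal nonzero elements of $\{\,X|_{E\setminus e}:X\in\mathcal{C}(M)\,\}$, and then verify Baker and Bowler's circuit axioms. For deletion everything is inherited from $M$; for contraction the content is that two shortened circuits with the same minimal support are $H^{\times}$-multiples of one another and that the modular circuit elimination axiom survives restriction to $E\setminus e$, which is the fiddliest point if one insists on a purely circuit-theoretic proof. On the Grassmann--Pl\"ucker side, if $e$ is not a loop of $\underline{M}$ I would put $\varphi_{M/e}(x_{1},\dots,x_{d-1}):=\varphi_{M}(e,x_{1},\dots,x_{d-1})$ on $(E\setminus e)^{d-1}$, and if $e$ is not a coloop I would put $\varphi_{M\setminus e}:=\varphi_{M}|_{(E\setminus e)^{d}}$ (a loop is deleted rather than contracted, a coloop is contracted rather than deleted). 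That $\varphi_{M\setminus e}$ is alternating, not identically zero (here $e$ not a coloop furnishes a basis of $\underline{M}$ avoiding $e$), and Grassmann--Pl\"ucker is immediate, since every relation for $\varphi_{M\setminus e}$ is literally a relation for $\varphi_{M}$. For $\varphi_{M/e}$ the key computation is that the Grassmann--Pl\"ucker relation for $\varphi_{M/e}$ on a $d$-tuple $(a_{1},\dots,a_{d})$ and a $(d-2)$-tuple $(b_{1},\dots,b_{d-2})$ coincides with the relation for $\varphi_{M}$ on $(e,a_{1},\dots,a_{d})$ and $(e,b_{1},\dots,b_{d-2})$: the term obtained by deleting the entry $e$ from the long tuple carries the factor $\varphi_{M}(e,e,b_{1},\dots,b_{d-2})=0$, and each of the remaining terms matches the corresponding term for $\varphi_{M/e}$ after the re-indexing and the single sign change from moving $e$ past $a_{i}$. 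Since this produces each relation of $\varphi_{M/e}$ from a single relation of $\varphi_{M}$ of the same (weak or strong) type, and deletion only restricts relations, this shows simultaneously that $\varphi_{M\setminus e}$ and $\varphi_{M/e}$ are genuine Grassmann--Pl\"ucker functions \emph{and} that the weak (resp.\ strong) property is preserved.

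It remains to match the two descriptions. Classically $\underline{M\setminus e}=\underline{M}\setminus e$ and $\underline{M/e}=\underline{M}/e$ for both constructions, and a matroid over $H$ is determined by its circuit set, so it suffices to see that the circuit sets coincide. For deletion one computes the fundamental circuits of $\underline{M}\setminus e$ from a basis contained in $E\setminus e$; the Cramer-type formula then evaluates $\varphi_{M}$ only on tuples from $E\setminus e$, i.e.\ evaluates $\varphi_{M\setminus e}$, so the circuit it produces is exactly the one prescribed by $\mathcal{C}(M\setminus e)$. For contraction, fix a basis $B'$ of $\underline{M}/e$ and an element $f\notin B'\cup\{e\}$; the fundamental circuit of $M/e$ at $(f,B')$ computed from $\varphi_{M/e}$ has $B'$-entries the quotients $\varphi_{M}(e,\dots,f,\dots)/\varphi_{M}(e,\dots)$, which are precisely the $B'$-entries of the circuit of $M$ at $(f,\{e\}\cup B')$; that circuit additionally has an $e$-entry, restricting it to $E\setminus e$ deletes exactly that entry, and on the matroid level $C(f,\{e\}\cup B')\setminus e=C(f,B')$ is the corresponding fundamental circuit of $\underline{M}/e$. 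Hence $\mathcal{C}(M/e)$ agrees with the circuit set attached to $\varphi_{M/e}$, and the weak/strong conclusion transports from the Grassmann--Pl\"ucker side to the circuit side.

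I expect the real obstacle to be the contraction case: checking modular circuit elimination directly for $\mathcal{C}(M/e)$, and, on the Grassmann--Pl\"ucker side, confirming that $\varphi_{M/e}$ --- and, for a general minor, the function obtained by contracting $\varphi_{M}$ along a maximal independent subset of the contracted set --- has the expected underlying matroid and does not depend, beyond an $H^{\times}$-scalar, on the chosen independent set or ordering. A clean way to sidestep the hardest axiom verification is to take the Grassmann--Pl\"ucker definition as primary, where well-definedness is the short computation above, and then \emph{deduce} the circuit axioms for $\mathcal{C}(M/e)$ from the cryptomorphism; for the write-up, though, it is worth recording both definitions and proving their equivalence as indicated.
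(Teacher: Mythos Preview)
Your approach is correct but differs from the paper's in two structural ways. First, the paper works with an arbitrary subset $S\subseteq E$ at once rather than element by element: it defines restriction $M|S$ directly (on circuits in Proposition~\ref{circuit-restriction} and on Grassmann--Pl\"ucker functions in Proposition~\ref{gpf-restriction}), verifies type preservation there, and then sets $M\setminus S:=M|(E\setminus S)$. Second, and more significantly, the paper defines contraction \emph{via duality}, $M/S:=(M^*\setminus S)^*$, leaning on Baker--Bowler's result that duality preserves type; the explicit circuit description of $M/S$ is therefore the perp-set formula $(\calC^*|(E\setminus S))^*$, not your ``support-minimal restrictions of circuits'' formula, and the Grassmann--Pl\"ucker formula $\varphi''(\mathbf{x})=\varphi(\mathbf{B},\mathbf{x})$ is then \emph{derived} from the duality definition rather than taken as primary. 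Your direct argument that the Grassmann--Pl\"ucker relations for $\varphi_{M/e}$ embed into those for $\varphi_M$ is essentially the computation the paper carries out for restriction (with the fixed tuple appended at the other end), so on the Grassmann--Pl\"ucker side the two routes are very close. What your approach buys is that it avoids invoking the duality machinery; what the paper's approach buys is that the loop/coloop special-casing and the ``order of single-element operations does not matter'' bookkeeping disappear, since $M/S$ is defined in one shot and Corollary~\ref{cor: H-matroid minor properties} is then a short consequence of the subset-level formulas. Your proposed circuit description of contraction is equivalent to the paper's perp description (your Cramer-formula matching of fundamental circuits is exactly what is needed), but note that the paper never attempts a direct verification of the modular elimination axiom for contraction---your instinct to sidestep that via the Grassmann--Pl\"ucker cryptomorphism is the right one, and the paper effectively does the same by routing through duality.
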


Next, we introduce the notion of direct sums of matroids over
hyperfields and prove that direct sums preserve the type (weak or
strong) of matroids over hyperfields in the following sense:

\begin{nothmb}[\S \ref{section: minor}]
  Let $H$ be a hyperfield. There are two cryptomorphic definitions
  (circuits and Grassmann-Pl\"{u}cker functions) of direct sums of
  matroids over $H$. Furthermore, if $M_1$ and $M_2$ are (weak or
  strong) matroids over $H$, then the direct sum $M=M_1\oplus M_2$ is
  always a weak matroid over $H$ and $M$ is strong if and only if both
  $M_1$ and $M_2$ are strong.
\end{nothmb}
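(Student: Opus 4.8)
\emph{The two definitions.} Write $M_1$ on ground set $E_1$ and $M_2$ on ground set $E_2$, and put $E = E_1 \sqcup E_2$. In the circuit description, let $\mathcal{C}_1$ and $\mathcal{C}_2$ denote the collections of (projective equivalence classes of) $H$-circuits of $M_1$ and $M_2$; I define the $H$-circuits of $M_1 \oplus M_2$ to be $\mathcal{C} := \iota_1(\mathcal{C}_1) \cup \iota_2(\mathcal{C}_2)$, where $\iota_j$ extends an $H$-signed subset supported on $E_j$ by the value $0$ on the complementary block. In the Grassmann-Pl\"{u}cker description, say $M_j$ has rank $r_j$ and Grassmann-Pl\"{u}cker function $\varphi_j$; I set $r := r_1 + r_2$ and, for $(z_1,\dots,z_r) \in E^r$, declare $\varphi(z_1,\dots,z_r) := 0$ unless exactly $r_1$ of the entries lie in $E_1$, in which case $\varphi(z_1,\dots,z_r) := \sgn(\sigma)\,\varphi_1(z_{\sigma(1)},\dots,z_{\sigma(r_1)})\,\varphi_2(z_{\sigma(r_1+1)},\dots,z_{\sigma(r)})$ with $\sigma$ any permutation moving the $E_1$-entries (in order) to the front. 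The first tasks are routine verifications: $\varphi$ is well defined, alternating, and not identically zero (evaluate it on a shuffle of a basis of $M_1$ with a basis of $M_2$); $\mathcal{C}$ is stable under scaling by $H^\times$; and the underlying ordinary matroid of either construction is the direct sum of the underlying matroids of $M_1$ and $M_2$, so in particular its circuits are exactly the circuits of $M_1$ together with those of $M_2$, each living entirely in one block.

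\emph{Equivalence of the two definitions.} I would derive the cryptomorphism from the Baker--Bowler correspondence between $H$-circuit sets and Grassmann-Pl\"{u}cker functions, applied to $\varphi$. The supports of the $H$-circuits determined by $\varphi$ are the circuits of its underlying matroid, hence each is contained in $E_1$ or in $E_2$; and along such a support the $H$-coefficients are read off, up to one global scalar, from ratios of values of $\varphi$. Since $\varphi$ is a product of a $\varphi_1$-factor and a $\varphi_2$-factor, and since on a support inside $E_j$ the other factor is a fixed nonzero constant that cancels in every ratio, these coefficients are precisely those of the corresponding circuit of $M_j$. Hence the $H$-circuit set attached to $\varphi$ equals $\mathcal{C}$. (Alternatively one can check the circuit axioms for $\mathcal{C}$ directly and then match the two presentations; the route through Baker--Bowler is shorter.)

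\emph{The direct sum is always weak.} The core is to verify the three-term Grassmann-Pl\"{u}cker relations for $\varphi$ using only the three-term relations for $\varphi_1$ and $\varphi_2$, which hold because $M_1$ and $M_2$ are at least weak. Fix $x_1,\dots,x_{r-1}$ and $y_1,\dots,y_{r+1}$ in $E$ giving such a relation $\sum_k (-1)^k \varphi(x_1,\dots,x_{r-1},y_k)\,\varphi(y_1,\dots,\widehat{y_k},\dots,y_{r+1}) \ni 0$, and let $d$ be the number of the $x_i$ lying in $E_1$. A term is nonzero only if $d + [\,y_k \in E_1\,] = r_1$ and simultaneously the $E_1$-count of $\{y_1,\dots,y_{r+1}\}\setminus\{y_k\}$ equals $r_1$; unless $d \in \{r_1 - 1,\, r_1\}$ every term vanishes and the relation holds trivially, while if $d = r_1$ (resp.\ $d = r_1 - 1$) the surviving indices $k$ are exactly those with $y_k \in E_2$ (resp.\ $y_k \in E_1$). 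In either surviving case, pulling out the (fixed, nonzero) contribution of the frozen block together with the shuffle signs turns the relation into $\pm$ a three-term Grassmann-Pl\"{u}cker relation for $\varphi_2$ (resp.\ for $\varphi_1$), which contains $0$ by hypothesis, and multiplying a nonzero scalar back in preserves this. The step I expect to be the genuine obstacle is the sign bookkeeping: one must check that the shuffle signs built into the definition of $\varphi$, the alternating signs $(-1)^k$, and the reindexing of the surviving terms combine to reproduce exactly the signs occurring in the factor relation. This is a finite but delicate permutation computation, and it is where the definition of $\varphi$ (in particular the chosen sign convention) has to be pinned down carefully.

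\emph{Strong if and only if both strong.} For the implication $\Leftarrow$ one reruns the previous reduction verbatim with the \emph{full} Grassmann-Pl\"{u}cker relations in place of the three-term ones: each full relation for $\varphi$ becomes, up to a nonzero scalar and signs, a full Grassmann-Pl\"{u}cker relation for $\varphi_1$ or for $\varphi_2$, and these hold once $M_1$ and $M_2$ are strong. For $\Rightarrow$, note that $M_1$ is the restriction $(M_1 \oplus M_2)\setminus E_2$ and $M_2 = (M_1 \oplus M_2)\setminus E_1$, both of which are minors; so if $M_1 \oplus M_2$ is strong then Theorem~A forces $M_1$ and $M_2$ to be strong. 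Together with the previous paragraph, this establishes all the assertions of Theorem~B.
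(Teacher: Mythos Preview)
Your treatment of the cryptomorphism and of the ``strong if and only if both strong'' clause is essentially fine (your use of Theorem~A for the $\Rightarrow$ direction is a legitimate shortcut; the paper instead exhibits a failing (SG)-relation directly). The problem is the paragraph \emph{The direct sum is always weak}.

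What you wrote there is not the three-term relation (WG) at all: the expression
\[
\sum_{k}(-1)^k\,\varphi(x_1,\dots,x_{r-1},y_k)\,\varphi(y_1,\dots,\widehat{y_k},\dots,y_{r+1})
\]
with $x_1,\dots,x_{r-1}$ and $y_1,\dots,y_{r+1}$ arbitrary is precisely (SG). Your counting argument then correctly shows that each (SG)-relation for $\varphi$ collapses, after pulling out a nonzero constant, to an (SG)-relation for $\varphi_1$ or $\varphi_2$ (with $r_1+1$ or $r_2+1$ surviving terms, not three). That is the proof that strong $\oplus$ strong is strong --- which you then claim to repeat in the next paragraph --- but it says nothing about weak inputs, because for weak $M_1,M_2$ you only have (WG) available, not (SG).

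The genuine (WG) case has a different shape and does \emph{not} always reduce to a factor relation. Here one fixes $a,b,c,d\in E$ and $\mathbf{y}\in E^{r-2}$. If all four of $a,b,c,d$ lie in the same block, the relation does reduce (up to a global sign and a constant) to (WG) for that factor, and your sign discussion would apply. But if exactly two of $a,b,c,d$ lie in $E_1$ and two in $E_2$, then one of the three terms vanishes and the other two are \emph{equal}; one must verify
\[
\varphi(a,c,\mathbf{y})\,\varphi(b,d,\mathbf{y})=\varphi(a,d,\mathbf{y})\,\varphi(b,c,\mathbf{y})
\]
directly from the product formula for $\varphi$, which is not an instance of (WG) for either factor. (The $3$--$1$ split is the easy case: every term vanishes.) This $2$--$2$ split is the missing piece; without it you have not shown that the direct sum of two weak $H$-matroids is weak.
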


\begin{rmk}
  We note that our definition of direct sum is a natural
  generalization of the direct sum of ordinary matroids.  It also
  meshes nicely with the definition of direct sums for matroids over
  fuzzy rings in \cite{dress1991grassmann}.
\end{rmk}

By appealing to the above results, we define Hopf algebras for
matroids over hyperfields in \S \ref{section: hopf algebra}. Finally,
in \S \ref{section: partial hyperfields}, we explain how our current
work can be thought in views of matroids over \emph{fuzzy rings} as in
Dress-Wenzel theory \cite{dress1991grassmann}, matroids over
\emph{partial hyperfields} \cite{baker2017matroids}, and universal
Tutte characters \cite{dupont2017universal} as well as Tutte
polynomials of Hopf algebras \cite{krajewski2015combinatorial}.\\
\vspace{0.9cm}

\noindent \textbf{Acknowledgments}\\

\noindent The first author would like to thank Laura Anderson for many helpful
comments and conversations as well as much patience and support. The second author would like to thank Boston University for
their support during his visit. The third author gratefully acknowledges the support of a Simons Foundation
Collaboration Grant. 

\section{Preliminaries}
In this section, we recall the definitions of the following for
readers who are not familiar with these key notions:
\begin{itemize}
\item 
Matroids. 
\item 
Hyperfields and matroids over hyperfields. 
\item 
Hopf algebras and matroid-minor Hopf algebras. 
\end{itemize}
In what follows, we will only consider matroids on a finite set unless
otherwise stated. For infinite matroids in the context of the current
paper, we refer readers to \cite{dress1986duality}.

Notice that we let $\mathbb{N}=\mathbb{Z}_{\geq0}$.

\subsection{Matroids}
This section is intended as a brief refresher on the basic notions and
operations in matroid theory. Readers familiar with matroid theory may
skip this section. We refer readers to \cite{oxley2006matroid} and
\cite{welsh2010matroid} for further details and proofs of the facts
from this section.

A matroid is a combinatorial model for the properties of linear
independence in a (finite dimensional) vector space and for the
properties of cycles in combinatorial graphs.  It is well-known that
there are several ``cryptomorphic'' definitions for matroids.  Chief
among these are the notions of \emph{bases} and \emph{circuits}.

Let $E$ be a finite set (the \emph{ground set} of a matroid). A
nonempty collection $\calB\subseteq\mathcal{P}(E)$, where
$\mathcal{P}(E)$ is the power set of $E$, is a set of \emph{bases} of
a matroid when $\calB$ satisfies the \emph{basis exchange axiom},
given below:
\begin{enumerate}
\item For all $X,Y\in\calB$ and all $x\in X\setminus Y$ there is an
  element $y\in Y\setminus X$ such that
  $(X\setminus\{x\})\cup\{y\}\in\calB$.
\end{enumerate}
In the context of (finite dimensional) vector spaces, any pair of
bases $B_1,B_2$ of a given subspace satisfies this property by the
Steinitz Exchange Lemma.  In the context of finite graphs, this is a
corollary of the Tree Exchange Property satisfied by the edge sets of
spanning forests in the graph.

A collection $\calC\subseteq\pow(E)$ is a set of \emph{circuits} of a
matroid on $E$ when $\calC$ satisfies the following three axioms:
\begin{enumerate}
\item (\emph{Nondegeneracy}) $\emptyset\notin\calC$.
\item (\emph{Incomparability}) If $X,Y\in\calC$ and $X\subseteq Y$,
  then $X=Y$.
\item (\emph{Circuit elimination}) For all $X,Y\in\calC$ and all
  $e\in X\cap Y$, there is a $Z\in\calC$ such that
  $Z\subseteq (X\cup Y)\setminus\{e\}$.
\end{enumerate}
In the context of (finite) graphs, circuits are precisely the edge
sets of cycles in the graph. In the context of (finite dimensional)
vector spaces, circuits correspond with minimal dependence relations
on a finite set of vectors.

\begin{rmk}
  There is a natural bijection between sets of circuits of a matroid
  on $E$ and sets of bases of a matroid on $E$. Given a set $\calC$ of
  circuits of a matroid, we define $\calB_{\calC}$ to be the set of
  maximal subsets of $E$ not containing any element of
  $\calC$. Likewise, given a set $\calB$ of bases of a matroid, we
  define $\calC_{\calB}$ to be the set of minimal nonempty subsets of
  $E$ which are not contained in any element of $\calB$. It is a
  standard exercise to show that (1) $\calB_{\calC}$ is a set of bases
  of a matroid, (2) $\calC_{\calB}$ is a set of circuits of a matroid,
  and (3) both $\calB_{\calC_{\calB}}=\calB$ and
  $\calC_{\calB_{\calC}}=\calC$.  In this sense $\calB$ and
  $\calC_{\calB}$ carry the same information as $\calC_{\calB}$ and
  $\calB_{\calC}$ respectively; these are thus said to determine the
  same matroid on $E$ ``cryptomorphically.''
\end{rmk}

\begin{myeg}
  The motivating examples of matroids (hinted at above) are given as
  follows:
  \begin{enumerate}
  \item Let $V$ be a finite dimensional vector space and
    $E\subseteq V$ a spanning set of vectors.  The bases of $V$
    contained in $E$ form the bases of a matroid on $E$, and the
    minimal dependent subsets of $E$ form the circuits of a matroid on
    $E$.  Furthermore, these are the same matroid.
  \item Let $\Gamma$ be a finite, undirected graph with edge set $E$
    (loops and parallel edges are allowed).  The sets of edges of
    spanning forests in $\Gamma$ form the bases of a matroid on $E$,
    and the sets of edges of cycles form the circuits of a matroid on
    $E$.  Furthermore, these are the same matroid (called the
    \emph{graphic} matroid of $\Gamma$).
  \end{enumerate}
\end{myeg}

One can define the notion of isomorphisms of matroids as follows. 

\begin{mydef}\label{def: isomorphism of ordinary matroids}
  Let $M_1$ (resp.~ $M_2$) be a matroid on $E_1$ (resp.~$E_2$) defined
  by a set $\mathcal{B}_1$ (resp.~$\mathcal{B}_2$) of bases. We say
  that $M_1$ is \emph{isomorphic} to $M_2$ if there exists a bijection
  $f:E_1 \to E_2$ such that $f(B) \in \mathcal{B}_2$ if and only if
  $B \in \mathcal{B}_1$. In this case, $f$ is said to be an \emph{isomorphism}.
\end{mydef}

\begin{myeg}
  Let $\Gamma_1$ and $\Gamma_2$ be finite graphs and $M_1$ and $M_2$
  be the corresponding graphic matroids. Every graph isomorphism
  between $\Gamma_1$ and $\Gamma_2$ gives rise to a matroid
  isomorphism between $M_1$ and $M_2$, but the converse need not hold.
\end{myeg}

Recall that given any base $B\in\calB(M)$ and any element
$e\in E\setminus B$, there is a unique circuit (\emph{fundamental
  circuit}) $C_{B,e}$ of $e$ with respect to $B$ such that
$C_{B,e}\subseteq B\cup\{e\}$.

One can construct new matroids from given matroids as follows:

\begin{mydef}[Direct sum of matroids]\label{def: direct sum of matroids}
  Let $M_1$ and $M_2$ be matroids on $E_1$ and $E_2$ given by bases
  $\mathcal{B}_1$ and $\mathcal{B}_2$ respectively. The \emph{direct
    sum} $M_1 \oplus M_2$ is the matroid on $E_1 \sqcup E_2$ given by
  the bases
  $\mathcal{B}=\set{B_1\sqcup B_2}{B_i\in\mathcal{B}_i\text{ for
    }i=1,2}$.
\end{mydef}

\begin{rmk}
  One can easily check that $M_1 \oplus M_2$ is indeed a matroid on
  $E_1 \sqcup E_2$.
\end{rmk}

\begin{mydef}[Dual, Restriction, Deletion, and Contraction]
  Let $M$ be a matroid on a finite set $E_M$ with the set
  $\mathcal{B}_M$ of bases and the set $\mathcal{C}_M$ of
  circuits. Let $S$ be a subset of $E_M$.
  \begin{enumerate}
  \item The \emph{dual} $M^*$ of $M$ is a matroid on $E_M$ given by
    bases
    \[
      \calB_{M^*}:=\set{E_M-B}{B\in\mathcal{B}_M}.
    \]
  \item The \emph{restriction} $M|S$ of $M$ to $S$ is a matroid on $S$
    given by circuits
    \[
      \calC_{M|S} = \set{D\subseteq S}{D\in\calC_M}.
    \]
  \item The \emph{deletion} $M\setminus S$ of $S$ is the matroid
    $M\setminus S := M|(E\setminus S)$.
  \item The \emph{contraction} of $M$ by $S$ is
    $M/S := (M^*\setminus S)^*$.
  \end{enumerate}
\end{mydef}

It is easy to show that $M|S$, $M^*$, and $M/S$ are indeed matroids. A
\emph{minor} of a matroid $M$ is any matroid obtained from $M$ by a
series of deletions and/or contractions.  Basic properties of these
operations are given below:
\begin{pro}
  Let $M$ be a matroid on $E$.  We have the following for all disjoint
  subsets $S$ and $T$ of $E$:
  \begin{enumerate}
  \item $M/\emptyset=M=M\setminus\emptyset$
  \item $(M\setminus S)\setminus T=M\setminus(S\cup T)$
  \item $(M/S)/T=M/(S\cup T)$
  \item $(M\setminus S)/T=(M/T)\setminus S$
  \end{enumerate}
  In particular, the minors of a matroid are in one-to-one
  correspondence with the ordered pairs of disjoint subsets of the
  ground set by deleting the first and contracting the second.
\end{pro}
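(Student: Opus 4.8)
The plan is to dispatch parts (1)--(3) as purely formal consequences of the definitions and to isolate the one substantive point, part (4), namely that deletion and contraction commute. Two elementary lemmas carry the formal part. The \emph{double-dual identity} $(M^{*})^{*}=M$ holds because $\calB_{(M^{*})^{*}}=\set{E-B'}{B'\in\calB_{M^{*}}}=\set{E-(E-B)}{B\in\calB_M}=\calB_M$. The \emph{iterated-restriction identity} $(M|A)|B=M|B$ for $B\subseteq A\subseteq E$ holds because $\calC_{(M|A)|B}=\set{D\subseteq B}{D\subseteq A,\ D\in\calC_M}=\set{D\subseteq B}{D\in\calC_M}=\calC_{M|B}$, the middle step using $B\subseteq A$. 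From the double-dual identity and the definition of contraction one also reads off the duality rules $(M/S)^{*}=M^{*}\setminus S$ (since $(M/S)^{*}=((M^{*}\setminus S)^{*})^{*}$) and $(M\setminus S)^{*}=M^{*}/S$ (apply the definition of contraction to $M^{*}$ and use double-dual).

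With these in hand, (1)--(3) are short. For (1): $M\setminus\emptyset=M|E$, whose circuits are $\set{D\subseteq E}{D\in\calC_M}=\calC_M$, so $M\setminus\emptyset=M$; applying this to $M^{*}$ gives $M/\emptyset=(M^{*}\setminus\emptyset)^{*}=(M^{*})^{*}=M$. For (2): since $(E\setminus S)\setminus T=E\setminus(S\cup T)$, the iterated-restriction identity yields $(M\setminus S)\setminus T=(M|(E\setminus S))|(E\setminus(S\cup T))=M|(E\setminus(S\cup T))=M\setminus(S\cup T)$. For (3): by the duality rules and (2) applied to $M^{*}$,
\[
(M/S)/T=\big((M/S)^{*}\setminus T\big)^{*}=\big((M^{*}\setminus S)\setminus T\big)^{*}=\big(M^{*}\setminus(S\cup T)\big)^{*}=M/(S\cup T).
\]

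Part (4) is the hard part, and notably it does \emph{not} follow from the rest by duality: dualizing $(M\setminus S)/T=(M/T)\setminus S$ through the rules above just reproduces the same identity for $M^{*}$ with the roles of $S$ and $T$ exchanged. I would prove it by passing to an invariant on which both operations act transparently. The cleanest choice is the rank function $r_M\colon\pow(E)\to\bbN$: I would recall (citing \cite{oxley2006matroid}) that a matroid is determined by its rank function, that $r_{M\setminus S}(X)=r_M(X)$ for $X\subseteq E\setminus S$, and that $r_{M/T}(X)=r_M(X\cup T)-r_M(T)$ for $X\subseteq E\setminus T$ --- the last formula obtained from $M/T=(M^{*}\setminus T)^{*}$ together with the standard identity $r_{M^{*}}(Y)=|Y|+r_M(E\setminus Y)-r_M(E)$. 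Then for every $X\subseteq E\setminus(S\cup T)$ one computes $r_{(M\setminus S)/T}(X)=r_{M\setminus S}(X\cup T)-r_{M\setminus S}(T)=r_M(X\cup T)-r_M(T)=r_{M/T}(X)=r_{(M/T)\setminus S}(X)$; since both matroids have ground set $E\setminus(S\cup T)$, they are equal. (Staying inside the circuit language of this section one can instead use the description $\calC_{M/T}=\min\set{C\setminus T}{C\in\calC_M,\ C\setminus T\neq\emptyset}$ and compare the families of circuits contained in $E\setminus(S\cup T)$ on the two sides; this works but the bookkeeping is longer.) The only genuinely external ingredient is thus the behaviour of rank (or of circuits) under contraction; everything else is the bare definitions, so I expect this to be the path of least resistance.

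For the ``in particular'' clause I would induct on the length of a sequence of deletions and contractions presenting a given minor $N$ of $M$: relation (2) amalgamates consecutive deletions, relation (3) amalgamates consecutive contractions, and relation (4) slides every deletion in front of every contraction, so the sequence normalizes to a single deletion of some $S$ followed by a single contraction of some $T$. Moreover $S\cap T=\emptyset$, since an element leaves the ground set the moment it is deleted or contracted and so cannot undergo both. Hence every minor equals $(M\setminus S)/T$ for an ordered pair of disjoint subsets $(S,T)$, with $S\cup T$ recovered from the ground set $E\setminus(S\cup T)$ of $N$; the normalization just described is moreover the normal form for the monoid of minor operations modulo relations (1)--(4), which gives the asserted one-to-one correspondence. (If a minor is regarded as a bare matroid rather than as a matroid together with its presentation, the correspondence is cleanest stated at the level of this monoid, since loops and coloops of $M$ can make distinct pairs $(S,T)$ yield equal matroids.)
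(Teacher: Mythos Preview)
The paper does not supply its own proof of this proposition: it appears in the preliminaries on ordinary matroids (\S2.1), where the authors explicitly defer all proofs to the standard references \cite{oxley2006matroid} and \cite{welsh2010matroid}. Your argument is correct and is essentially the standard textbook route --- parts (1)--(3) are formal consequences of the definitions and duality, and handling part (4) via rank functions is the cleanest direct argument. (The paper does later prove the $H$-matroid analogue, Corollary~\ref{cor: H-matroid minor properties}, by comparing Grassmann--Pl\"ucker representatives for (1)--(3) and $H$-circuits for (4); specializing to $H=\mathbb{K}$ would recover the ordinary case, but that is a different and heavier approach than yours.)

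Your caveat about the ``in particular'' clause is also apt: the literal one-to-one correspondence fails in the presence of loops or coloops (deleting a loop equals contracting it), so the claim should be read either as a surjection from ordered pairs of disjoint subsets onto minors, or with ``minor'' understood to retain its presentation data, exactly as you note.
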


\subsection{Matroids over hyperfields}
In this section, we review basic definitions and properties for
matroids over hyperfields first introduced by Baker and Bowler in
\cite{baker2016matroids}. Let's first recall the definition of a
hyperfield. By a \emph{hyperaddition} on a nonempty set $H$, we mean a
function $+:H \times H \to \mathcal{P}^*(H)$ such that $+(a,b)=+(b,a)$
for all $a,b \in H$, where $\mathcal{P}^*(H)$ is the set of nonempty
subsets of $H$. We will simply write $a+b$ for $+(a,b)$. A
hyperaddition $+$ on $H$ is \emph{associative} if the following
condition holds: for all $a,b,c \in H$,
\begin{equation}\label{associativity}
  a+(b+c)=(a+b)+c. 
\end{equation}
Note that in general for subsets $A$ and $B$ of $H$, we write
$A+B:=\bigcup_{a\in A, b \in B}a+b$ and hence the notation in
\eqref{associativity} makes sense. Also, we will always write a
singleton $\{a\}$ as $a$.

\begin{mydef}\label{def: hypergroup}
  Let $H$ be a nonempty set with an associative hyperaddition $+$. We
  say that $(H,+)$ is a \emph{canonical hypergroup} when the following
  conditions hold:
  \begin{itemize}
  \item $\exists !~0 \in H$ such that $a+0=a$ for all $a \in H$;
    existence of identity.
  \item $\forall~a \in H$, $\exists !~b~(=:-a) \in H$ such that
    $0 \in a+b$; existence of inverses.
  \item $\forall~a,b,c\in H$, if $a \in b+c$, then $c \in a+(-b)$;
    `hyper-subtraction' or reversibility.
  \end{itemize}
\end{mydef}
We will write $a-b$ instead of $a+(-b)$ for brevity of notation.

\begin{mydef}\label{def: hyperring and hyperfields}
  By a \emph{hyperring}, we mean a nonempty set $H$ with a binary
  operation $\cdot$ and hyperaddition $+$ such that $(H,+,0)$ is a
  canonical hypergroup and $(H,\cdot,1)$ is a commutative monoid
  satisfying the following conditions: for all $a,b,c \in H$,
  \[
    a\cdot(b+c)=a\cdot b +a\cdot c, \quad 0\cdot a=0,\quad\text{ and } 1 \neq 0.
  \]
  When $(H-\{0\},\cdot,1)$ is a group, we call $H$ a
  \emph{hyperfield}.
\end{mydef}

\begin{mydef}\label{def: homomorphism of hyperrings}
  Let $H_1$ and $H_2$ be hyperrings. A \emph{homomorphism} of
  hyperrings from $H_1$ to $H_2$ is a function $f:H_1 \to H_2$ such
  that $f$ is a monoid morphism with respect to multiplication
  satisfying the following conditions:
  \[
    f(0)=0~\textrm{ and }~ f(a+b) \subseteq f(a)+f(b),~\forall a,b
    \in H_1.
  \]
\end{mydef}

The following are some typical examples of hyperfields found in the
literature:

\begin{myeg}[$\mathbb{K}$; \emph{Krasner hyperfield}]\label{example: Krasner hyperfield}
  Let $\mathbb{K}:=\{0,1\}$ and impose the usual multiplication
  $0\cdot 0 =0$, $1\cdot 1=1$, and $0\cdot 1=0$. Hyperaddition is
  defined as $0+1=1$, $0+0=0$, and $1+1=\mathbb{K}$. The structure
  $\mathbb{K}$ is the \emph{Krasner hyperfield}.
\end{myeg}

\begin{myeg}[$\mathbb{S}$; \emph{hyperfield of signs}]\label{example: sign hyperfield}
  Let $\mathbb{S}:=\{-1,0,1\}$ and impose multiplication in a usual
  way following the rule of signs; $1\cdot 1=1$, $(-1)\cdot 1=(-1)$,
  $(-1)\cdot (-1)=1$, and $1\cdot 0=(-1)\cdot 0=0\cdot
  0=0$. Hyperaddition also follows the rule of signs as follows:
  \[
    1+1=1,~(-1)+(-1)=(-1),~1+0=1,~(-1)+0=(-1),~0+0=0,~1+(-1)=\mathbb{S}.
  \]
  The structure $\mathbb{S}$ is the \emph{hyperfield of signs}.
\end{myeg}

\begin{myeg}[$\mathbb{P}$; \emph{phase hyperfield}]
  Let $\mathbb{P}:=S^1\cup\{0\}$, where $S^1$ is the unit circle in
  the complex plane. The multiplication on $\mathbb{P}$ is the usual
  multiplication of complex numbers. Hyperaddition is defined by:
  \[
    a+b =\left\{ \begin{array}{ll}
                   \{-a,0,a\} & \textrm{if $a= -b$ ($-b$ as a complex number)}\\
                   \textrm{ the shorter open arc connecting $a$ and
                   $b$}& \textrm{if $a \neq -b$},
                 \end{array} \right.
             \]
  The structure $\mathbb{P}$ is the \emph{phase hyperfield}.
\end{myeg}

\begin{myeg}[$\mathbb{T}$; \emph{tropical hyperfield}] \label{example: Ghyp}
  Let $G$ be a (multiplicative) totally ordered abelian group. Then
  one can enrich the structure of $G$ to define a hyperfield. To be
  precise, let $G_{hyp}:=G\cup \{-\infty\}$ and define multiplication
  via the multiplication of $G$ together with the rule
  $g\cdot(-\infty)=-\infty$ for all $g \in G$. Hyperaddition is
  defined as follows:
\[
  a+b =\left\{ \begin{array}{ll}
                 \max\{a,b\} & \textrm{if $a\neq b$}\\
                 \left[-\infty,a\right]& \textrm{if $a = b$},
               \end{array}
             \right.
           \]
where $\left[-\infty,a\right]:=\{g \in G_{hyp} \mid g\leq a\}$ with
$-\infty$ the smallest element.  Then one can easily see that $G_{hyp}$
is a hyperfield.  When $G=\mathbb{R}$, the set of real numbers
(considered as a totally ordered abelian group with respect to the
usual addition), we let $\mathbb{T}:=\mathbb{R}_{hyp}$.  The structure
$\mathbb{T}$ is the \emph{tropical hyperfield}.
\end{myeg}

\begin{rmk}
  One easily observes that for any hyperfield $H$, there exists a
  unique homomorphism $\varphi:H \to \mathbb{K}$ sending every nonzero
  element to $1$ and $0$ to $0$. In other words, $\mathbb{K}$ is the
  final object in the category of hyperfields.
\end{rmk}

In what follows, let $(H,\boxplus,\odot)$ be a hyperfield,
$H^\times=H-\{0_H\}$, $r$ a positive integer, $[r]=\{1,...,r\}$,
$\mathbf{x}$ an element of $E^r$ such that $\mathbf{x}(i) \in E$ is
the $i$th coordinate of $\mathbf{x}$ unless otherwise stated. Now we
recall the two notions (weak and strong) of matroids over hyperfields
introduced by Baker and Bowler.  These notions are given
cryptomorphically by structures analogous to the bases and circuits of
ordinary matroids.  Their definition simultaneously generalizes
several existing theories of ``matroids with extra structure,''
evidenced by the following examples:

\begin{myeg}
  Matroids over the following hyperfields have been studied in the
  past:
  \begin{itemize}
  \item A (strong or weak) matroid over a field $K$ is a linear
    subspace.
  \item A (strong or weak) matroid over the Krasner hyperfield
    $\mathbb{K}$ is an ordinary matroid.
  \item A (strong or weak) matroid over the hyperfield of signs
    $\mathbb{S}$ is an oriented matroid.
  \item A (strong or weak) matroid over the tropical hyperfield
    $\mathbb{T}$ is a valuated matroid.
  \end{itemize}
\end{myeg}

We first recall the generalization of bases to the setting of matroids
over hyperfields.  This is done via \emph{Grassmann-Pl\"ucker
  functions}.

\begin{mydef}
  Let $H$ be a hyperfield, $E$ a finite set, $r$ a nonnegative
  integer, and $\Sigma_r$ the symmetric group on $r$ letters with a
  canonical action on $E^r$ (acting on indices).
  \begin{enumerate}
  \item A function $\varphi:E^r\to H$ is a \emph{nontrivial
      $H$-alternating function} when:
    \begin{enumerate}
    \item[(G1)] The function $\varphi$ is not identically zero.
    \item[(G2)] For all $\mathbf{x}\in E^r$ and all
      $\sigma\in\Sigma_r$ we have
      $\varphi(\sigma\cdot\mathbf{x})=\sgn(\sigma)\varphi(\mathbf{x})$.
    \item[(G3)] If $\mathbf{x}\in E^r$ has
      $\mathbf{x}(i)=\mathbf{x}(j)$ for some $i<j$, then
      $\varphi(\mathbf{x})=0_H$.
    \end{enumerate}

  \item A nontrivial $H$-alternating function $\varphi:E^r \to H$ is a
    \emph{weak-type Grassmann-Pl\"ucker function} over $H$ when:
    \begin{enumerate}
    \item[(WG)] For all $a,b,c,d\in E$ and all $\mathbf{x}\in E^{r-2}$
      we have
      \[
        0_H\in\varphi(a,b,\mathbf{x})\varphi(c,d,\mathbf{x})-\varphi(a,c,\mathbf{x})\varphi(b,d,\mathbf{x})+\varphi(b,c,\mathbf{x})\varphi(a,d,\mathbf{x}).
        \]
    \end{enumerate}

  \item A nontrivial $H$-alternating function $\varphi:E^r \to H$ is a
    \emph{strong-type Grassmann-Pl\"ucker function} over $H$ when:
    \begin{enumerate}
    \item[(SG)] For all $\mathbf{x}\in E^{r+1}$ and all
      $\mathbf{y}\in E^{r-1}$ we have
      \[
        0_H\in\sum_{k=1}^{r+1}(-1)^k\varphi(\mathbf{x}|_{[r+1]\setminus\{k\}})\varphi(\mathbf{x}(k),\mathbf{y}).
      \]
    \end{enumerate}
  \item The \emph{rank} of a Grassmann-Pl\"ucker function
    $\varphi:E^r\to H$ is $r$.
  \item Two Grassmann-Pl\"ucker functions $\varphi,\psi:E^r\to H$ are
    \emph{equivalent} when there is an element $a\in H^\times$ with
    $\psi=a\odot\varphi$.
  \end{enumerate}
\end{mydef}
A \emph{matroid over $H$} is an $H^\times$-equivalence class $[\varphi]$ of
a Grassmann-Pl\"ucker function $\varphi$.

Before presenting circuits of matroids over hyperfields, we need the
following technical definition.

\begin{mydef}\label{def: modular pair}
  Let $\mathcal{S}$ be a collection of inclusion-incomparable subsets
  of a set $E$.  A \emph{modular pair} in $\mathcal{S}$ is a pair of
  distinct elements $X,Y\in\mathcal{S}$ such that for all
  $A,B\in\mathcal{S}$, if $A\cup B\subseteq X\cup Y$, then
  $A\cup B=X\cup Y$.
\end{mydef}

Having Definition \ref{def: modular pair}, we can now give definitions
of collections of circuits for matroids over hyperfields. In what
follows, we will simply write $\sum$ instead of $\boxplus$ if the
context is clear.

\begin{mydef}
  Let $E$ be a finite set, $(H,\boxplus,\odot)$ a hyperfield, $H^E$
  the set of functions from $E$ to $H$, and for any $X \in H^E$, we
  define $\supp(X):=\set{a \in E}{X(a) \neq 0_H}$.
  \begin{enumerate}
  \item A collection $\calC\subseteq H^E$ is a family of
    \emph{pre-circuits} over $H$ when it satisfies the following
    axioms:
    \begin{enumerate}
    \item[(C1)] $\mathbf{0}\notin\calC$
    \item[(C2)] $H^\times\odot \calC=\calC$
    \item[(C3)] For all $X,Y\in\calC$, if $\supp(X)\subseteq\supp(Y)$,
      then $Y= a \odot X$ for some $a \in H^\times$.
    \end{enumerate}

  \item A pre-circuit set $\calC$ over $H$ is a \emph{weak-type
      circuit set} when it satisfies the following additional axiom:
    \begin{enumerate}
    \item[(WC)] $\forall~X,Y$ in $\calC$ such that
      $\{\supp (X),\supp (Y)\}$ forms a modular pair in
      $\supp(\calC):=\set{\supp(X)}{X \in \calC}$ and for all
      $e\in\supp (X)\cap\supp (Y)$, there is a $Z\in\calC$ such that
   \[
   Z(e)=0_H \textrm{ and } Z\in X(e)\odot Y-Y(e)\odot X,
   \]   
   i.e., for all $f \in E$,
   $Z(f) \in X(e)\odot Y(f) - Y(e)\odot X(f)$.
    \end{enumerate}

  \item A pre-circuit set $\calC$ over $H$ is a \emph{strong-type
      circuit set} when it satisfies the following additional axioms:
    \begin{enumerate}
    \item[(SC1)] The set $\supp(\calC)$ is the set of circuits of an
      ordinary matroid $M_{\calC}$.
    \item[(SC2)] For all bases $B\in\calB_{\calC}$ and all $X\in\calC$
      we have
    \[
    X\in \sum_{e\in E\setminus B}X(e)\odot Y_{B,e},
    \]  
    where $Y_{B,e}$ is the (unique) element of $\calC$ with
    $Y_{B,e}(e)=1$ and $\supp (Y_{B,e})$ is the fundamental circuit of
    $e$ with respect to $B$.
    \end{enumerate}
  \end{enumerate}
\end{mydef}

\begin{rmk}
  The definition of strong-type circuit sets given above is not the
  original definition; rather this is equivalent to the original (much
  less intuitive) definition by \cite[Theorem 3.8, Remark
  3.9]{baker2016matroids}.  For our purposes, we shall use the
  definition given above.
\end{rmk}

 The following result is proved in \cite{baker2016matroids}:
\begin{pro}
  Let $H$ be a hyperfield.  The $H^\times$-orbits of
  Grassmann-Pl\"ucker functions over $H$ are in natural one-to-one
  correspondence with the $H$-circuits of a matroid, preserving both
  ranks and types (weak and strong).
\end{pro}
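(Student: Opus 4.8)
The plan is to construct explicit maps in both directions --- a Grassmann--Pl\"ucker function determines a circuit set, and a circuit set determines a Grassmann--Pl\"ucker function --- then to check that they are mutually inverse, that rank is visibly preserved, and that each of the two ``type'' conditions on one side matches the corresponding condition on the other.

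\textbf{From GP functions to circuits.} Given $\varphi\colon E^r\to H$, the $r$-subsets $B\subseteq E$ with $\varphi(B)\neq 0_H$ (for one, hence every, ordering of $B$, by (G2)) form the bases of an ordinary matroid $M_\varphi$: nonemptiness is (G1), and that the basis-exchange axiom holds is a standard consequence of (G1)--(G3) and (WG) --- equivalently, $M_\varphi$ is the pushforward of $[\varphi]$ along the terminal morphism $H\to\mathbb{K}$. For each circuit $C=\{c_0,\dots,c_m\}$ of $M_\varphi$, pick $\mathbf{y}\in E^{r-m}$ so that the entries of $\mathbf{y}$ together with $C\setminus\{c_i\}$ form a basis, and set $X_C(c_i)=(-1)^i\varphi(c_0,\dots,\widehat{c_i},\dots,c_m,\mathbf{y})$ with $X_C(e)=0_H$ for $e\notin C$. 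Using (G2), (G3) and the Grassmann--Pl\"ucker relations one checks that the $H^\times$-orbit of $X_C$ is independent of the ordering of $C$ and of the choice of $\mathbf{y}$; set $\calC_\varphi:=H^\times\odot\{X_C\}$. Axioms (C1)--(C3) are immediate, since distinct circuits of $M_\varphi$ are incomparable and $X_C$ has support exactly $C$.

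\textbf{From circuits to GP functions.} Conversely, let $\calC$ be an $H$-circuit set and let $M$ be its underlying ordinary matroid (for a strong-type set this is (SC1); for a weak-type set one first checks that $\supp(\calC)$ satisfies the ordinary circuit axioms, using (WC)). Fix a base $B_0$. For $e\notin B_0$, let $Y_{B_0,e}\in\calC$ be the circuit vector with support the fundamental circuit $C_{B_0,e}$ normalized by $Y_{B_0,e}(e)=1$. Put $\varphi(B_0):=1$ on a fixed ordering and propagate to every base $B$ by the rule that, for a single-element exchange $B'=(B\setminus\{b\})\cup\{e\}$, one sets $\varphi(B'):=-Y_{B,e}(b)\odot\varphi(B)$, extending to all orderings via (G2). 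The crux here is that this is \emph{well defined}, i.e.\ independent of the chosen sequence of exchanges from $B_0$ to $B$: any two such sequences are related by a chain of elementary moves, and invariance of the propagated value under each move reduces to an instance of the circuit relations --- obtained from (C3) together with circuit elimination across modular pairs (axiom (WC)), or in the strong case directly from (SC2). This is the hyperfield analogue of reconstructing a chirotope from a circuit signature. Granting well-definedness, $\varphi_\calC$ satisfies (G1)--(G3) routinely.

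\textbf{Inverseness, ranks, and types.} That the two constructions are mutually inverse up to $H^\times$-orbit is checked by evaluating each against a base and its fundamental circuits: starting from $\varphi$, the vector $X_{C_{B,e}}$ recovered from $\varphi_{\calC_\varphi}$ has the same coordinates as the normalization of $Y_{B,e}$ built directly from $\varphi$; and starting from $\calC$, the expansion $X\in\sum_{e\in E\setminus B}X(e)\odot Y_{B,e}$ identifies $\calC_{\varphi_\calC}$ with $\calC$. Rank is preserved by construction, a rank-$r$ function giving the rank-$r$ matroid $M_\varphi$ and conversely. The substantive point is preservation of type: one must show $\varphi$ weak $\Rightarrow$ $\calC_\varphi$ weak and $\varphi$ strong $\Rightarrow$ $\calC_\varphi$ strong, together with the converses. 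For (WG) $\Rightarrow$ (WC): given $X,Y\in\calC_\varphi$ whose supports form a modular pair and a common $e$ in both supports, choose a completion $\mathbf{x}\in E^{r-2}$ adapted to $X\cup Y$, apply (WG) to $\varphi(e,\,\cdot\,,\mathbf{x})$ at the relevant pairs, and use reversibility in the canonical hypergroup $H$ to extract $Z\in X(e)\odot Y-Y(e)\odot X$ with $Z(e)=0_H$ and support in $M_\varphi$; (SG) $\Rightarrow$ (SC2) comes from expanding $\varphi$ along the base $B$ via the identity (SG). The reverse implications follow by substituting circuit expansions of vectors into the defining sums (WG)/(SG). I expect this last block to be the main obstacle: it is precisely where the multivaluedness of hyperaddition intervenes, so the classical determinantal identities must be replaced by containments and reversibility invoked repeatedly, all while carrying the weak/strong distinction in parallel; everything before it is orbit-bookkeeping once the well-definedness cocycle for the second construction is in hand.
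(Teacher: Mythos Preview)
The paper does not prove this proposition; it is quoted from \cite{baker2016matroids}. The paper only describes the forward map (Grassmann--Pl\"ucker function to circuits) via the fundamental-circuit formula \eqref{GPF to circuits}, which agrees with your first construction, and then explicitly declines to describe the reverse map, saying it ``requires the additional notion of dual pairs'' and referring the reader to Baker--Bowler.

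Your reverse construction is therefore a genuinely different route from the one the paper points to. Baker--Bowler do \emph{not} build $\varphi$ by propagating values along the basis-exchange graph; they pass through the cryptomorphism with \emph{dual pairs} $(\calC,\calC^*)$, i.e.\ a circuit set together with a cocircuit set satisfying the appropriate orthogonality. That framework is what lets them handle the weak/strong dichotomy uniformly and is where the actual work of type-preservation lives.

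There is a real gap in your proposal at exactly the step you flag with ``Granting well-definedness.'' For the propagation approach to succeed you must show that the product of the exchange factors around every cycle in the basis graph is $1_H$, an \emph{equality} in $H$, not merely a containment. Over fields or the sign hyperfield this is the classical chirotope argument, but over a general hyperfield the circuit axioms you have available are asymmetric in a way that matters: (WC) only produces an eliminating $Z$ lying in the \emph{set} $X(e)\odot Y - Y(e)\odot X$, and only for modular pairs. Turning such containments into the exact multiplicative identities needed for the cocycle to vanish is not automatic, and in the weak case there is no reason the relevant basis-graph cycles all reduce to modular-pair eliminations. This is precisely why Baker--Bowler go through dual pairs rather than through the basis graph. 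Your sketch of the type-preservation implications has the same issue: you indicate which axioms to apply but do not show how the multivalued sums collapse to the required statements, and you yourself identify this as ``the main obstacle.'' As written, the proposal is an outline of the classical (single-valued) argument with the hyperfield difficulties acknowledged but not resolved.
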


The correspondence is described as follows:\\
Given a Grassmann-Pl\"ucker function $\varphi$ over $H$, one first
shows that the collection of subsets $B\subseteq E$ for which an
ordering $\mathbf{B}$ has $\varphi(\mathbf{B})\neq0_H$ forms a set of
bases for an ordinary matroid $M_\varphi$.  Next, one can define a set
of $H$-circuits by defining for all ordered bases $\mathbf{B}$ of
$M_\varphi$ and all $e\in E\setminus B$ a function $X=X_{\mathbf{B},e}$
supported on the fundamental circuit for $e$ by $B$ via the equality
\begin{equation}\label{GPF to circuits}
  X(\mathbf{B}(i))X(e)^{-1} =
  (-1)^i\varphi(e,\mathbf{B}|_{[r]\setminus\{i\}})\varphi(\mathbf{B})^{-1}
\end{equation}
This equality uniquely determines $X:E\to H$, up to the multiplicative
action of $H^\times$.  The collection of all such $X$ is a collection
of $H$-circuits of the same type as $\varphi$.

Constructing a Grassmann-Pl\"ucker function from circuits is more
difficult to describe, and requires the additional notion of dual
pairs.  An explicit description of this construction is unnecessary
for our purposes; the interested reader is referred to
\cite{baker2016matroids}.

We now describe the duality operation for matroids over hyperfields in
terms of Grassmann-Pl\"ucker functions and subsequently in terms of
circuits.  It should be noted that the duality described in
\cite{baker2016matroids} incorporates a notion of conjugation
generalizing the complex conjugation.  This changes the duality
operation, but the change is equally well described by another
operation (called ``pushforward through a morphism'') as noted in a
footnote in \cite[\S 6]{baker2016matroids}. Our treatment will also
assume that the conjugation is trivial.

Fix a total ordering $\leq$ of $E$.  A \emph{dual} of a
Grassmann-Pl\"ucker function $\varphi$ over $H$ is defined by the
equation
$\varphi^*(\mathbf{B}):=\sgn_\leq(\mathbf{B},\mathbf{E\setminus
  B})\varphi(\mathbf{E\setminus B})$ for all cobases $B$ of the
underlying matroid of $\varphi$, using the convention that
$\mathbf{S}$ denotes the ordered tuple with coordinates the elements
of $S$ arranged according to our fixed total ordering on $E$ and
$\sgn_\leq(\mathbf{B},\mathbf{E\setminus B})$ denotes the sign of the
permutation given by the word $(\mathbf{B},\mathbf{E\setminus B})$
with respect to the ordering $\leq$.  The definition can be uniquely
extended to the set $E^{\#E-r}$ by alternation and the degeneracy
conditions for Grassmann-Pl\"ucker functions over $H$.  It is
relatively easy to see that if $\varphi$ is a Grassmann-Pl\"ucker
function, then $\varphi^*$ is a Grassmann-Pl\"ucker function of the
same type.  Notice that this duality is well-defined up to the chosen
ordering; a different ordering will induce a Grassmann-Pl\"ucker
function which is multiplied by the sign of the permutation used to
translate between the two orderings.  In particular, this notion of
duality is constant on the level of $H^\times$-orbits of
Grassmann-Pl\"ucker functions, and thus sends an $H$-matroid $M$ to an
$H$-matroid $M^*$ of the same type despite the fact that there is no
canonical dual to the original Grassmann-Pl\"ucker function.
% \footnote{Notice that if $1_H=-1_H$, then this correspondence
%  is in fact canonical.}

The dual of a circuit set requires some more care to define; it is
here that the contrast between weak and strong $H$-matroids is most
stark.

\begin{mydef}
  Let $H$ be a hyperfield and $E$ a finite set.
  \begin{enumerate}
  \item The \emph{dot product} of two functions $X,Y:E\to H$ is
    the following subset of $H$: 
    \[
  X\cdot Y := \sum_{e\in E}X(e)Y(e).
    \]
 
  \item Two functions $X$ and $Y$ are \emph{strong orthogonal},
    denoted $X\perp_s Y$, when 
    \[
    0_H\in X\cdot Y.
    \]
  \item Two functions $X$ and $Y$ are \emph{weak orthogonal}, denoted
    $X\perp_wY$, when either $X\perp_sY$ or the following condition
    holds:
   \[ 
    \#(\supp(X)\cap\supp(Y))>3.
    \]
  \item Let $\calC$ be a set of strong $H$-circuits on $E$, we define
    the following subset of $H^E$:
    \[
      \calC^{\perp_s}
      := \set{X:E\to H}{X\perp_sY\text{ for all }Y\in\calC}.
    \]
  \item Let $\calC$ be a set of weak $H$-circuits, we define the
    following subset of $H^E$:
    \[
      \calC^{\perp_w}
      := \set{X:E\to H}{X\perp_wY\text{ for all }Y\in\calC}.
    \]
  \end{enumerate}
\end{mydef}
For ease of notation the symbol $\perp$ is to be understood in context
as either $\perp_s$ or $\perp_w$.

\begin{mydef}
  Let $H$ be a hyperfield and $M$ be a weak (resp.~strong) $H$-matroid
  with the set $\calC$ of weak type (resp.~strong type)
  $H$-circuits. The \emph{cocircuits} of $\calC$, denoted by
  $\calC^*$, are the elements of the perpendicular set
  $\calC^{\perp_w}$ (resp.~$\calC^{\perp_s}$) with minimal support.
\end{mydef}

\begin{rmk}
  In \cite[\S 6.6]{baker2016matroids}, Baker and Bowler show that this
  determines an $H$-matroid with the properties
  $\supp(\calC^*)=(\supp(\calC))^*$ and $\calC^{**}=\calC$; in other
  words, the underlying matroid of the dual is the dual of the
  underlying matroid and the double dual is identical to the original
  $H$-matroid.
\end{rmk}

\begin{rmk}
  From an algebraic geometric view point, Baker and Bowler's
  definition of matroids over hyperfields can be considered as points
  of a Grassmannian over a hyperfield $H$. Motivated by this
  observation, in \cite{jun2017geometry}, the second author proves that
  certain topological spaces (the underlying spaces of a scheme,
  Berkovich analytificaiton of schemes, real schemes) are homeomorphic
  to sets of rational points of a scheme over a hyperfield. Also,
  recently L.~Anderson and J.~Davis defined and investigated
  hyperfield Grassmannians in connection to the MacPhersonian (from
  oriented matroid theory) in \cite{davis2017hyperfield}.
\end{rmk}

\subsection{Matroid-Minor Hopf algebras}
In this subsection, we recall the definition of matroid-minor Hopf
algebras. First we briefly recall the definition of Hopf algebras;
interested readers are referred to \cite{dascalescu2000hopf} for more
details.

\begin{mydef}\label{def: hopfalgebra}
  Let $k$ be a field. A commutative $k$-algebra $A$ is a \emph{Hopf algebra} if
  $A$ is equipped with maps
  \begin{enumerate}
  \item (Comultiplication) $\Delta: A \to A\otimes_kA$,
  \item (Counit) $\varepsilon:A \to k$,
  \item (Antipode) $S:A \to A$
  \end{enumerate}
  such that the following diagrams commute:
  \[
    \begin{tikzcd}[column sep=large]
      A\otimes_kA \arrow{r}{\Delta \otimes \id}
      &A \otimes_kA\otimes_kA \\
      A \arrow{r}{\Delta} \arrow{u}{\Delta} & A\otimes_kA
      \arrow{u}{\id \otimes \Delta},
    \end{tikzcd} \quad
    \begin{tikzcd}[column sep=large]
      A\otimes_k A\arrow{r}{\varepsilon \otimes \id}
      & k\otimes_k A \\
      A \arrow{r}{\id} \arrow{u}{\Delta} &A \arrow{u}{\simeq},
    \end{tikzcd} \quad
    \begin{tikzcd}[column sep=large]
      A\otimes_kA \arrow{r}{\mu\circ(S\otimes \id)}
      &A \\
      A \arrow{u}{\Delta}\arrow{r}{\varepsilon} &k \arrow{u}{i},
    \end{tikzcd}
  \]
  where $\mu:A\otimes_kA \to A$ is the multiplication of $A$. If $A$
  is only equipped with $\Delta$ and $\varepsilon$ satisfying the
  first two commutative diagrams, then $A$ is a \emph{bialgebra}.
\end{mydef}

\begin{mydef}
  Let $(A,\mu,\Delta,\eta,\varepsilon)$ be a bialgebra over a field
  $k$.
  \begin{enumerate}
  \item $A$ is \emph{graded} if there is a grading
    $A=\bigoplus_{i \in \mathbb{N}} A_i$ which is compatible with the
    bialgebra structure of $A$, i.e., $\mu$, $\Delta$, $\eta$, and
    $\varepsilon$ are graded $k$-linear maps.
  \item $A$ is \emph{connected} if $A$ is graded and
    $A_0=k$.
  \end{enumerate}
\end{mydef}

\begin{mydef}
  Let $A_1$ and $A_2$ be Hopf algebras over a field $k$. A
  \emph{homomorphism} of Hopf algebras is a $k$-bialgebra map
  $\varphi:A_1 \to A_2$ which preserves the antipodes, i.e.,
  $S_{A_1}\varphi=\varphi S_{A_2}$.
\end{mydef}

The following theorem shows that indeed there is no difference between
bialgebra maps and Hopf algebra maps.

\begin{mythm}\cite[Proposition 4.2.5.]{dascalescu2000hopf}
  Let $A_1$ and $A_2$ be Hopf algebras over a field $k$. Let
  $\varphi:A_1 \to A_2$ be a morphism of $k$-bialgebras. Then
  $\varphi$ is indeed a homomorphism of Hopf algebras.
\end{mythm}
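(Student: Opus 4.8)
The plan is to realize both $\varphi\circ S_{A_1}$ and $S_{A_2}\circ\varphi$ as one-sided inverses of $\varphi$ inside a suitable convolution monoid, and then invoke the uniqueness of inverses.

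First I would recall the \emph{convolution product}: for a coalgebra $(C,\Delta_C,\varepsilon_C)$ and an algebra $(A,\mu_A,\eta_A)$ over $k$, the space $\Hom_k(C,A)$ becomes an associative unital $k$-algebra under
\[
  f\star g := \mu_A\circ(f\otimes g)\circ\Delta_C,
\]
with unit element $\eta_A\circ\varepsilon_C$. Applying this with $C=A_1$ (viewed as a coalgebra) and $A=A_2$ (viewed as an algebra), the set $\Hom_k(A_1,A_2)$ is a monoid under $\star$ with identity $u:=\eta_{A_2}\circ\varepsilon_{A_1}$, and the three maps of interest — $\varphi$, $\varphi\circ S_{A_1}$, and $S_{A_2}\circ\varphi$ — all live in this monoid. (Recall also that $S_{A_i}$ is, by definition, a two-sided $\star$-inverse of $\id$ in $\Hom_k(A_i,A_i)$, i.e. $\mu_{A_i}\circ(\id\otimes S_{A_i})\circ\Delta_{A_i}=\eta_{A_i}\circ\varepsilon_{A_i}=\mu_{A_i}\circ(S_{A_i}\otimes\id)\circ\Delta_{A_i}$.)

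Next I would compute $\varphi\star(\varphi\circ S_{A_1})$. Using that $\varphi$ is a coalgebra morphism, $\Delta_{A_2}\circ\varphi=(\varphi\otimes\varphi)\circ\Delta_{A_1}$, that $\varphi$ is an algebra morphism, $\varphi\circ\mu_{A_1}=\mu_{A_2}\circ(\varphi\otimes\varphi)$ and $\varphi\circ\eta_{A_1}=\eta_{A_2}$, together with the antipode axiom for $A_1$, a short diagram chase gives
\[
  \varphi\star(\varphi\circ S_{A_1})
   = \varphi\circ\mu_{A_1}\circ(\id\otimes S_{A_1})\circ\Delta_{A_1}
   = \varphi\circ\eta_{A_1}\circ\varepsilon_{A_1}
   = \eta_{A_2}\circ\varepsilon_{A_1}
   = u,
\]
so $\varphi\circ S_{A_1}$ is a right $\star$-inverse of $\varphi$. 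Symmetrically, using $\varepsilon_{A_2}\circ\varphi=\varepsilon_{A_1}$ and the antipode axiom for $A_2$,
\[
  (S_{A_2}\circ\varphi)\star\varphi
   = \mu_{A_2}\circ(S_{A_2}\otimes\id)\circ\Delta_{A_2}\circ\varphi
   = \eta_{A_2}\circ\varepsilon_{A_2}\circ\varphi
   = \eta_{A_2}\circ\varepsilon_{A_1}
   = u,
\]
so $S_{A_2}\circ\varphi$ is a left $\star$-inverse of $\varphi$.

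Finally, in any monoid an element admitting both a left inverse $\ell$ and a right inverse $r$ satisfies $\ell=\ell\star u=\ell\star(\varphi\star r)=(\ell\star\varphi)\star r=u\star r=r$; taking $\ell=S_{A_2}\circ\varphi$ and $r=\varphi\circ S_{A_1}$ yields $S_{A_2}\circ\varphi=\varphi\circ S_{A_1}$, which is precisely the assertion that $\varphi$ intertwines the antipodes. I expect the only point needing genuine care is the bookkeeping: checking that $(\Hom_k(A_1,A_2),\star)$ really is associative with unit $u$ (a standard but slightly fiddly consequence of coassociativity and associativity), and keeping all compositions in the correct order when translating the bialgebra-morphism axioms into the convolution setting; everything else is formal.
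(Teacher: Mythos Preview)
Your argument is correct and is exactly the standard convolution-inverse proof. The paper does not supply its own proof of this statement at all---it simply cites \cite[Proposition~4.2.5]{dascalescu2000hopf}---and the proof in that reference is precisely the one you have written: realize $\varphi\circ S_{A_1}$ and $S_{A_2}\circ\varphi$ as right and left $\star$-inverses of $\varphi$ in $\Hom_k(A_1,A_2)$ and invoke uniqueness of inverses in a monoid.
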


We also introduce the following notation:

\begin{mydef}
  Let $A$ be a Hopf algebra over a field $k$ and
  $ i \in \mathbb{Z}_{\geq 1}$.
  \begin{enumerate}
  \item (Iterated multiplication): $\mu^i:A^{\otimes (i+1)} \to A$ is
    defined inductively as
    \[
      \mu^i:=\mu \circ (\id \otimes \mu^{(i-1)}).
    \]
  \item (Iterated comultiplication):
    $\Delta^i:A \to A^{\otimes (i+1)}$ is defined inductively as
    \[
      \Delta^i:=(\id \otimes \Delta^{(i-1)})\circ\Delta .
    \]
  \end{enumerate}
\end{mydef}

Now, let's recall the definition of matroid-minor Hopf algebras, first
introduced by W.~R.~Schmitt in \cite{schmitt1994incidence}. Let
$\mathcal{M}$ be a collection of matroids which is closed under taking
minors and direct sums. Let $\Miso$ be the set of isomorphism classes
of matroids in $\mathcal{M}$. For a matroid $M$ in $\mathcal{M}$, we
write $[M]$ for the isomorphism class of $M$ in $\Miso$. One easily
see that $\Miso$ can be enriched to a commutative monoid with the
direct sum:
\[
  [M_1]\cdot[M_2]:=[M_1\oplus M_2]
\]
and the identity $[\emptyset]$, the equivalence class of the empty
matroid (considered as the unique matroid associated to the empty
ground set). Let $A$ be the monoid algebra $k[\Miso]$ over a field
$k$.

For any matroid $M$, let $E_M$ denote the ground set of $M$.  Consider
the following maps:
\begin{itemize}
\item (Comultiplication)
  \[
    \Delta: k[\Miso] \to k[\Miso]\otimes_kk[\Miso],\quad [M] \mapsto
    \sum_{S\subseteq E_M} [M|_S] \otimes [M/S].
  \]
\item (Counit)
  \[
    \varepsilon:k[\Miso] \to k, \quad [M]\mapsto
    \left\{ \begin{array}{ll}
              1 & \textrm{if $E_M=\emptyset$}\\
              0& \textrm{if $E_M\neq \emptyset$},
            \end{array}
          \right. 
        \]
\end{itemize}

Under the above maps, $k[\Miso]$ becomes a connected
bialgebra; $k[\Miso]$ is graded by cardinalities of
ground sets. It follows from the result of M.~Takeuchi
\cite{takeuchi1971free} that $k[\Miso]$ has a unique Hopf algebra
structure with a unique antipode $S$ given by:
\begin{equation}
  S=\sum_{i \in \mathbb{N}}(-1)^i\mu^{i-1}\circ \pi^{\otimes i}\circ \Delta^{i-1},
\end{equation}
where $\mu^{-1}$ is a canonical map from $k$ to $k[\Miso]$,
$\Delta^{-1}:=\varepsilon$, and $\pi:k[\Miso] \to k[\Miso]$ is the
projection map defined by
\[
  \pi|_{A_n} \left\{ \begin{array}{ll}
                          \id & \textrm{if $n \geq 1$}\\
                          0& \textrm{if $n=0$},
                        \end{array} \right.
\]
and extended linearly to $k[\Miso]$, where $A_n$ is the $n$th graded piece of $A$. 

\section{Minors and sums of matroids over hyperfields}\label{section: minor}

In this section we explicitly write out the constructions of
restriction, deletion, contraction, and direct sums for matroids over
hyperfields.  We do this cryptomorphically via both circuits and
Grassmann-Pl\"ucker functions in both the weak and strong cases.
Primarily, we define the restriction, and subsequently use our
characterization to derive the other cryptomorphic descriptions of
minors.  It should be noted that formulas for deletion and contraction
in the case of phirotopes are given in \cite{anderson2012foundations}
for phased matroids and in \cite{baker2016matroids} for general
Grassmann-Pl\"ucker function without proof.\footnote{There is an error
  in \cite{anderson2012foundations}; in particular, the authors make
  the false assumption that the Axiom (WG) implies Axiom (SG).  Thus
  they fail to handle the weak case separately from the strong case.
  While \cite{baker2016matroids} fixes this issue, the authors merely
  state this result without presenting details.} For completeness, we
give full proofs and expand the previous work by giving formulas for
the circuits of these objects as well.

\subsection{Circuits of $H$-Matroid Restrictions}
Let $H$ be a hyperfield, $E$ be a finite set, $\calC$ be a set of
(either weak-type or strong-type) $H$-circuits on $E$, and
$S\subseteq E$. Recall that $H^S$ is the set of functions from $S$ to
$H$. We define the following notation:
\begin{equation}\label{def: H-circuit restriction}
  \calC|S := \set{X|_{S}\in H^S}{X\in\calC\text{ and }\supp (X)\subseteq
    S}.
\end{equation}
We have the following:

\begin{pro}\label{circuit-restriction}
  Let $\calC$ be a set of weak-type (resp.~strong-type) $H$-circuits of a
  matroid $M$ over $H$ on a ground set $E$.
  \begin{enumerate}
  \item $\forall~S\subseteq E$, the set $\calC|S$ is a set of
    weak-type (resp.~strong-type) $H$-circuits on $S$.
  \item The underlying matroid of the $H$-matroid $M$ determined by
    $\calC|S$ is precisely the restriction of the underlying matroid
    $\supp(M)|S$. In other words, the restriction commutes with the
    \emph{push-forward operation} to the Krasner hyperfield
    $\mathbb{K}$.
  \end{enumerate}
\end{pro}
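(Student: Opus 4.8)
The plan is to verify the defining axioms of weak-type (resp.\ strong-type) $H$-circuit sets directly for $\calC|S$, and then identify the underlying matroid by comparing supports. For part (1) I would first dispose of the easy axioms (C1)--(C3): axiom (C1) holds since $\mathbf{0}\notin\calC$ and restriction of a nonzero function supported in $S$ is nonzero; (C2) holds because $H^\times\odot\calC=\calC$ and scaling does not change supports, so $H^\times\odot(\calC|S)=\calC|S$; (C3) holds because if $X|_S,Y|_S\in\calC|S$ with $\supp(X|_S)\subseteq\supp(Y|_S)$, then already $\supp(X)\subseteq\supp(Y)$ inside $E$ (both are contained in $S$), so $Y=a\odot X$ for some $a\in H^\times$ by (C3) for $\calC$, and restricting gives $Y|_S=a\odot X|_S$. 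The content is in the elimination-type axioms (WC) and (SC1)--(SC2).

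For the weak case I would argue as follows. Given $X|_S,Y|_S\in\calC|S$ whose supports form a modular pair in $\supp(\calC|S)$, and $e\in\supp(X|_S)\cap\supp(Y|_S)$, I want a $Z\in\calC$ with $\supp(Z)\subseteq S$, $Z(e)=0_H$, and $Z\in X(e)\odot Y-Y(e)\odot X$; its restriction $Z|_S$ is then the desired witness in $\calC|S$. The key point is that $\{\supp(X),\supp(Y)\}$ still forms a modular pair in $\supp(\calC)$: if $A,B\in\calC$ have $\supp(A)\cup\supp(B)\subseteq\supp(X)\cup\supp(Y)\subseteq S$, then $A|_S,B|_S\in\calC|S$ and their supports sit inside $\supp(X|_S)\cup\supp(Y|_S)$, so by modularity in $\supp(\calC|S)$ equality holds, hence $\supp(A)\cup\supp(B)=\supp(X)\cup\supp(Y)$. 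Applying (WC) for $\calC$ produces $Z\in\calC$ with $Z(e)=0_H$ and $Z\in X(e)\odot Y-Y(e)\odot X$; since $Z(f)\in X(e)\odot Y(f)-Y(e)\odot X(f)$ and both $X,Y$ are supported in $S$, every $f\notin S$ forces $Z(f)\in X(e)\odot 0_H-Y(e)\odot 0_H=\{0_H\}$, so $\supp(Z)\subseteq S$, as needed.

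For the strong case the two axioms are handled together with part (2). First, (SC1): $\supp(\calC|S)=\set{\supp(X)}{X\in\calC,\ \supp(X)\subseteq S}$, and this is exactly the collection of circuits of the ordinary matroid $M_\calC$ that are contained in $S$ — i.e.\ the circuit set of the restriction $M_\calC|S$ — by the definition of restriction of ordinary matroids recalled in the preliminaries. This simultaneously proves (SC1) and part (2). For (SC2): given a basis $B'$ of $M_\calC|S$ and $X|_S\in\calC|S$, extend $B'$ to a basis $B$ of $M_\calC$ with $B=B'\sqcup(B\cap(E\setminus S))$. For $e\in S\setminus B'$ one has $e\in E\setminus B$ and the fundamental circuit $C_{B',e}$ in $M_\calC|S$ coincides with the fundamental circuit $C_{B,e}$ in $M_\calC$ (standard for ordinary matroids, since $C_{B,e}\subseteq B\cup\{e\}\subseteq$ is forced into $S$ once $C_{B,e}\subseteq S$, which holds because $C_{B,e}$ is a circuit of $M_\calC|S$); hence $Y_{B',e}=Y_{B,e}|_S$. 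Applying (SC2) for $\calC$ to $B$ and $X$ gives $X\in\sum_{f\in E\setminus B}X(f)\odot Y_{B,f}$; restricting to $S$ and noting $X(f)=0_H$ for $f\in S\setminus\supp(X)$ and that $Y_{B,f}$ for $f\in (E\setminus S)\cap(E\setminus B)$ is killed when we project — more carefully, $\supp(X)\subseteq S$ forces the $f\notin S$ terms to contribute $0_H$ to every coordinate in $S$ via reversibility — yields $X|_S\in\sum_{f\in S\setminus B'}X(f)\odot Y_{B',f}$, which is (SC2) for $\calC|S$.

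The main obstacle I anticipate is the bookkeeping around modular pairs and fundamental circuits: specifically, showing that the modular-pair condition is inherited from $\supp(\calC|S)$ up to $\supp(\calC)$ (it is a statement about the lattice of supports, and one must be careful that incomparability and the "contained in $S$" constraint interact correctly), and dually, in the strong case, verifying that the witness produced by (SC2) for $\calC$ on the extended basis $B$ restricts cleanly — i.e.\ that the extra terms indexed by $E\setminus S$ genuinely vanish on $S$ rather than merely being absorbed into the hyperaddition. Both reduce to the observation that a hyperoperation of functions supported in $S$ stays supported in $S$, together with the elementary compatibility of fundamental circuits with restriction for ordinary matroids; once those two facts are isolated and stated, the verification of each axiom is routine.
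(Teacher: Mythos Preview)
Your proposal is correct and follows essentially the same route as the paper: verify (C1)--(C3) directly, lift a modular pair in $\supp(\calC|S)$ to a modular pair in $\supp(\calC)$ to obtain the (WC) witness $Z$ (whose support is then forced into $S$), and for the strong case extend a basis of $M_\calC|S$ to a basis $\tilde B$ of $M_\calC$, identify $Y_{\tilde B,e}|_S=Y_{B',e}$ via incomparability/uniqueness of fundamental circuits, and restrict the (SC2) relation. One cosmetic remark: in your (SC2) step the invocation of ``reversibility'' is a red herring---the terms indexed by $f\in(E\setminus S)\setminus\tilde B$ vanish simply because $X(f)=0_H$ (as $\supp(X)\subseteq S$), so $X(f)\odot Y_{\tilde B,f}=\mathbf{0}$; no hypergroup axiom beyond $0\odot a=0$ is needed there.
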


\begin{proof}
  One can easily see that if $\calC$ is a set of circuits of an
  $H$-matroid, then
  \[
    \supp(\calC|S)=\supp(\calC)|S
  \]
  and hence $\supp(\calC|S)$ is the set of circuits of the restriction
  of the ordinary matroid; in particular, the second statement follows
  immediately from the first statement.\\
  Now, we prove the first statement. Suppose that $X,Y\in\calC$ have
  $\supp (X),~\supp (Y)\subseteq S$. In this case, we have that
  \begin{equation}\label{restriction condition}
    \supp (X|_S)=\supp (X) \textrm{ and } \supp (Y|_S)=\supp (Y).
  \end{equation}
  We first claim that if $\calC$ is a set of pre-circuits over $H$ on
  $E$, then $\calC|S$ is also a set of pre-circuits over $H$ on
  $S$. Indeed, since $\supp(X) \subseteq S$ and $X \neq \mathbf{0}$,
  we have that $X|_S\neq\mathbf{0}$ and
  \[
    (a\odot X|_S) \in \calC|S, \quad \forall a \in H^\times. 
  \]
  Finally, if $\supp (X|_S)\subseteq\supp (Y|_S)$, then 
  \[
    \supp (X)=\supp (X|_S)\subseteq\supp (Y|_S)=\supp (Y)
  \]
  yields $Y=a\odot X$ for some $a \in H^\times$ and hence
  $Y|_S= a\odot X|_S$ as desired. This proves that $\mathcal{C}|_S$ is
  a set of pre-circuits over $H$ on $S$.\newline
  Next we prove that if $\calC$ is a set of weak-type $H$-circuits on
  $E$, then $\calC|S$ is also a set of weak-type $H$-circuits on
  $S$. In fact, if $X|_S$ and $Y|_S$ form a modular pair in $\calC|S$,
  then \eqref{restriction condition} implies immediately that $X$ and
  $Y$ are a modular pair as well in $\calC$. More precisely, in this
  case, the condition
  \[
    A\cup B\subseteq\supp (X)\cup\supp (Y)\subseteq S, \quad A,B
    \subset \supp(\mathcal{C})
  \]
  implies that $A,B\subseteq S$. Thus,
  $\forall~e\in\supp (X) \cap\supp (Y)$, there exists $Z\in\calC$ such
  that
  \begin{equation}
    Z(e)=0 \textrm{ and } Z\in X(e)Y-Y(e)X.
  \end{equation}
  On the other hand, if $a\notin\supp (X)\cup\supp (Y)$, then
  \[
    X(e)Y(a)-Y(e)X(a)=\{0\}.
  \]
  Thus, for $A \in \mathcal C$, $A\in X(e)Y-Y(e)X$ implies that 
  \[
    \supp (A)\subseteq\supp (X)\cup\supp (Y)\subseteq S.
  \]
  Hence, $\supp(Z) \subseteq S$ and $Z|_S \in \mathcal{C}|_S$. One can
  easily see that $\calC|S$ satisfies the Axiom (WC). This proves that
  $\mathcal{C}|_S$ is a set of weak-type $H$-circuits on $S$.\newline
  Finally, we show that if $\calC$ is a set of strong-type
  $H$-circuits on $E$, then $\calC|S$ is also a set of strong-type
  $H$-circuits on $S$. As we mentioned before,
  $\supp(\calC|S)=\supp(\calC)|S$ is a set of circuits of a matroid as
  these are given by the same formula and $\supp\calC$ is a set of
  circuits of an ordinary matroid; in particular $\calC|S$ satisfies
  axiom (SC1). Let $\calB_{\calC|S}$ (resp. $\calB_{\calC}$) be the
  set of bases of an underlying matroid $M_{\calC|S}$
  (resp. $M_{\calC}$) given by the set $\supp(\calC|S)$
  (resp. $\supp(\calC)$) of circuits. If $B\in\calB_{\calC|S}$, then
  we have that
  \[
    B=\tilde{B}\cap S \textrm{ for some } \tilde{B}\in\calB_{\calC}. 
  \]
  It follows from (SC2), applied to $\calC$ with $\tilde{B}$ and $X$,
  that
  \begin{equation}\label{condition strong}
    X\in\sum_{e\in E\setminus\tilde{B}}X(e)\odot Y_{\tilde{B},e}. 
  \end{equation}
  Now $Y_{\tilde{B},e}|_{S}=Y_{B,e}$ by incomparability of circuits in
  ordinary matroids, and thus we see \eqref{condition strong} implies
  that
  \[
    X|_S\in\sum_{e\in E\setminus B}X|_S(e)\odot Y_{B,e}. 
  \]
  It follows that the axiom (SC2) holds for $\calC|S$ and hence
  $\calC|S$ is a strong-type $H$-circuit set, as claimed.
\end{proof}

Now, thanks to Proposition \ref{circuit-restriction}, the following
definition makes sense.

\begin{mydef}\label{def :restriction of matroids over H}
  Let $M$ be a matroid over hyperfield $H$ on a ground set $E$ given by
  weak (resp.~strong) $H$-circuits $\calC$, and let $S$ be a subset of
  $E$. The \emph{restriction} of matroid $M$ to $S$ is the matroid
  $M|S$ over $H$ given by weak (resp.~strong) $H$-circuits $\calC|S$.
\end{mydef}

\subsection{Grassmann-Pl\"ucker Functions of $H$-Matroid Restrictions}

We now describe restriction of $H$-matroids via Grassmann-Pl\"ucker
functions. Let $H$ be a hyperfield, $E$ a finite set, $r$ a positive
integer, and $\varphi$ a (weak-type or strong-type)
Grassmann-Pl\"ucker function over $H$ on $E$ of rank $r$. Let
$M_\varphi$ denote the underlying matroid of $\varphi$ given by bases:
\[
  \calB_\varphi=\set{\{b_1,\cdots,b_r\}\subseteq E}{\varphi(b_1,\cdots,b_r)\neq0}. 
\]
Recall that for any ordered basis $B=\{b_1,b_2,\cdots,b_k\}$ of
$M_\varphi/(E\setminus S)$, we let
\[
  \mathbf{B}=(b_1,b_2,...,b_k) \in E^k. 
\]
Now, for any subset $S\subseteq E$ and any (ordered) basis
$B=\{b_1,b_2,\cdots,b_k\}$ of $M_\varphi/(E\setminus S)$, we define
\begin{equation}\label{def: GPF for restriction}
  \varphi^{\mathbf{B}}:S^{r-k}\longrightarrow H, \quad \mathbf{A}\mapsto\varphi(\mathbf{A},\mathbf{B}). 
\end{equation}

\begin{pro}\label{gpf-restriction}
  Let $\varphi$ be a weak-type (resp.~ strong-type)
  Grassmann-Pl\"ucker function over $H$ on $E$ of rank $r$ and let
  $S\subseteq E$. For all ordered bases $\mathbf{B}$ of
  $M_\varphi/(E\setminus S)$, the function $\varphi^{\mathbf{B}}$ is a
  weak-type (resp.~ strong-type) Grassmann-Pl\"ucker
  function. Moreover, all such $\varphi^{\mathbf{B}}$ determine the
  $H$-circuits $\calC|S$ of $M|S$.
\end{pro}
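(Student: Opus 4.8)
The plan is to verify the Grassmann-Pl\"ucker axioms for $\varphi^{\mathbf{B}}$ by substituting into the corresponding axioms for $\varphi$, and then to identify the circuit set of $[\varphi^{\mathbf{B}}]$ with $\calC|S$ using the explicit cofactor formula \eqref{GPF to circuits}. The alternating conditions are inherited immediately: permuting the first $r-k$ entries of $(\mathbf{A},\mathbf{B})\in E^{r}$ is a permutation of $E^{r}$ of the same sign, which gives (G2), and a repeated entry in $\mathbf{A}$ is a repeated entry in $(\mathbf{A},\mathbf{B})$, which gives (G3). For (G1) I would use the defining property of $\mathbf{B}$, namely that $\mathbf{A}\mapsto(\mathbf{A},\mathbf{B})$ carries ordered bases of the restriction $M_{\varphi}|S$ to ordered bases of $M_{\varphi}$: this yields some $\mathbf{A}\in S^{r-k}$ with $\varphi(\mathbf{A},\mathbf{B})\neq 0_{H}$. (If $r-k\le 1$ the Pl\"ucker relations below are vacuous.)

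For a weak-type $\varphi$, axiom (WG) for $\varphi^{\mathbf{B}}$ is literally (WG) for $\varphi$: for $a,b,c,d\in S$ and $\mathbf{x}\in S^{r-k-2}$, applying the substitution $\varphi^{\mathbf{B}}(a,b,\mathbf{x})=\varphi(a,b,\mathbf{x},\mathbf{B})$ to each factor turns the three-term relation for $\varphi^{\mathbf{B}}$ into the one for $\varphi$ with the $(r-2)$-tuple $(\mathbf{x},\mathbf{B})$. For a strong-type $\varphi$, I would instead apply (SG) for $\varphi$ to the tuples $(\mathbf{x},\mathbf{B})\in E^{r+1}$ and $(\mathbf{y},\mathbf{B})\in E^{r-1}$, where $\mathbf{x}\in S^{r-k+1}$ and $\mathbf{y}\in S^{r-k-1}$. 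Among the $r+1$ resulting summands, the $k$ in which the singled-out index lies in the $\mathbf{B}$-block have a factor of the form $\varphi(b,\mathbf{y},\mathbf{B})$ with $b$ already occurring in $\mathbf{B}$, hence equal to $0_{H}$ by (G3); the remaining $r-k+1$ summands are, after the substitution, exactly those of (SG) for $\varphi^{\mathbf{B}}$. Thus $\varphi^{\mathbf{B}}$ is a Grassmann-Pl\"ucker function of the same type as $\varphi$.

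For the final assertion I would show that $[\varphi^{\mathbf{B}}]$ has circuit set $\calC|S$. The underlying matroid of $\varphi^{\mathbf{B}}$ is the ordinary restriction $M_{\varphi}|S$ (again by the property of $\mathbf{B}$), so by \eqref{GPF to circuits} its circuits are, up to $H^{\times}$, the functions $X_{\mathbf{A},e}$ attached to an ordered basis $\mathbf{A}$ of $M_{\varphi}|S$ and an element $e\in S\setminus A$, supported on the fundamental circuit $C_{A,e}$ computed in $M_{\varphi}|S$. Given such $\mathbf{A}$ and $e$, put $\mathbf{C}:=(\mathbf{A},\mathbf{B})$, an ordered basis of $M_{\varphi}$; since $C_{A,e}\subseteq A\cup\{e\}\subseteq C\cup\{e\}$, the fundamental circuit of $e$ with respect to $\mathbf{C}$ in $M_{\varphi}$ is again $C_{A,e}$, so the $H$-circuit $X_{\mathbf{C},e}\in\calC$ has support inside $S$. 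Restricting the relation \eqref{GPF to circuits} defining $X_{\mathbf{C},e}$ to the indices $i\le r-k$ and using $\varphi(e,\mathbf{A}|_{[r-k]\setminus\{i\}},\mathbf{B})=\varphi^{\mathbf{B}}(e,\mathbf{A}|_{[r-k]\setminus\{i\}})$ together with $\varphi(\mathbf{A},\mathbf{B})=\varphi^{\mathbf{B}}(\mathbf{A})$ shows that $X_{\mathbf{C},e}|_{S}$ satisfies the very relation that defines $X_{\mathbf{A},e}$ from $\varphi^{\mathbf{B}}$; as both are supported on $C_{A,e}$ they are $H^{\times}$-equivalent, so $X_{\mathbf{A},e}\in\calC|S$. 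Conversely, an $X\in\calC$ with $\supp(X)\subseteq S$ is supported on a circuit of $M_{\varphi}$ lying in $S$, which is the fundamental circuit $C_{A,e}$ of some ordered basis $\mathbf{A}$ of $M_{\varphi}|S$ and some $e\in\supp(X)$; since the $H$-circuit on a prescribed ordinary circuit is unique up to $H^{\times}$ by (C3), the same computation identifies $X|_{S}$ with $X_{\mathbf{A},e}$ up to $H^{\times}$. Both $\calC|S$ and the circuit set of $\varphi^{\mathbf{B}}$ being $H^{\times}$-stable, they coincide; in particular all the $\varphi^{\mathbf{B}}$ represent the same $H$-matroid.

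The axiom-checking in the first two steps is essentially bookkeeping once the length identities $(r-k-2)+k=r-2$ and $(r-k\pm 1)+k=r\pm 1$ are observed. I expect the genuinely delicate point to be the matching of $H^{\times}$-orbits in the last step: one must verify not merely that supports correspond, but that the $H$-valued circuit functions obtained from \eqref{GPF to circuits} --- applied to $\varphi$ and then restricted to $S$, versus applied directly to $\varphi^{\mathbf{B}}$ --- agree up to a scalar. This is exactly where the compatibility of fundamental circuits under the basis extension $\mathbf{A}\mapsto(\mathbf{A},\mathbf{B})$ and the precise form of the cofactor formula \eqref{GPF to circuits} are essential.
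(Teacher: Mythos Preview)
Your proof is correct and follows essentially the same approach as the paper: you inherit the alternating conditions from $\varphi$, verify (WG) and (SG) for $\varphi^{\mathbf{B}}$ by substituting $(\mathbf{x},\mathbf{B})$ and $(\mathbf{y},\mathbf{B})$ into the corresponding relations for $\varphi$ (noting in the strong case that the summands with index in the $\mathbf{B}$-block vanish by degeneracy), and then match the circuit sets via the cofactor formula \eqref{GPF to circuits} applied to the extended basis $(\mathbf{A},\mathbf{B})$. Your treatment is in fact slightly more thorough than the paper's, since you explicitly argue the converse inclusion $\calC|S\subseteq\calC_{\varphi^{\mathbf{B}}}$ using (C3), whereas the paper leaves this implicit.
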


\begin{proof}
  For the notational convenience, we let $[n]=\{1,2,\cdots,n\}$ and we
  regard $\mathbf{B}$ as a function $\mathbf{B}:[k]\to E$. First, one
  can observe that $\varphi^{\mathbf{B}}$ is a nontrivial
  $H$-alternating function as $\varphi^{\mathbf{B}}$ is a restriction
  of a nontrivial $H$-alternating function to a subset containing a
  base of $M_\varphi$. We claim that if $\varphi$ is a weak-type
  Grassmann-Pl\"ucker function over $H$, then $\varphi^{\mathbf{B}}$
  is also a weak-type Grassmann-Pl\"ucker function over $H$. To see
  this, let $a,b,c,d\in E$ and $\mathbf{Y}:[r-k-1]\to E$ be given.
  Applying Axiom (WG) to $a,b,c,d \in E$ and
  $\mathbf{x}=(\mathbf{Y},\mathbf{B}) \in E^{r-2}$, we obtain the
  following:
  \begin{align*}\label{GPF computation}
    0_H& \in\varphi(a,b,\mathbf{Y},\mathbf{B})\varphi(c,d,\mathbf{Y},\mathbf{B})
       -\varphi(a,c,\mathbf{Y},\mathbf{B})\varphi(b,d,\mathbf{Y},\mathbf{B})
       +\varphi(a,d,\mathbf{Y},\mathbf{B})\varphi(b,c,\mathbf{Y},\mathbf{B})
    \\
     & =\varphi^{\mathbf{B}}(a,b,\mathbf{Y})\varphi^{\mathbf{B}}(c,d,\mathbf{Y})
       -\varphi^{\mathbf{B}}(a,c,\mathbf{Y})\varphi^{\mathbf{B}}(b,d,\mathbf{Y})
       +\varphi^{\mathbf{B}}(a,d,\mathbf{Y})\varphi^{\mathbf{B}}(b,c,\mathbf{Y}).
  \end{align*}
  This shows that Axiom (WG) holds for $\varphi^{\mathbf{B}}$ and
  hence $\varphi^{\mathbf{B}}$ is a weak-type Grassmann-Pl\"ucker
  function over $H$.\newline
  We next show that if $\varphi$ is a strong-type Grassmann-Pl\"ucker
  function over $H$, then $\varphi^{\mathbf{B}}$ is also a strong-type
  Grassmann-Pl\"ucker function over $H$. Indeed, let
  $\mathbf{X}:[r-k+1]\to S$ and $\mathbf{Y}:[r-k-1]\to S$ be
  given. Applying Axiom (SG) to $\mathbf{x}:=(\mathbf{X},\mathbf{B})$
  and $\mathbf{y}:=(\mathbf{Y},\mathbf{B})$, we obtain the following:
  \begin{align*}
    0_H& \in\sum_{j\in[r-k+1]}(-1)^j
       \varphi(\mathbf{X}|_{[r-k+1]\setminus\{j\}},\mathbf{B})
       \varphi(\mathbf{X}(j),\mathbf{Y},\mathbf{B})
       +\sum_{j\in[k]}(-1)^{r-k+1+j}
       \varphi(\mathbf{X},\mathbf{B}|_{[k]\setminus\{j\}})
       \varphi(\mathbf{B}(j),\mathbf{Y},\mathbf{B})
    \\
     & =\sum_{j\in[r-k+1]}(-1)^j
       \varphi(\mathbf{X}|_{[r-k+1]\setminus\{j\}},\mathbf{B})
       \varphi(\mathbf{X}(j),\mathbf{Y},\mathbf{B})
       +\sum_{j\in[k]}(-1)^{r-k+1+j}0_H
    \\
     & =\sum_{j\in[r-k+1]}(-1)^j
       \varphi^{\mathbf{B}}(\mathbf{X}|_{[r-k+1]\setminus\{j\}})
       \varphi^{\mathbf{B}}(\mathbf{X}(j),\mathbf{Y}).
  \end{align*}
  This shows that Axiom (SG) holds for $\varphi^{\mathbf{B}}$ and
  hence $\varphi^{\mathbf{B}}$ is a strong-type Grassmann-Pl\"ucker
  function over $H$.\newline
  Finally, we show that $\varphi^{\mathbf{B}'}$ determines the same
  set of circuits as $\varphi^{\mathbf{B}}$ for all ordered bases
  $\mathbf{B}$ and $\mathbf{B}'$ of $M_\varphi/(E\setminus
  S)$. Indeed, we show that the circuits determined by
  $\varphi^{\mathbf{B}}$ are precisely $\calC|S$. Fix an ordered base
  $\mathbf{A}$ of $M_\varphi|S$. Now,
  $\mathbf{y}:=(\mathbf{A},\mathbf{B})$ is an ordered base of
  $M_\varphi$. Moreover, for all $e\in S\setminus A$, the fundamental
  $H$-circuit $X=X_{A\cup B,e}$ satisfies
\begin{align*}
  X|_S(\mathbf{A}(i))X|_S(e)^{-1}
  & =X(\mathbf{y}(i))X(e)^{-1}
  \\
  & =(-1)^i
    \varphi(e,\mathbf{A}|_{[r-k]\setminus\{i\}},\mathbf{B})
    \varphi(\mathbf{A},\mathbf{B})^{-1}
  \\
  & =(-1)^i
    \varphi^{\mathbf{B}}(e,\mathbf{A}|_{[r]\setminus\{i\}})
    \varphi^{\mathbf{B}}(\mathbf{A})^{-1}.
\end{align*}
for all $i\in[r-k]$ by the cryptomorphism relating $\calC$ and
$\varphi$.  On the other hand, $X|_S=X_{A\cup B,e}|_S$ is the
fundamental $H$-circuit for $e$ by the basis $A$ in $\calC|S$.  Hence
$\calC|S$ is the set of $H$-circuits determined by the
Grassmann-Pl\"ucker function $\varphi^{\mathbf{B}}$ for all ordered
bases $\mathbf{B}$ of $M_\varphi/(E\setminus S)$.
\end{proof}

% \begin{rem}
%   Our proof of this result has the nice consequence that
%   $\varphi^{\mathbf{B}}$ and $\varphi^{\mathbf{B}'}$ belong to the same
%   $H^\times$-orbit for all ordered bases $\mathbf{B}$ and $\mathbf{B}'$ of
%   $M_\varphi/(E\setminus S)$; this follows because the cryptomorphism
%   of $H$-circuits and $H$-GPFs is bijective.
% \end{rem}

We summarize our results from this section as follows:

\begin{pro}\label{restrict}
  The restriction of an $H$-matroid to a subset is well-defined, and
  admits cryptomorphic description in terms of Grassmann-Pl\"ucker
  functions over $H$ and $H$-circuits.  Furthermore, this
  correspondence preserves types and all such restrictions have
  underlying matroid the ordinary restriction. Finally, we have the
  following:
  \begin{enumerate}
  \item\label{circuit-rest} The restriction $M|S$ is given by
    $H$-circuits
    \[
      \calC|S = \set{X|_{S}} {X\in\calC\text{ and }\supp(X)\subseteq S}.
    \]
  \item\label{gpf-rest} The restriction $M|S$ is obtained by fixing
    any base $\mathbf{B}=(b_1,b_2,\cdots,b_k)$ of
    $M_\varphi/(E\setminus S)$ and defining:
    \[
      \varphi^{\mathbf{B}}:S^{r-k}\longrightarrow H, \quad \mathbf{x}\mapsto\varphi(\mathbf{x},\mathbf{B}).
    \]
    In particular, the $H$-matroid $M|S$ is determined by the
    $H^\times$-class $[\varphi^{\mathbf{B}}]$ of any such
    $\mathbf{B}$.
  \end{enumerate}
\end{pro}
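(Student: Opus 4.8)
The plan is to assemble this statement from the two preceding propositions, since Proposition~\ref{restrict} is essentially their consolidation together with an independence-of-choices remark; the work is thus almost entirely bookkeeping. Concretely, I would check that the circuit picture and the Grassmann-Pl\"ucker picture literally describe the same $H$-matroid, that this $H$-matroid has the same type as $M$ and has the expected underlying matroid, and that neither description depends on an auxiliary choice.

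First I would record the circuit description. By Proposition~\ref{circuit-restriction}, given a set $\calC$ of weak-type (resp.~strong-type) $H$-circuits of an $H$-matroid $M$ on $E$ and any $S\subseteq E$, the set $\calC|S$ of \eqref{def: H-circuit restriction} is again a set of weak-type (resp.~strong-type) $H$-circuits on $S$, and by the same proposition its underlying matroid is the ordinary restriction $\supp(M)|S$. By Definition~\ref{def :restriction of matroids over H} this $\calC|S$ is precisely what we have named $M|S$, which yields item~\eqref{circuit-rest}, preservation of type, and the statement about underlying matroids simultaneously.

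Next I would record the Grassmann-Pl\"ucker description together with its compatibility with the circuit one. Fix a Grassmann-Pl\"ucker function $\varphi$ on $E$ of rank $r$ representing $M$. By Proposition~\ref{gpf-restriction}, for any ordered base $\mathbf{B}=(b_1,\dots,b_k)$ of $M_\varphi/(E\setminus S)$ the function $\varphi^{\mathbf{B}}$ of \eqref{def: GPF for restriction} is a weak-type (resp.~strong-type) Grassmann-Pl\"ucker function, and it determines precisely the circuit set $\calC|S$. This last assertion is the bridge between the two pictures: the $H$-matroid $[\varphi^{\mathbf{B}}]$ has circuit set $\calC|S$, which is exactly the defining circuit set of $M|S$, so under the cryptomorphism of \cite{baker2016matroids} the Grassmann-Pl\"ucker description and the circuit description name the same $H$-matroid, giving item~\eqref{gpf-rest}.

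Finally, for well-definedness: $\calC|S$ depends only on $\calC$, hence only on the $H^\times$-class of $\varphi$, so the circuit side is unambiguous; and the proof of Proposition~\ref{gpf-restriction} already shows that $\varphi^{\mathbf{B}}$ and $\varphi^{\mathbf{B}'}$ determine the same circuit set $\calC|S$ for any two ordered bases of $M_\varphi/(E\setminus S)$. Since the passage from Grassmann-Pl\"ucker functions to circuit sets is injective on $H^\times$-classes (again \cite{baker2016matroids}), it follows that $[\varphi^{\mathbf{B}}]=[\varphi^{\mathbf{B}'}]$, so $M|S$ is independent of the choice of base. I do not expect a genuine obstacle here: the only point deserving a moment's care is checking that $M_\varphi/(E\setminus S)$ is the correct gadget, i.e.\ that a base of this contraction together with a base of the ordinary restriction $M_\varphi|S$ assembles to a base of $M_\varphi$, which is what makes $\varphi^{\mathbf{B}}$ nontrivial of the correct rank $r-k$; but this is a standard fact about ordinary matroids, so I would keep the proof short by simply deferring to Propositions~\ref{circuit-restriction} and~\ref{gpf-restriction}.
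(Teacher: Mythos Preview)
Your proposal is correct and matches the paper's approach: the paper presents Proposition~\ref{restrict} explicitly as a summary of Propositions~\ref{circuit-restriction} and~\ref{gpf-restriction} (introducing it with ``We summarize our results from this section as follows'') and gives no separate proof. Your bookkeeping---deferring to those two propositions for the circuit and Grassmann--Pl\"ucker descriptions, for type preservation, for the underlying matroid, and for the cryptomorphic compatibility---is exactly what is intended, and your additional remark on independence of the choice of $\mathbf{B}$ is a clean way to make the ``well-defined'' claim precise.
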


\subsection{Deletion and Contraction}
As noted previously, deletion and contraction for $H$-matroids were
defined by Baker and Bowler in \cite{baker2016matroids} by using
Grassmann-Pl\"ucker functions. In this section, we also provide a
cryptomorphic definition for deletion and contraction via $H$-circuits
by appealing to the definitions of dual $H$-matroids and restrictions.
Throughout let $H$ be a hyperfield, $E$ a finite set, $r$ a positive
integer, and $M$ be a matroid over $H$ on ground set $E$ of rank $r$
with circuits $\calC$ and a Grassmann-Pl\"ucker function $\varphi$

\begin{mydef}\label{def: deletion and contraction}
  Let $S$ be a subset of $E$.
  \begin{enumerate}
  \item The \emph{deletion} $M\setminus S$ of $S$ from $M$ is the
    $H$-matroid $M|(E\setminus S)$.
  \item The \emph{contraction} $M/S$ of $S$ from $M$ is the
    $H$-matroid $(M^*\setminus S)^*$.
  \end{enumerate}
\end{mydef}

\begin{rmk}
  It follows from Definition \ref{def: deletion and contraction} that
  if $M$ is a weak-type (resp.~strong-type), then the deletion
  $M\setminus S$ and the contraction $M/S$ are also weak-type
  (resp.~strong-type).
\end{rmk}

% We now summarize the structures of deletions and contractions of
% $H$-matroids in terms of $H$-circuits and $H$-GPFs.

\begin{pro}\label{deletion circuit}
  Let $S$ be a subset of $E$.
  \begin{enumerate}
  \item\label{circuit-delete} The deletion $M\setminus S$ is given by
    $H$-circuits
    \begin{equation}
      \calC|(E\setminus S)
      =\set{X|_{E\setminus S}}
      {X\in\calC\text{ and }S\cap\supp(X)=\emptyset}.
    \end{equation}

  \item\label{gpf-delete} The deletion $M\setminus S$ is obtained by
    fixing base $\mathbf{B}=(b_1,b_2,\cdots,b_{r-k})$ of $M_\varphi/S$
    and letting
    $\varphi^{\mathbf{B}}:(E\setminus S)^k\to H
    :\mathbf{x}\mapsto\varphi(\mathbf{x},\mathbf{B})$.  The
    $H$-matroid $M\setminus S$ is determined by the $H^\times$-class
    $[\varphi^{\mathbf{B}}]$ for any such $\mathbf{B}$.
  \end{enumerate}
\end{pro}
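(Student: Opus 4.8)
The plan is to deduce Proposition \ref{deletion circuit} entirely from the already-established results on restriction (Proposition \ref{restrict}), since by Definition \ref{def: deletion and contraction} we have $M \setminus S = M|(E \setminus S)$. For part \eqref{circuit-delete}, I would simply substitute $E \setminus S$ for the subset ``$S$'' in the circuit description of Proposition \ref{restrict}\eqref{circuit-rest}: the restriction $M|(E\setminus S)$ is given by the $H$-circuits $\set{X|_{E\setminus S}}{X \in \calC \text{ and } \supp(X) \subseteq E \setminus S}$, and the condition $\supp(X) \subseteq E \setminus S$ is literally equivalent to $S \cap \supp(X) = \emptyset$. This is a one-line translation, and it also immediately inherits the preservation of type (weak or strong) and the fact that the underlying matroid is the ordinary deletion, via the corresponding statements in Proposition \ref{circuit-restriction} and the remark following Definition \ref{def: deletion and contraction}.

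For part \eqref{gpf-delete}, I would apply Proposition \ref{restrict}\eqref{gpf-rest} with the subset being $E \setminus S$. That statement says $M|(E\setminus S)$ is computed by fixing any ordered base $\mathbf{B} = (b_1,\dots,b_k)$ of $M_\varphi/\bigl(E \setminus (E\setminus S)\bigr) = M_\varphi/S$ and setting $\varphi^{\mathbf{B}}(\mathbf{x}) = \varphi(\mathbf{x}, \mathbf{B})$ on $(E\setminus S)^{r-k}$, where $k$ is the rank of $M_\varphi/S$. The only bookkeeping point is the naming of the exponent: if one writes the base of $M_\varphi/S$ as $\mathbf{B} = (b_1,\dots,b_{r-k})$ — i.e., relabels so that $r-k = \dim(M_\varphi/S)$ and hence $k = r - \dim(M_\varphi/S)$ is the rank of the deletion $M_\varphi \setminus S = M_\varphi|(E\setminus S)$ — then $\varphi^{\mathbf{B}}$ has domain $(E\setminus S)^{k}$, matching the statement. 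I would note that $\dim(M_\varphi/S) = r - \dim(M_\varphi|(E\setminus S))$ is the standard rank identity for ordinary matroids, which guarantees the arithmetic of the index is consistent.

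There is essentially no obstacle here: the proposition is a direct corollary of Proposition \ref{restrict} together with the definition $M\setminus S := M|(E\setminus S)$, and the proof amounts to unwinding notation and checking that ``$\supp(X) \subseteq E\setminus S$'' and ``$S \cap \supp(X) = \emptyset$'' say the same thing, plus a trivial index-matching computation for the Grassmann--Pl\"ucker description. Accordingly, I expect the write-up to be short — little more than ``apply Proposition \ref{restrict} to the subset $E \setminus S$ and rewrite the support condition; the claim about types and underlying matroids is inherited from Proposition \ref{circuit-restriction}.'' The only thing that needs a word of care is making sure the reader sees why the base is one of $M_\varphi/S$ rather than $M_\varphi|(E \setminus S)$: it is because $M|(E\setminus S)$ in the restriction formula requires a base of $M_\varphi$ \emph{contracted by the complement of the restricting set}, and here the complement of $E\setminus S$ is $S$.
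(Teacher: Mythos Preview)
Your approach is exactly the paper's: its proof consists of the single line ``The first statement is immediate from the definition of the deletion and the second statement directly follows from Proposition~\ref{restrict},'' and you are doing precisely that substitution of $E\setminus S$ into Proposition~\ref{restrict}. One small caution: the identity you cite, $\dim(M_\varphi/S) = r - \dim(M_\varphi|(E\setminus S))$, is not a valid rank formula in general (the correct one is $r(M/S) = r - r_M(S)$, and $r_M(S)$ need not equal $r_M(E\setminus S)$); however this is immaterial to the argument, since the index matching between the two propositions is a purely formal relabeling ($k \leftrightarrow r-k$) coming straight out of Proposition~\ref{restrict} and requires no rank computation at all.
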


\begin{proof}
  The first statement is immediate from the definition of the deletion
  and the second statement directly follows from Proposition
  \ref{restrict}.
\end{proof}

A description of contractions is a bit more complicated.

\begin{pro}\label{contraction}
  Let $S$ be a subset of $E$.
  \begin{enumerate}
  \item\label{circuit-contr} The contraction $M/S$ is given by
    $H$-circuits $\calC''=(\calC^*|(E\setminus S))^*$.  More
    explicitly
    \begin{equation}\label{circuit eq}
      \calC''=\min\set{Z\in H^{E\setminus S}\setminus\{\mathbf{0}\}}
      {\begin{matrix}
          Z\perp X|_{E\setminus S}
          \text{ for all }X\in H^E\setminus\{\mathbf{0}\}
          \\
          \text{ with }\supp(X)\cap S=\emptyset
          \text{ and }X\perp Y
          \text{ for all }Y\in\calC
        \end{matrix}
      }.
    \end{equation}
  \item\label{gpf-contr} The contraction $M/S$ is given by the class
    of Grassmann-Pl\"ucker functions $((\varphi^*)_{E\setminus S})^*$.
    More explicitly, let $\mathbf{B}=(b_1,\cdots,b_k)$ be an ordered
    basis of $M_\varphi|S$.  A representative of the $H^\times$-orbit
    of Grassmann-Pl\"ucker functions determining $M/S$ is given by
    \begin{equation}\label{GPF}
      \varphi'':(E\setminus S)^{r-k}\longrightarrow H, \quad \mathbf{x}\mapsto\varphi(\mathbf{B},\mathbf{x}). 
    \end{equation}
  \end{enumerate}
\end{pro}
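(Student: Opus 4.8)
The plan is to reduce both assertions to facts already established: the circuit and Grassmann--Pl\"ucker descriptions of deletion (Proposition~\ref{deletion circuit}, and Proposition~\ref{restrict}) together with the duality of $H$-matroids recalled in the preliminaries. By Definition~\ref{def: deletion and contraction} we have $M/S = (M^*\setminus S)^*$, and since duality of Grassmann--Pl\"ucker functions preserves type and deletion preserves type (Proposition~\ref{restrict}), the contraction $M/S$ is automatically of the same type as $M$; only the two explicit formulas require work.

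For part~(1), $M^*$ has circuit set $\calC^*$ (the minimal-support elements of $\calC^{\perp}$), so Proposition~\ref{deletion circuit} applied to $M^*$ shows that $M^*\setminus S$ has circuit set $\calC^*|(E\setminus S)$; dualizing once more, $M/S = (M^*\setminus S)^*$ has circuit set $\calC'' = \bigl(\calC^*|(E\setminus S)\bigr)^*$. To extract the closed form \eqref{circuit eq} I would unwind this orthogonality condition: for $X\in H^E$ with $\supp(X)\cap S = \emptyset$ the dot product $X\cdot Y$ involves only coordinates in $E\setminus S$, so ``$X\perp Y$ for all $Y\in\calC$'' is equivalent to ``$X|_{E\setminus S}\perp Y|_{E\setminus S}$ for all $Y\in\calC$'' (the support-size clause defining $\perp_w$ is likewise unaffected), and the inclusion-minimal such $X|_{E\setminus S}$ are precisely the members of $\calC^*|(E\setminus S)$. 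Using the weak- or strong-type orthogonality and restriction properties of \cite[\S 6.6]{baker2016matroids} --- by which orthogonality to all circuits of an $H$-matroid coincides with orthogonality to all of its vectors --- orthogonality of $Z\in H^{E\setminus S}$ to every such $X|_{E\setminus S}$ is the same as $Z\in\bigl(\calC^*|(E\setminus S)\bigr)^{\perp}$; taking minimal supports gives \eqref{circuit eq}. The underlying matroid of $\calC''$ is the ordinary contraction $M_\varphi/S$ since $\supp(\calC^*) = \supp(\calC)^*$ and deletion commutes with the push-forward to $\mathbb{K}$.

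For part~(2), I would first verify the closed form \eqref{GPF} directly and then reconcile it with $\bigl((\varphi^*)_{E\setminus S}\bigr)^*$. Fix an ordered basis $\mathbf{B} = (b_1,\dots,b_k)$ of $M_\varphi|S$; since $M_\varphi|S$ is a restriction, $\mathbf{B}$ extends to an ordered basis of $M_\varphi$, so $\varphi''\colon (E\setminus S)^{r-k}\to H$, $\mathbf{x}\mapsto\varphi(\mathbf{B},\mathbf{x})$, is not identically zero, and it inherits alternation and the degeneracy conditions from $\varphi$. The axiom (WG) (resp.\ (SG)) for $\varphi''$ is then obtained exactly as in the proof of Proposition~\ref{gpf-restriction}: apply (WG) for $\varphi$ to $a,b,c,d\in E\setminus S$ and $\mathbf{x} = (\mathbf{B},\mathbf{Y})$ (resp.\ apply (SG) for $\varphi$ to $\mathbf{x} = (\mathbf{B},\mathbf{X})$ and $\mathbf{y} = (\mathbf{B},\mathbf{Y})$), the terms repeating a coordinate of $\mathbf{B}$ vanishing by the degeneracy condition. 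Since $\varphi''(\mathbf{x})\neq 0_H$ iff $\mathbf{B}\cup\mathbf{x}$ is a basis of $M_\varphi$, which is the basis characterization of $M_\varphi/S$, the underlying matroid of $\varphi''$ is $M_\varphi/S$. Finally, writing $\varphi^*(\mathbf{C}) = \sgn_\leq(\mathbf{C},\mathbf{E\setminus C})\,\varphi(\mathbf{E\setminus C})$, applying the deletion formula of Proposition~\ref{restrict} to $M^*$, and dualizing a second time, one checks that $\bigl((\varphi^*)_{E\setminus S}\bigr)^*$ agrees with $\varphi''$ up to a fixed element of $H^{\times}$ (a sign, arising from reordering $(\mathbf{B},\mathbf{x})$ and from the two auxiliary orderings used in the dualizations); as $\bigl((\varphi^*)_{E\setminus S}\bigr)^*$ represents $M/S$ by construction, so does $\varphi''$, which in particular shows \eqref{GPF} does not depend on the chosen $\mathbf{B}$. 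Alternatively, one verifies directly --- via a fundamental-circuit computation in the spirit of Proposition~\ref{gpf-restriction}, together with part~(1) --- that $\varphi''$ determines the circuit set $\calC''$.

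The main obstacle is bookkeeping, not conceptual. In part~(1) it is matching the abstract ``dual of deletion of dual'' with the explicit set \eqref{circuit eq}, where the $\perp_w$ support clause and the correct weak/strong compatibility of restriction with cocircuits and vectors from \cite{baker2016matroids} must be handled carefully. In part~(2) it is the sign chase through two order-dependent dualizations composed with a deletion; this is precisely where conflating the weak and strong axioms (the error noted in \cite{anderson2012foundations}) would cause trouble, so both types must be propagated separately throughout.
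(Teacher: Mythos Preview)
Your proposal is correct. For part~(1) your argument is the paper's: both unwind $(\calC^*|(E\setminus S))^*$ through the definitions, though you are a bit more careful than the paper about why orthogonality to the support-minimal cocircuits agrees with orthogonality to all such $X$.

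For part~(2) your primary route genuinely differs from the paper's. You propose first verifying (WG)/(SG) for $\varphi''$ directly, mimicking Proposition~\ref{gpf-restriction} with $\mathbf{B}$ prepended instead of appended, and then performing a sign chase through the order-dependent double dual $((\varphi^*)_{E\setminus S})^*$ to identify it with $\varphi''$ up to a unit. The paper instead takes exactly the route you relegate to your last sentence as an ``alternative'': it never touches the explicit formula for $\varphi^*$ at all, but fixes a base $A$ of $M_\varphi/S$, observes that $A\cup B$ is a base of $M_\varphi$, shows that the restriction to $E\setminus S$ of the fundamental $H$-circuit $\tilde C_{A\cup B,e}$ is the fundamental $H$-circuit $C_{A,e}$ in $\calC''$, and then checks the single cryptomorphism identity $(-1)^i\varphi''(e,\mathbf{A}|_{[r-k]\setminus\{i\}})\varphi''(\mathbf{A})^{-1}=C_{A,e}(\mathbf{A}(i))C_{A,e}(e)^{-1}$. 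Your approach stays entirely in the Grassmann--Pl\"ucker world and is more self-contained, at the cost of the double-dual sign bookkeeping you flag as the main obstacle; the paper's approach is shorter once part~(1) is available, since it piggybacks on the circuit description and sidesteps that sign chase entirely.
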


\begin{proof}
  We note that the formula for $\varphi''$ in \eqref{GPF} is given in
  \cite{baker2016matroids}, with proof deferred to
  \cite{anderson2012foundations}; we give a new proof here.\newline
  \textit{Proof of \eqref{circuit-contr}}: Since
  $M/S:=(M^*\setminus S)^*$, the formula
  $\calC'' = (\calC^*|(E\setminus S))^*$ follows from the duality
  cryptomorphism and the restriction constructions of Propositions
  \ref{restrict} and \ref{deletion circuit}. Recall that
  \[
    \calC^*=\min \set{X\in H^E\setminus\{\mathbf{0}\}} {X\perp Y\text{
        for all }Y\in\calC},
  \]
  where ``$\min$'' means minimal support. Now, the following shows
  \eqref{circuit eq}:
  \begin{align*}
    (\calC^*|(E\setminus S))^*
    & =(\min\set{X\in H^E\setminus\{\mathbf{0}\}}
      {X\perp Y\text{ for all }Y\in\calC}
      |(E\setminus S))^\perp
    \\
    & =\min\set{X|_{E\setminus S}\in H^{E\setminus S}}
      {\begin{matrix}
          X\in H^E\setminus\{\mathbf{0}\}
          \text{ and }\supp(X)\cap S=\emptyset
          \\
          \text{ and }X\perp Y
          \text{ for all }Y\in\calC
        \end{matrix}}^\perp
    \\
    & =\min\set{Z\in H^{E\setminus S}\setminus\{\mathbf{0}\}}
      {\begin{matrix}
          Z\perp X|_{E\setminus S}
          \text{ for all }X\in H^E\setminus\{\mathbf{0}\}
          \\
          \text{ with }\supp(X)\cap S=\emptyset
          \text{ and }X\perp Y
          \text{ for all }Y\in\calC
        \end{matrix}}
  \end{align*}
  \textit{Proof of \eqref{gpf-contr}}: We prove that $\varphi''$ is
  the Grassmann-Pl\"ucker function determined by $\calC''$. Let $B$ be
  a base of the ordinary matroid $M_\varphi|S$. Then, for any base $A$
  of the ordinary matroid $M_\varphi/S$, $A\cup B$ is a base of
  $M_\varphi$. Let $e\in(E\setminus S)\setminus A$ be given, and let
  $\tilde{C}_{A\cup B,e}$ be the fundamental circuit of $e$ with
  respect to $A\cup B$ in $M$ (see, \eqref{GPF to circuits} and the
  paragraph before it). Now suppose $X\in H^E\setminus\{\mathbf{0}\}$
  satisfies the conditions that:
  \[
    \supp(X)\cap S=\emptyset, \quad \textrm{$X\perp Y$
      $\forall~Y\in\calC$}.
  \]
  In particular, $X\perp\tilde{C}_{A\cup B,e}$ since
  $\tilde{C}_{A\cup B,e}\in\calC$. On the other hand, as $X(s)=\{0\}$
  $\forall~s\in S$, we have that
  \[
    X|_{E\setminus S}\perp\tilde{C}_{A\cup B,e}|_{E\setminus S}. 
  \]
  Hence $\tilde{C}_{A\cup B,e}|_{E\setminus S}=C_{A,e}$ is the
  fundamental $H$-circuit of $e\in E\setminus S$ with respect to $A$
  in $M/S$ by incomparability of supports of elements in $\calC''$ and
  the fact that $\supp(\tilde{C}_{A\cup B,e}|_{E\setminus S})$ is
  precisely the fundamental circuit of $e$ by $A$ in the underlying
  matroid of the contraction. What remains is a computation:
  \begin{align*}
    (-1)^{i}\varphi''(e,\mathbf{A}|_{[r-k]\setminus\{i\}})
    \varphi''(\mathbf{A})^{-1}
    & =(-1)^{i}\varphi(\mathbf{B},e,\mathbf{A}|_{[r-k]\setminus\{i\}})
      \varphi(\mathbf{B},\mathbf{A})^{-1}
    \\
    & =\tilde{C}_{A\cup B,e}(\mathbf{A}(i))\tilde{C}_{A\cup B,e}(e)^{-1}
    \\
    & =C_{A,e}(\mathbf{A}(i))C_{A,e}(e)^{-1}.\qedhere
  \end{align*}
  % That $\varphi''$ is a GPF of the same
  % type as $\varphi$ follows by noting that the desired
  % Grasssmann-Pl\"ucker relations for $\varphi''$ can be expressed as
  % relations for $\varphi$ by extending the given functions by the
  % basis $\mathbf{B}$.  To show that $\varphi''$ is in the same
  % $H^\times$-class as $((\varphi^*)_{E\setminus S})^*$, it suffices to
  % show that the GPFs $(\varphi^*)^{\mathbf{A}}$ and $(\varphi'')^*$ are
  % in the same $H^\times$-orbit where $\mathbf{A}=(a_1,\cdots,a_k)$ is any
  % base of $M_\varphi/S$; note that $(\mathbf{A},\mathbf{B})$ is a base of
  % $M_\varphi$ by elementary matroid theory.  Let
  % $\mathbf{x}=(x_1,\cdots,x_\ell)$ be an ordered base of the underlying
  % matroid of these GPFs (namely
  % $M_\varphi^*\setminus S=(M_\varphi/S)^*$, and
  % $x\subseteq E\setminus S$).  Now we have the following:
  % \begin{align*}
  %   % (\varphi'')^*(\mathbf{x})
  %   % & =\sgn(\mathbf{x},\mathbf{A},\mathbf{E\setminus(S\cup A\cup x)})
  %   %   \varphi(\mathbf{E\setminus(S\cup A\cup x)},\mathbf{B})
  %   % \\
  %   % (\varphi^*)^{\mathbf{A}}(\mathbf{x})
  %   % & =\sgn(\mathbf{x},\mathbf{E\setminus x})
  %   %   \varphi(\mathbf{E\setminus x},\mathbf{A})
  % \end{align*}
  % In particular $E\setminus(S\cup A)$ is a base of $$
\end{proof}

\subsection{Elementary Properties of Minors}
We summarize the constructions of the preceding sections below for
easy reference:

\begin{pro}\label{proposition: minor total}
  Let $H$ be a hyperfield, $E$ a finite set, $r$ a positive integer,
  and $M$ be a matroid over $H$ of rank $r$ with circuits $\calC$ and
  a Grassmann-Pl\"ucker function $\varphi$. Let $S$ be a subset of
  $E$.
  \begin{enumerate}
  \item%\label{circuit-rest}
    The restriction $M|S$ is given by $H$-circuits:
    \[
      \calC_S = \set{X|_{S}} {X\in\calC\textrm{ and }\supp(X)\subseteq
        S}.
    \]
  \item%\label{gpf-rest}
    The restriction $M|S$ is obtained by fixing an ordered base
    $\mathbf{B}=(b_1,b_2,\cdots,b_k)$ of the underlying matroid
    $M_\varphi/(E\setminus S)$ and defining:
    \[
      \varphi^{\mathbf{B}}:S^{r-k}\longrightarrow H, \quad
      \mathbf{x}\mapsto\varphi(\mathbf{x},\mathbf{B}).
    \]
    In particular, the $H$-matroid $M|S$ is determined by the
    $H^\times$-class $[\varphi^{\mathbf{B}}]$ of any such
    $\mathbf{B}$.
  \item%\label{circuit-delete}
    The deletion $M\setminus S$ is given by $H$-circuits:
    \[
      \calC|(E\setminus S)=\set{X|_{E\setminus S}}{X\in\calC\textrm{
          and }S\cap\supp(X)=\emptyset}.
    \]
  \item%\label{gpf-delete}
    The deletion $M\setminus S$ is obtained by fixing an ordered base
    $\mathbf{B}=(b_1,b_2,\cdots,b_{r-k})$ of $M_\varphi/S$ and
    defining:
    \[
      \varphi^{\mathbf{B}}:(E\setminus S)^k\longrightarrow H, \quad
      \mathbf{x}\mapsto\varphi(\mathbf{x},\mathbf{B}).
    \]
    In particular, the $H$-matroid $M\setminus S$ is determined by the
    $H^\times$-class $[\varphi^{\mathbf{B}}]$ of any $\mathbf{B}$.
  \item%\label{circuit-contr}
    The contraction $M/S$ is given by $H$-circuits
    $\calC=(\calC^*|(E\setminus S))^*$. More explicitly,
    \[
      \calC'' =\set{Z\in H^{E\setminus S}\setminus\{\mathbf{0}\}}
      {\begin{matrix} Z\perp X|_{E\setminus S} \text{ for all }X\in
          H^E\setminus\{\mathbf{0}\}
          \\
          \text{ with }\supp(X)\cap S=\emptyset \text{ and }X\perp Y
          \text{ for all }Y\in\calC
        \end{matrix}
      }
    \]
  \item%\label{gpf-contr}
    The contraction $M/S$ is given by the class of Grassmann-Pl\"ucker
    functions $((\varphi^*)_{E\setminus S})^*$.  More explicitly, let
    $\mathbf{B}=(b_1,\cdots,b_k)$ be an ordered basis of
    $M_\varphi|S$. A representative of the $H^\times$-orbit of
    Grassmann-Pl\"ucker functions determining $M/S$ is given by
    \[
      \varphi'':(E\setminus S)^{r-k}\longrightarrow H, \quad
      \mathbf{x}\mapsto\varphi(\mathbf{B},\mathbf{x}).
    \]
  \end{enumerate}
\end{pro}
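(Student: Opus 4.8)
Proposition \ref{proposition: minor total} is a consolidation of the constructions carried out in the previous three subsections, so the proof will consist of matching each of its six clauses to a statement already established and remarking that nothing further is needed. I would organize the write-up as four short bundles: restriction (clauses (1)--(2)), deletion (clauses (3)--(4)), contraction (clauses (5)--(6)), and a closing remark on what is genuinely new.

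First I would dispatch the restriction and deletion clauses. Clauses (1) and (2) are literally Proposition \ref{restrict} parts \eqref{circuit-rest} and \eqref{gpf-rest}, proved in Propositions \ref{circuit-restriction} and \ref{gpf-restriction}; in particular $M|S$ is a well-defined $H$-matroid of the same type (weak or strong) as $M$, with underlying matroid $M_\varphi|S$. For clauses (3) and (4): by Definition \ref{def: deletion and contraction} we have $M\setminus S := M|(E\setminus S)$, so these are just Proposition \ref{restrict} specialized to the subset $E\setminus S$, i.e.\ Proposition \ref{deletion circuit} parts \eqref{circuit-delete} and \eqref{gpf-delete}, and there is nothing to check beyond substituting $E\setminus S$ for $S$.

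Next I would handle contraction, clauses (5) and (6). Unwinding Definition \ref{def: deletion and contraction}, $M/S := (M^*\setminus S)^*$, so one chains the duality cryptomorphism for $H$-matroids --- on circuits $\calC\mapsto\calC^\perp$ taken with minimal support, on Grassmann--Pl\"ucker functions $\varphi\mapsto\varphi^*$ --- with the deletion formulas just recorded. The resulting circuit identity $\calC'' = (\calC^*|(E\setminus S))^*$ together with its unravelling into the displayed perpendicularity description is Proposition \ref{contraction}\eqref{circuit-contr}, and the explicit representative $\mathbf{x}\mapsto\varphi(\mathbf{B},\mathbf{x})$ is Proposition \ref{contraction}\eqref{gpf-contr}. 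Since duality, deletion, and restriction all preserve the weak/strong dichotomy and all commute with the push-forward to $\mathbb{K}$, so does contraction, and its underlying ordinary matroid is $M_\varphi/S$.

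The one step with genuine content --- everything else being a direct quotation --- is the verification, already carried out inside the proof of Proposition \ref{contraction}, that the two cryptomorphic descriptions of $M/S$ actually agree: the fundamental $H$-circuit $\tilde{C}_{A\cup B,e}$ of $M$ restricts to the fundamental $H$-circuit $C_{A,e}$ of $M/S$, which pins down $\varphi''$ as the Grassmann--Pl\"ucker function induced by $\calC''$, and both are independent of the auxiliary choices (the ordered base $\mathbf{B}$ of $M_\varphi|S$ and the total order on $E$ used to define the duality, which only rescales $\varphi''$ by a unit of $H$). I expect that compatibility of the two contraction formulas to be the only place where a short argument, rather than a citation of the earlier propositions, is needed.
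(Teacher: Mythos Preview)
Your proposal is correct and matches the paper's treatment exactly: Proposition \ref{proposition: minor total} is stated in the paper as a summary ``for easy reference'' with no separate proof, and you have correctly identified that clauses (1)--(2) are Proposition \ref{restrict}, clauses (3)--(4) are Proposition \ref{deletion circuit}, and clauses (5)--(6) are Proposition \ref{contraction}. Your observation that the only substantive step---the cryptomorphic agreement of the two contraction descriptions---was already handled inside the proof of Proposition \ref{contraction} is also accurate.
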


Using the constructions above, one can easily verify that the
following properties hold (cf.\ the properties of minors in ordinary
matroids):
\begin{cor}\label{cor: H-matroid minor properties}
  Let $M$ be an $H$-matroid on $E$ with $S,T\subseteq E$ disjoint. We
  have
  \begin{enumerate}
  \item\label{triv} $M/\emptyset=M=M\setminus\emptyset$
  \item\label{deldel} $(M\setminus S)\setminus T=M\setminus(S\cup T)$
  \item\label{concon} $(M/S)/T=M/(S\cup T)$
  \item\label{delcon} $(M\setminus S)/T=(M/T)\setminus S$
  \end{enumerate}
\end{cor}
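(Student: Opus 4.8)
The strategy is to obtain (1)--(3) as formal consequences of the circuit description of restriction from Proposition~\ref{proposition: minor total} together with the Baker--Bowler double-duality identity $M^{**}=M$, and to settle (4) --- the one item these do not reach --- with Grassmann--Pl\"ucker functions.

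I would first record three preliminary facts. \emph{(i)} For any $H$-matroid $N$ on ground set $E_N$ with circuits $\calC$ one has $N\setminus\emptyset=N|E_N=N$, since $\calC|E_N=\{X|_{E_N}:X\in\calC,\ \supp(X)\subseteq E_N\}=\calC$. \emph{(ii)} Restriction is transitive: if $B\subseteq A\subseteq E$ then $(M|A)|B=M|B$, because an element of $\calC|A$ is $X|_A$ with $\supp(X)\subseteq A$, so $\supp(X|_A)=\supp(X)$ and $(X|_A)|_B=X|_B$, and running the condition $\supp(X)\subseteq B$ through both sides gives $(\calC|A)|B=\{X|_B:X\in\calC,\ \supp(X)\subseteq B\}=\calC|B$. \emph{(iii)} From $M/S:=(M^*\setminus S)^*$ and $M^{**}=M$ one reads off the dual-minor identities $(M\setminus S)^*=M^*/S$ and $(M/S)^*=M^*\setminus S$.

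Then (1)--(3) follow quickly. For (1): $M\setminus\emptyset=M$ is \emph{(i)}, and $M/\emptyset=(M^*\setminus\emptyset)^*=(M^*)^*=M$ by \emph{(i)} applied to $M^*$ and $M^{**}=M$. For (2), using $S\cap T=\emptyset$ and \emph{(ii)}:
\[
(M\setminus S)\setminus T=(M|(E\setminus S))|((E\setminus S)\setminus T)=M|(E\setminus(S\cup T))=M\setminus(S\cup T).
\]
For (3), dualise and apply (2) to $M^*$:
\[
(M/S)/T=((M/S)^*\setminus T)^*=((M^*\setminus S)\setminus T)^*=(M^*\setminus(S\cup T))^*=M/(S\cup T).
\]

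The remaining identity (4) is the crux, and it is not purely formal: dualising $(M\setminus S)/T$ against $(M/T)\setminus S$ via \emph{(iii)} merely turns (4) for $M$ into (4) for $M^*$ with $S$ and $T$ interchanged. Instead I would fix a Grassmann--Pl\"ucker function $\varphi$ representing $M$ and use the descriptions of deletion and contraction in Propositions~\ref{deletion circuit} and \ref{contraction}: contracting $S$ replaces $\varphi$ by the representative $\mathbf{x}\mapsto\varphi(\mathbf{B}_S,\mathbf{x})$ for a fixed ordered basis $\mathbf{B}_S$ of the ordinary matroid $M_\varphi|S$, and deleting $T$ replaces $\varphi$ by $\mathbf{x}\mapsto\varphi(\mathbf{x},\mathbf{B}_T)$ for a fixed ordered basis $\mathbf{B}_T$ of a suitable ordinary minor of $M_\varphi$ supported on $T$. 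Carrying out the two orders of operation, one computes that $(M/S)\setminus T$ is represented by $\mathbf{x}\mapsto\varphi(\mathbf{B}_S,\mathbf{x},\mathbf{B}_T)$ on $E\setminus(S\cup T)$ and $(M\setminus T)/S$ by a representative of exactly the same shape; the essential point is that the ordered bases appearing at each stage may be taken to be the \emph{same} tuples $\mathbf{B}_S$ (elements of $S$) and $\mathbf{B}_T$ (elements of $T$), which is where the classical commutation and transitivity of deletion and contraction for ordinary matroids enter, to identify the underlying matroids and their bases along the way. As the two representatives are then literally the same function $E\setminus(S\cup T)\to H$, they determine the same $H^\times$-orbit, so $(M/S)\setminus T=(M\setminus T)/S$; interchanging $S$ and $T$ gives (4). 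The only fussy part is the bookkeeping of which coordinate slots $\mathbf{B}_S$ and $\mathbf{B}_T$ occupy and the check that the arities sum to the rank of $M$.
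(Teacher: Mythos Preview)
Your argument is correct, and it is essentially the mirror image of the paper's. The paper disposes of (1)--(3) via Grassmann--Pl\"ucker representatives and handles (4) by ``calculating the $H$-circuits''; you do the reverse, obtaining (1)--(3) from the circuit description of restriction together with $M^{**}=M$, and settling (4) with Grassmann--Pl\"ucker functions. Both routes work.

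What each approach buys: your treatment of (1)--(3) is cleaner, since transitivity of restriction is a one-line computation on $H$-circuits and then (3) follows by dualising (2). Conversely, your GPF verification of (4) is more transparent than the paper's circuit claim, because the $H$-circuit description of $M/T$ in Proposition~\ref{contraction} goes through a double $\perp$ and so comparing $(M\setminus S)/T$ with $(M/T)\setminus S$ on circuits is not entirely immediate. Your observation that (4) is \emph{not} a formal consequence of (1)--(3) plus duality---dualising (4) for $M$ only returns (4) for $M^*$ with $S$ and $T$ interchanged---is a nice point the paper leaves implicit. The one place your write-up could be tightened is the phrase ``a suitable ordinary minor of $M_\varphi$ supported on $T$'': it would be better to name it. Tracing through, in both orders the tuple $\mathbf{B}_S$ is an ordered basis of $M_\varphi|S$ and $\mathbf{B}_T$ is an ordered basis of $M_\varphi/(E\setminus T)$; the classical identities $(M_\varphi\setminus T)|S=M_\varphi|S$ and $(M_\varphi/S)/(E\setminus(S\cup T))=M_\varphi/(E\setminus T)$ are precisely what let you reuse the same $\mathbf{B}_S$ and $\mathbf{B}_T$ on both sides, yielding the common representative $\mathbf{x}\mapsto\varphi(\mathbf{B}_S,\mathbf{x},\mathbf{B}_T)$.
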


\begin{proof}
  The proof of parts (1)-(3) is clear by choosing an appropriate
  Grassmann-Pl\"ucker function to represent both sides of the
  equalities using our characterization in Proposition
  \ref{proposition: minor total}. To see (4), one can calculate the
  $H$-circuits of these and obtain that they are equal.
\end{proof}

Finally, we have the following corollary stating that restriction,
deletion, and contraction commute with pushforwards:

\begin{cor}\label{cor: operations commutes with pushforward}
  The following all hold:
  \begin{enumerate}
  \item The pushforward of a restriction is the restriction of the
    pushforward.
  \item The pushforward of a deletion is the deletion of the
    pushforward.
  \item The pushforward of a contraction is the contraction of the
    pushforward.
  \end{enumerate}
\end{cor}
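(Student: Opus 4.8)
The plan is to carry out everything on the level of Grassmann--Pl\"ucker functions, using the formulas assembled in Proposition \ref{proposition: minor total} together with the fact that for a hyperfield homomorphism $f\colon H\to H'$ the pushforward of a matroid $M=[\varphi]$ over $H$ is represented by post-composition, $f_*M=[f\circ\varphi]$. Two elementary observations do all the work. First, $f$ restricts to a group homomorphism $H^\times\to (H')^\times$: if $a\in H^\times$ then $f(a)f(a^{-1})=f(1)=1\neq 0$, so $f(a)\neq 0$; hence $\varphi(\mathbf{b})\neq 0_H$ if and only if $(f\circ\varphi)(\mathbf{b})\neq 0_{H'}$, and therefore $M_{f\circ\varphi}=M_\varphi$ as ordinary matroids. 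So every minor of the underlying matroid, every fundamental circuit, and every ordered base occurring in the formulas of Proposition \ref{proposition: minor total} is literally the same datum for $\varphi$ as for $f\circ\varphi$. Second, from $0=f(0)\in f\bigl(1+(-1)\bigr)\subseteq f(1)+f(-1)$ and uniqueness of additive inverses we get $f(-1)=-1$, so $f$ commutes with multiplication by $\pm 1$.

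For the restriction statement I would fix $S\subseteq E$ and an ordered base $\mathbf{B}$ of $M_\varphi/(E\setminus S)=M_{f\circ\varphi}/(E\setminus S)$. By Proposition \ref{proposition: minor total}(2), $M|S$ is represented by $\varphi^{\mathbf{B}}(\mathbf{x})=\varphi(\mathbf{x},\mathbf{B})$ and $(f_*M)|S$ by $(f\circ\varphi)^{\mathbf{B}}(\mathbf{x})=(f\circ\varphi)(\mathbf{x},\mathbf{B})=f\bigl(\varphi(\mathbf{x},\mathbf{B})\bigr)=f\bigl(\varphi^{\mathbf{B}}(\mathbf{x})\bigr)$. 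Thus $(f\circ\varphi)^{\mathbf{B}}=f\circ\varphi^{\mathbf{B}}$, which represents $f_*(M|S)$; hence $(f_*M)|S=f_*(M|S)$. Since $M\setminus S:=M|(E\setminus S)$, applying this with the subset $E\setminus S$ immediately yields $(f_*M)\setminus S=f_*(M\setminus S)$, which is the deletion statement.

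For contraction I would first check that pushforward commutes with duality. Fixing a total order $\leq$ on $E$ and using the defining formula $\varphi^*(\mathbf{B})=\sgn_\leq(\mathbf{B},\mathbf{E\setminus B})\,\varphi(\mathbf{E\setminus B})$, post-composing with $f$ and invoking that $f$ commutes with signs and with products gives $(f\circ\varphi)^*=f\circ\varphi^*$, i.e.\ $(f_*M)^*=f_*(M^*)$ (this uses again that the underlying matroid, hence the cobases over which the formula is stated, is unchanged). Combining this with the deletion case already in hand,
\[
(f_*M)/S=\bigl((f_*M)^*\setminus S\bigr)^*=\bigl(f_*(M^*)\setminus S\bigr)^*=\bigl(f_*(M^*\setminus S)\bigr)^*=f_*\bigl((M^*\setminus S)^*\bigr)=f_*(M/S).
\]

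The main (and really only) step with any content is the compatibility of pushforward with duality together with the underlying-matroid bookkeeping that lets one pick the \emph{same} ordered bases on the two sides of each identity; everything else is a one-line substitution. I would deliberately avoid arguing the contraction case from the circuit description in Proposition \ref{proposition: minor total}(5), since that would force a verification that pushforward is compatible with the orthogonality/cocircuit construction, which is precisely the delicate point the Grassmann--Pl\"ucker route circumvents. If one wants the corollary to literally recover Proposition \ref{circuit-restriction}(2) as the case $H'=\mathbb{K}$, it is also worth remarking in passing that $f_*M$ is a matroid over $H'$ of the same type (weak or strong) as $M$, which follows by applying $f$ to the hyper-sum memberships in Axioms (WG)/(SG).
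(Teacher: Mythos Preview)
Your proof is correct and follows the same approach as the paper, which simply says the statements are ``trivially verified on Grassmann--Pl\"ucker functions''; you have written out exactly that verification. One small remark: for contraction you detour through duality, but Proposition~\ref{proposition: minor total}(6) already gives the direct formula $\varphi''(\mathbf{x})=\varphi(\mathbf{B},\mathbf{x})$ with $\mathbf{B}$ an ordered basis of $M_\varphi|S$, so the same one-line substitution as in your restriction case (with $\mathbf{B}$ on the left rather than the right) would have sufficed and avoided the extra step of checking $(f\circ\varphi)^*=f\circ\varphi^*$.
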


\begin{proof}
  One trivially verifies that the statements hold on
  Grassmann-Pl\"ucker functions.
\end{proof}

\begin{myeg}
  Recall that the Krasner hyperfield $\mathbb{K}$ is the final object
  in the category of hyperfields. Let $H$ be a hyperfield and
  $f:H \to \mathbb{K}$ be the canonical map. For any $H$-matorid $M$,
  the pushforward $f_*M$ is just the underlying matroid of $M$. In
  this case, one can clearly see Corollary \ref{cor: operations
    commutes with pushforward}.
\end{myeg}

\subsection{Direct sums of Matroids over Hyperfields}\label{subsection: direct sum}
For matroids over hyperfields, we provide two cryptomorphic
definitions (Grassmann-Pl\"ucker functions and circuits) for direct
sum. Let $M$ and $N$ be $H$-matroids. To state a precise formula for a
sum of Grassmann-Pl\"ucker functions, we will need some additional
notation.  Fix a total order $\leq$ on $E_M\sqcup E_N$ such that $x<y$
whenever $x\in E_M$ and $y\in E_N$.  Now for every
$\mathbf{x}\in(E_M\sqcup E_N)^{r_M+r_N}$ either $\mathbf{x}$ has
exactly $r_M$ components in $E_M$ or not. If so, we let
$\sigma_{\mathbf{x}}$ denote the unique permutation of $[r_M+r_N]$
such that $\sigma_{\mathbf{x}}\cdot\mathbf{x}$ is monotone increasing
with respect to $\leq$.  Now, we have the following:

\begin{pro} \label{proposition: direct sum of GPF} Let $H$ be a
  hyperfield and $M$ (resp.~$N$) be $H$-matroids of rank $r_M$
  (resp.~$r_N$) given by a Grassmann-Pl\"ucker function $\varphi_M$
  (resp.~$\varphi_N$).  The function $\varphi_M\oplus\varphi_N$,
  defined by the following formula \eqref{GPF formula} is a weak-type
  Grassmann-Pl\"ucker function on $(E_M\sqcup E_N)^{r_M+r_N}$:
  \begin{align*}
  \varphi_M\oplus\varphi_N
    &:(E_M\sqcup E_N)^{r_M+r_N}\to H, 
    \\
    &\mathbf{x}\mapsto
      \begin{cases} \tag{a}\label{GPF formula}
        0,\quad \textrm{if }\mathbf{x}
        \textrm{ does not have precisely }r_M
        \textrm{ components in }E_M
        \\
        \sgn(\sigma_{\mathbf{x}})
        \varphi_M((\sigma_{\mathbf{x}}\cdot \mathbf{x})|_{[r_M]})
        \varphi_N((\sigma_{\mathbf{x}}\cdot\mathbf{x})|_{[r_M+r_N]\setminus[r_M]}), \quad 
        \textrm{otherwise.}
      \end{cases}
  \end{align*}
  Furthermore, $\varphi_M\oplus\varphi_N$ is of strong-type precisely
  when both $\varphi_M$ and $\varphi_N$ are of strong-type. Moreover,
  the $H^\times$-class of $\varphi_M\oplus\varphi_N$ depends only on
  $M$ and $N$.
\end{pro}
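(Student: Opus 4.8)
The plan is to reduce every Grassmann--Pl\"ucker axiom for $\varphi_M\oplus\varphi_N$ to the corresponding axiom for $\varphi_M$ and for $\varphi_N$, exploiting that every element of $E_M$ precedes every element of $E_N$ in the chosen order. The workhorse is the following \emph{block formula}: for arbitrary tuples $\mathbf{z}_M\in E_M^{r_M}$ and $\mathbf{z}_N\in E_N^{r_N}$, writing $(\mathbf{z}_M,\mathbf{z}_N)$ for concatenation,
\[
  (\varphi_M\oplus\varphi_N)(\mathbf{z}_M,\mathbf{z}_N)=\varphi_M(\mathbf{z}_M)\,\varphi_N(\mathbf{z}_N).
\]
Indeed, sorting $(\mathbf{z}_M,\mathbf{z}_N)$ never interchanges an $E_M$-entry with an $E_N$-entry, so the sorting permutation splits as a product of a permutation of the first $r_M$ slots and a permutation of the last $r_N$ slots; applying (G2) for $\varphi_M$ and for $\varphi_N$ and cancelling the two resulting sign squares gives the formula. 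Once we know $\varphi_M\oplus\varphi_N$ is $H$-alternating, the block formula lets us evaluate it on any admissible input by first permuting its $E_M$-entries to the front, and since $\varphi_M,\varphi_N$ are themselves alternating we may freely slide the distinguished coordinates of (WG)/(SG) across a block at a controlled sign.

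First I would check that $\varphi_M\oplus\varphi_N$ is a nontrivial $H$-alternating function. Nontriviality (G1) follows by concatenating ordered bases of $M_{\varphi_M}$ and $M_{\varphi_N}$. For (G2), the property ``exactly $r_M$ entries in $E_M$'' is $\Sigma_{r_M+r_N}$-invariant, and $\sigma_{\tau\cdot\mathbf{x}}$ and $\sigma_{\mathbf{x}}$ differ by $\tau$, so $\sgn(\sigma_{\tau\cdot\mathbf{x}})=\sgn(\tau)\sgn(\sigma_{\mathbf{x}})$ while the $\varphi_M$- and $\varphi_N$-factors are unchanged. For (G3), a repeated entry either makes the defining formula return $0$ outright (wrong count of $E_M$-entries) or, after sorting, forces $\varphi_M$ or $\varphi_N$ to vanish by its own (G3). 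For (WG), fix $a,b,c,d$ and $\mathbf{x}\in(E_M\sqcup E_N)^{r_M+r_N-2}$; replacing $\mathbf{x}$ by $\tau\cdot\mathbf{x}$ multiplies each of the three products by $\sgn(\tau)^2=1$, so by (G2) I may assume $\mathbf{x}=(\mathbf{x}_M,\mathbf{x}_N)$ lists its $E_M$-entries first and its $E_N$-entries last. Let $p=\#(\{a,b,c,d\}\cap E_M)$ and let $m$ be the length of $\mathbf{x}_M$. A product $\varphi(u,v,\mathbf{x})\,\varphi(w,z,\mathbf{x})$ in the identity is nonzero only when each of $\{u,v\},\{w,z\}$ contributes exactly $r_M-m$ elements of $E_M$, and since these pairs partition $\{a,b,c,d\}$ this forces $p=2(r_M-m)$. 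Hence if $p$ is odd, or $p$ is even with $m\neq r_M-p/2$, all three terms vanish and the containment is trivial. If $p\in\{0,4\}$ (with $m$ correspondingly $r_M$ or $r_M-2$), sliding each pair past $\mathbf{x}_M$ costs an even number of transpositions, and the block formula rewrites the identity as a fixed square in $H$ times the (WG) relation for $\varphi_N$ (resp.\ $\varphi_M$); this contains $0$ since $0\in\lambda\odot S$ whenever $0\in S$. In the remaining case $p=2$, $m=r_M-1$, the unique term whose partition groups both $E_M$-elements into one pair dies, and a short sign computation with the block formula shows the two surviving products are either equal or negatives, so their hyperaddition contains $0$ because $0\in u-u$ in any hyperfield. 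The sign bookkeeping in this split case is the fussiest point of the weak-type verification.

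For the strong-type statement I would use the same tools. Since $\varphi=\varphi_M\oplus\varphi_N$ is alternating, the left-hand sum of (SG) is itself alternating in $\mathbf{x}$ (a direct check on adjacent transpositions), so it suffices to treat strictly increasing $\mathbf{x}\in(E_M\sqcup E_N)^{r_M+r_N+1}$ and $\mathbf{y}\in(E_M\sqcup E_N)^{r_M+r_N-1}$; write them in block form as $\mathbf{x}=(\mathbf{x}_M,\mathbf{x}_N)$, $\mathbf{y}=(\mathbf{y}_M,\mathbf{y}_N)$ and let $s,t$ be the lengths of $\mathbf{x}_M,\mathbf{y}_M$. Inspecting when the two $(\varphi_M\oplus\varphi_N)$-values in a summand of (SG) are both nonzero shows that unless $(s,t)=(r_M+1,r_M-1)$ or $(s,t)=(r_M,r_M)$ every summand vanishes; in the first case only the indices $k$ with $\mathbf{x}(k)\in E_M$ survive and, by the block formula, the whole (SG) sum equals $\varphi_N(\mathbf{x}_N)\,\varphi_N(\mathbf{y}_N)$ times the (SG) sum for $\varphi_M$ at $(\mathbf{x}_M,\mathbf{y}_M)$; in the second case it equals, symmetrically, $\varphi_M(\mathbf{x}_M)\,\varphi_M(\mathbf{y}_M)$ times the (SG) sum for $\varphi_N$. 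Thus if both $\varphi_M$ and $\varphi_N$ are of strong-type, so is $\varphi_M\oplus\varphi_N$. Conversely, if $\varphi_M$ (say) violates (SG) at some $(\mathbf{x}_M^0,\mathbf{y}_M^0)$, choose $\mathbf{x}_N^0,\mathbf{y}_N^0$ to be ordered bases of $M_{\varphi_N}$, so $\varphi_N(\mathbf{x}_N^0),\varphi_N(\mathbf{y}_N^0)\in H^\times$; since multiplication by a nonzero element is a bijection of $H$ fixing $0$, the same computation shows $\varphi_M\oplus\varphi_N$ violates (SG) at the block-form input assembled from these data, so $\varphi_M\oplus\varphi_N$ is not of strong-type. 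Finally, replacing $\varphi_M$ by $a\odot\varphi_M$ and $\varphi_N$ by $b\odot\varphi_N$ with $a,b\in H^\times$ replaces $\varphi_M\oplus\varphi_N$ by $(a\odot b)\odot(\varphi_M\oplus\varphi_N)$ directly from the defining formula, so $[\varphi_M\oplus\varphi_N]$ depends only on $[\varphi_M]=M$ and $[\varphi_N]=N$. The recurring obstacle throughout is keeping the sign conventions straight: checking that the various ``slide a coordinate past a block'' permutations really contribute the signs needed to reassemble the (WG) and (SG) relations of $\varphi_M$ and $\varphi_N$ on the nose.
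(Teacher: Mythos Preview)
Your proposal is correct and follows essentially the same strategy as the paper: verify (G1)--(G3) directly, then reduce each (WG) and (SG) instance for $\varphi_M\oplus\varphi_N$ to the corresponding relation for one of the factors (or to a trivial $u-u\ni 0$ in the split case), and handle the strong-type equivalence and $H^\times$-invariance exactly as the paper does. Your organization is in fact a bit cleaner: the explicit ``block formula'' and the parametrizations by $p=\#(\{a,b,c,d\}\cap E_M)$ for (WG) and by $(s,t)$ for (SG) replace the paper's more ad hoc sign-tracking (the $\delta_a,\dots,\delta_d$ bookkeeping in Case~1), and your (SG) analysis---showing that the sum is nonzero only for $(s,t)\in\{(r_M+1,r_M-1),(r_M,r_M)\}$ and then factors as a unit times a single (SG) relation for $\varphi_M$ or $\varphi_N$---is sharper than the paper's one-line assertion that the relation ``can be rewritten as a constant times an (SG)-relation for $M$ plus a constant times an (SG)-relation for $N$.''
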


\begin{proof}
  Let $\varphi:=\varphi_M\oplus\varphi_N$. We first show that
  $\varphi$ is a nondegenerate $H$-alternating function. Indeed, the
  function $\varphi$ is clearly nontrivial.  To see that
  $\varphi_M\oplus\varphi_N$ is $H$-alternating, we let $\tau$ be an
  arbitrary permutation of $[r_M+r_N]$; note that $\mathbf{x}$ does
  not have precisely $r_M$ components in $E_M$ if and only if
  $\tau\cdot \mathbf{x}$ does not have precisely $r_M$ components in
  $E_M$.  If $\mathbf{x}$ does have precisely $r_M$ components in
  $E_M$, then
  $\sigma_{\tau\cdot\mathbf{x}}=\sigma_{\mathbf{x}}\tau^{-1}$ and so
  the following completes the proof of our claim:
  \begin{align*}
    (\varphi_M\oplus\varphi_N)(\tau\cdot\mathbf{x})
    &=
      \sgn(\sigma_{\tau\cdot\mathbf{x}})
      \varphi_M((\sigma_{\tau\cdot\mathbf{x}}\cdot\tau\cdot\mathbf{x})|_{[r_M]})
      \varphi_N((\sigma_{\tau\cdot\mathbf{x}}\cdot\tau\cdot\mathbf{x})|_{[r_M+r_N]\setminus[r_M]})
    \\
    &=
      \sgn(\sigma_{\mathbf{x}}\cdot\tau^{-1})
      \varphi_M((\sigma_{\mathbf{x}}\cdot\tau^{-1}\cdot\tau\cdot\mathbf{x})|_{[r_M]})
      \varphi_N((\sigma_{\mathbf{x}}\cdot\tau^{-1}\cdot\tau\cdot\mathbf{x})|_{[r_M+r_N]\setminus[r_M]})
    \\
    &=\sgn(\tau)\sgn(\sigma_{\mathbf{x}})
      \varphi_M((\sigma_{\mathbf{x}}\cdot\mathbf{x})|_{[r_M]})
      \varphi_N((\sigma_{\mathbf{x}}\cdot\mathbf{x})|_{[r_M+r_N]\setminus[r_M]})
    \\
    &=\sgn(\tau)(\varphi_M\oplus\varphi_N)(\mathbf{x}).
  \end{align*}
  Next, we prove that if $\varphi_M$ and $\varphi_N$ are weak-type
  Grassmann-Pl\"ucker functions, then $\varphi$ is also a weak-type
  Grassmann-Pl\"ucker function; we should show that the $3$-term
  Grassmann-Pl\"ucker relation (WG) holds. In other words, we have to
  show that $\forall~a,b,c,d \in E_M \sqcup E_N$ and
  $\mathbf{y} \in (E_M \sqcup E_N)^{(r_M+r_N)-2}$,
  \begin{equation} \tag{WG}\label{WG}
    0_H \in
    \varphi(a,b,\mathbf{y})\varphi(c,d,\mathbf{y})
    -\varphi(a,c,\mathbf{y})\varphi(b,d,\mathbf{y})
    +\varphi(a,d,\mathbf{y})\varphi(b,c,\mathbf{y}).
  \end{equation}
  Before proceeding, notice that we may assume $a<b<c<d$ and
  $\mathbf{y}$ is strictly increasing with respect to the ordering
  $\leq$ by alternation and degeneration conditions.  We may further
  assume that none of $a,b,c,d$ are coordinates of $\mathbf{y}$ by
  degeneracy.\newline
  \textit{Case 1}: Suppose all of $a,b,c,d$ belong to the same part of
  $E_M\sqcup E_N$ (either $E_M$ or $E_N$). In this case, we may assume
  that $a,b,c,d \in E_M$. If $\mathbf{y}$ does not have exactly
  $r_M-2$ components in $E_M$, then the relation follows trivially as
  all terms are zero. Otherwise, notice that $\sigma_{\mathbf{x}}$ is
  identity on the components with elements from $E_N$ and $\varphi_N$
  contributes the same constant to the relation in each term (namely
  $\varphi_N(\mathbf{y}_{[r_M+r_N-2]\setminus[r_M-2]})^2$).  Thus we
  may reduce to a consideration of the terms contributed by
  $\varphi_M$, namely
  \begin{align*}
    0_H\in \varphi_M(\rho_{a,b}\cdot (a,b,\mathbf{y}'))\varphi_M(\rho_{c,d}\cdot(c,d,\mathbf{y}')) \qquad \qquad \qquad \qquad \qquad \qquad \qquad \qquad \qquad
    &\\-\varphi_M(\rho_{a,c}\cdot(a,c,\mathbf{y}'))\varphi_M(\rho_{b,d}\cdot(b,d,\mathbf{y}'))
    +\varphi_M(\rho_{a,d}\cdot(a,d,\mathbf{y}'))\varphi_M(\rho_{b,c}\cdot(b,c,\mathbf{y}')), 
  \end{align*}
  where $\rho_{p,q}=\sigma_{(p,q,\mathbf{y})}|_{[r_M]}$ and
  $\mathbf{y}'=\mathbf{y}|_{[r_M]}$.  For each $p\in\{a,b,c,d\}$ let
  \[
    \delta_p:=\#\set{i\in[r_M]}{\mathbf{y}'(i)<p}.
  \]
  Now one permutes the coordinates in the expression in the following
  manner.  First permute $d$ to the front of all terms which contain
  it; this results in a global change of sign $(-1)^{\delta_d+1}$ as
  $d$ must pass over $\delta_d$ coordinates of $\mathbf{y}'$ and the
  coordinates $c$ in the first term, $b$ in the second term, and $a$
  in the third term.  Next permute $c$ to the front of all terms which
  contain it; in each term the sign changes by $(-1)^{\delta_c+1}$ as
  $c$ must pass over $\delta_c$ coordinates of $\mathbf{y}'$ and the
  coordinates $d$ in the first, $a$ in the second, and $b$ in the
  third.  Next permute $b$ to the front of all terms which contain it;
  in each term the sign changes by $(-1)^{\delta_b+1}$ as $b$ must
  pass over $\delta_b$ coordinates of $\mathbf{y}'$ and the
  coordinates $a$ in the first, $d$ in the second, and $c$ in the
  third.  Finally permute $a$ to the front of all terms which contain
  it; in each term the sign changes by $(-1)^{\delta_a+1}$ as $a$ must
  pass over $\delta_a$ coordinates of $\mathbf{y}'$ and the
  coordinates $b$ in the first, $c$ in the second, and $d$ in the
  third.  Hence permuting coordinates in this way we arrive at the
  relation \eqref{WG} for $\varphi_M$ up to a global sign change of
  $(-1)^{\delta_a+\delta_b+\delta_c+\delta_d+4}$.  Hence Axiom
  \eqref{WG} holds in this case. \newline
  \textit{Case 2}: Suppose not all of $a,b,c,d$ belong to the same
  part of $E_M\sqcup E_N$.  By our arrangement of $a<b<c<d$ and our
  choice of order $\leq$ as above we see that $a\in E_M$ and
  $d\in E_N$. If $\mathbf{y}$ does not have precisely $r_M-1$
  components in $E_M$, then the relation holds trivially as all terms
  are zero. Thus, we may further assume that $\mathbf{y}$ has
  precisely $r_M-1$ components in $E_M$ (and thus precisely $r_N-1$
  components in $E_N$). Now if $b$ and $c$ belong to the same part of
  $E_M\sqcup E_N$, again we see that the relation trivially holds as
  all terms are zero. Thus we can reduce to the case that $b\in E_M$
  and $c\in E_N$. We must see the following to conclude our desired
  result:
\begin{equation}
  \varphi(a,c,\mathbf{y})\varphi(b,d,\mathbf{y})=\varphi(a,d,\mathbf{y})\varphi(b,c,\mathbf{y}). 
\end{equation}
We now obtain the relation by a similar trick as in the first case,
``walking'' each of $d,c,b,a$ back to the first coordinate in that
order to obtain the relation by corresponding relations on $\varphi_M$
and $\varphi_N$.\newline
Next, we prove that $\varphi$ is strong-type only if $\varphi_M$ and
$\varphi_N$ are strong-type. Suppose $\varphi_P$ is weak-type but not
strong-type for either $P=M$ or $P=N$. Then there is an
$(r_P+1)$-tuple $\mathbf{x}$ and $(r_P-1)$-tuple $\mathbf{y}$ for
which (SG) is violated. Pick any ordered basis $\mathbf{z}$ of the
other $H$-matroid $P'$; trivially $\varphi_M\oplus\varphi_N$ fails
(SG) for $(\mathbf{x},\mathbf{z})$ and $(\mathbf{y},\mathbf{z})$, as
this reduces to the $\varphi_{P'}(\mathbf{z})$-multiple of the failing
relation for $\varphi_P$; this yields that $\varphi_M\oplus\varphi_N$
is weak-type but not strong-type.  On the other hand, if $\varphi_M$
and $\varphi_N$ are both strong-type, then the relations required by
(SG) for $\varphi_M\oplus\varphi_N$ can be rewritten as a constant
times an (SG)-relation for $M$ plus a constant times an (SG)-relation
for $N$.  This immediately implies that $\varphi_M\oplus\varphi_N$ is
strong-type.\newline
Finally, invariance of the resulting $H^\times$-class is immediate
from the following:
\[
\alpha\varphi_M\oplus\beta\varphi_N=\alpha\beta(\varphi_M\oplus\varphi_N), \quad \forall~\alpha,\beta \in H^\times. \qedhere
\]
 \end{proof}

 \begin{pro}\label{proposition: direction of circuits}
   Let $M$ and $N$ be $H$-matroids of rank $r_M$ and $r_N$ on disjoint
   ground sets $E_M$ and $E_N$ given by $H$-circuits $\calC_M$ and
   $\calC_N$ respectively. Define
  \[
    \calC_M\oplus\calC_N
    =\set{X:E_M\sqcup E_N\to H}
    {\begin{array}{r}
        \text{either both}
        \\
        \text{or both}
      \end{array}
      \begin{matrix}
        X|_{E_M}\in\calC_M\text{ and }X|_{E_N}=\mathbf{0}
        \\
        X|_{E_M}=\mathbf{0}\text{ and }X|_{E_N}\in\calC_N
      \end{matrix}}
  \]
  Then, $\calC_M\oplus\calC_N$ is a set of $H$-circuits. Furthermore,
  $\calC_M\oplus\calC_N$ is of strong-type exactly when both $\calC_M$
  and $\calC_N$ are of strong-type.
\end{pro}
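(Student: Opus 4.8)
The plan is to verify, in order, that $\calC_M\oplus\calC_N$ satisfies the pre-circuit axioms (C1)--(C3), then the weak elimination axiom (WC), and (for the second assertion) the strong axioms (SC1)--(SC2), each time reducing to the corresponding property of $\calC_M$ and of $\calC_N$. The organizing observation is that every $X\in\calC_M\oplus\calC_N$ is \emph{pure}, i.e.\ supported entirely in $E_M$ or entirely in $E_N$ and determined by its single nonzero restriction; hence $\supp(\calC_M\oplus\calC_N)=\supp(\calC_M)\cup\supp(\calC_N)$ as families of subsets of $E_M\sqcup E_N$, which is exactly the circuit family of the ordinary direct sum $M_{\calC_M}\oplus M_{\calC_N}$. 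In particular the underlying matroid of $\calC_M\oplus\calC_N$ is the direct sum of the underlying matroids of $\calC_M$ and $\calC_N$.

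The pre-circuit axioms are immediate: $\mathbf 0\notin\calC_M\oplus\calC_N$ because $\mathbf 0$ fails both clauses of the definition; $H^\times\odot(\calC_M\oplus\calC_N)=\calC_M\oplus\calC_N$ because scaling preserves supports and commutes with restriction to $E_M$ and to $E_N$; and if $\supp(X)\subseteq\supp(Y)$ with $X,Y\in\calC_M\oplus\calC_N$ then $\supp(X)\neq\emptyset$ forces $X,Y$ to be pure of the same colour, whence (C3) for the relevant summand gives $Y=a\odot X$. The essential step is (WC). Let $X,Y\in\calC_M\oplus\calC_N$ with $\{\supp(X),\supp(Y)\}$ a modular pair in $\supp(\calC_M\oplus\calC_N)$ and fix $e\in\supp(X)\cap\supp(Y)$. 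Since this intersection is nonempty, $X$ and $Y$ cannot be supported in different parts (those supports would be disjoint), so both are supported in, say, $E_M$; moreover $\{\supp(X),\supp(Y)\}$ is then also a modular pair in $\supp(\calC_M)$, since $\supp(\calC_M)$ is exactly the subfamily of $\supp(\calC_M\oplus\calC_N)$ of sets contained in $E_M$ and $\supp(X)\cup\supp(Y)\subseteq E_M$. Applying (WC) for $\calC_M$ to $X|_{E_M},Y|_{E_M}$ and $e$ yields $Z'\in\calC_M$ with $Z'(e)=0_H$ and $Z'(f)\in X(e)\odot Y(f)-Y(e)\odot X(f)$ for all $f\in E_M$; extending $Z'$ by $0_H$ on $E_N$ produces $Z\in\calC_M\oplus\calC_N$ with $Z(e)=0_H$, and for $f\in E_N$ the membership $Z(f)=0_H\in X(e)\odot Y(f)-Y(e)\odot X(f)=\{0_H\}$ is clear since $X(f)=Y(f)=0_H$. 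Thus (WC) holds and $\calC_M\oplus\calC_N$ is a weak-type $H$-circuit set whenever $\calC_M$ and $\calC_N$ are.

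For the strong case, suppose first both $\calC_M$ and $\calC_N$ are strong. Axiom (SC1) for $\calC_M\oplus\calC_N$ is the observation that $\supp(\calC_M\oplus\calC_N)$ is the circuit family of $M_{\calC_M}\oplus M_{\calC_N}$. For (SC2), write a basis of the underlying matroid as $B=B_M\sqcup B_N$ with $B_M$ a basis of $M_{\calC_M}$ and $B_N$ a basis of $M_{\calC_N}$; for $e\in E_M\setminus B_M$ the fundamental circuit $Y_{B,e}$ of $e$ with respect to $B$ is a circuit through $e$ inside $B\cup\{e\}$, hence pure and supported in $E_M$, and its restriction to $E_M$ is the fundamental circuit of $e$ with respect to $B_M$ in $\calC_M$; for a pure $X$ supported in $E_M$ the terms indexed by $e\in E_N\setminus B_N$ vanish. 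Therefore the (SC2) relation for $\calC_M\oplus\calC_N$ applied to such $X$ restricts on $E_M$ to the (SC2) relation for $\calC_M$ (and extends back by zero), and symmetrically on $E_N$; hence $\calC_M\oplus\calC_N$ is strong. Conversely, if $\calC_M\oplus\calC_N$ is strong but, say, $\calC_M$ is not, then $\calC_M$ must violate (SC2) for some basis $B_M$ and some $X\in\calC_M$ (it cannot fail (SC1), which holds for any $H$-circuit set); choosing any basis $B_N$ of $M_{\calC_N}$ and viewing $X$ as a pure element of $\calC_M\oplus\calC_N$, the strong relation for $\calC_M\oplus\calC_N$ at $B=B_M\sqcup B_N$ restricts on $E_M$ to precisely the relation $X$ was supposed to fail, a contradiction. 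So $\calC_M\oplus\calC_N$ is strong exactly when both $\calC_M$ and $\calC_N$ are. (Alternatively, once one checks that $\calC_M\oplus\calC_N$ is the circuit set cryptomorphic to the class $[\varphi_M\oplus\varphi_N]$, the strong-type equivalence follows from Proposition \ref{proposition: direct sum of GPF}.)

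The main obstacle is the (WC) step, specifically recognizing that a nonempty intersection of supports forces both circuits into a single part, so that a same-part modular pair of $\calC_M\oplus\calC_N$ descends to a modular pair of the relevant summand and the elimination can be imported and padded with zeros; the remaining ingredient that needs care is the identification of the fundamental circuit of an element $e\in E_M$ with respect to $B$ and with respect to $B_M$, which is the standard behaviour of direct sums of ordinary matroids.
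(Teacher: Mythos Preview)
Your proof is correct and follows the same approach as the paper: verify the pre-circuit axioms directly, observe that $\supp(\calC_M\oplus\calC_N)=\supp(\calC_M)\sqcup\supp(\calC_N)$ so the underlying matroid is the ordinary direct sum, reduce (WC) to the summand containing both supports of a modular pair with nonempty intersection, and reduce (SC2) to a single summand via the decomposition $B=B_M\sqcup B_N$ and the purity of fundamental circuits. Your version is in fact more complete than the paper's, which sketches only the forward direction of the strong-type equivalence; your explicit converse argument (lifting a failure of (SC2) in $\calC_M$ to a failure in $\calC_M\oplus\calC_N$ by adjoining any basis $B_N$) fills that gap.
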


\begin{proof}
  That $\calC_M\oplus\calC_N$ is a set of pre-circuits over $H$
  follows trivially from its definition. Moreover, one can see easily
  see that
\begin{equation}\label{circuits}
\supp(\calC_M\oplus\calC_N)=\supp(\calC_M)\sqcup\supp(\calC_N)
\end{equation}
and hence the underlying matroid of the $H$-matroid determined thereby
is the direct sum of the underlying matroids of the summands. It
follows that every modular pair in $\calC_M\oplus\calC_N$ reduces to
two modular pairs, one in $\calC_M$ and one in $\calC_N$. Thus (WC)
holds by noting that any modular pair with nontrivial intersection is
either a modular pair in $\calC_M$ or a modular pair in $\calC_N$. If
$\calC_M$ and $\calC_N$ are both strong, then by \eqref{circuits},
(SC1) holds. Moreover (SC2) holds by noting that the computation
reduces to a computation in precisely one of $\calC_M$ or $\calC_N$.
\end{proof}

The next result shows that the direct sum of $H$-matroids admits the
cryptomorphic descriptions given in this section.

\begin{pro}\label{pro: direct sum cryp}
  If $M$ is an $H$-matroid given by $H$-circuits $\calC_M$ and
  Grassmann-Pl\"ucker function $\varphi_M$ on $E_M$ and $N$ is an
  $H$-matroid given by $H$-circuits $\calC_N$ and Grassmann-Pl\"ucker
  function $\varphi_N$ on $E_N$ such that $E_M \cap E_N
  =\emptyset$. Then, $\varphi_M\oplus\varphi_N$ and
  $\calC_M\oplus\calC_N$ both determine the same $H$-matroid under
  cryptomorphism. Furthermore, this matroid has underlying matroid the
  direct sum of the underlying matroids of $M$ and $N$.
\end{pro}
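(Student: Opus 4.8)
The plan is to exploit the explicit cryptomorphism \eqref{GPF to circuits} between Grassmann--Pl\"ucker functions and $H$-circuit families. Proposition \ref{proposition: direct sum of GPF} and Proposition \ref{proposition: direction of circuits} already guarantee that $\varphi_M\oplus\varphi_N$ is a Grassmann--Pl\"ucker function and that $\calC_M\oplus\calC_N$ is a family of $H$-circuits, so it suffices to compute the fundamental $H$-circuits attached to $\varphi_M\oplus\varphi_N$ by \eqref{GPF to circuits} and to check that the resulting family equals $\calC_M\oplus\calC_N$.

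\textbf{Step 1 (underlying matroids).} I would first pin down the underlying ordinary matroid on each side; this simultaneously settles the last sentence. By \eqref{GPF formula}, an ordered tuple $\mathbf{B}$ has $(\varphi_M\oplus\varphi_N)(\mathbf{B})\neq0_H$ exactly when its underlying set meets $E_M$ in precisely $r_M$ elements forming a basis of $M_{\varphi_M}$, with the remaining $r_N$ elements forming a basis of $M_{\varphi_N}$; hence $M_{\varphi_M\oplus\varphi_N}=M_{\varphi_M}\oplus M_{\varphi_N}$. On the circuit side, \eqref{circuits} in Proposition \ref{proposition: direction of circuits} records $\supp(\calC_M\oplus\calC_N)=\supp(\calC_M)\sqcup\supp(\calC_N)$, the circuit family of the same ordinary direct sum. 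So both cryptomorphic data have underlying matroid $M_{\varphi_M}\oplus M_{\varphi_N}$, whose circuits are the circuits of $M_{\varphi_M}$ together with those of $M_{\varphi_N}$, regarded as subsets of $E_M\sqcup E_N$; in particular the $H$-circuits on both sides are indexed (up to $H^\times$) by the same family.

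\textbf{Step 2 (the key computation).} Fix a basis $B=B_M\sqcup B_N$ of $M_{\varphi_M}\oplus M_{\varphi_N}$ and $e\notin B$; say $e\in E_M$. I would take the ordered basis $\mathbf{B}$ to be $\leq$-monotone: since $\leq$ places all of $E_M$ before $E_N$ and $B$ has exactly $r_M$ elements in $E_M$, the first $r_M$ coordinates of $\mathbf{B}$ list $B_M$ in $\leq$-order (write this $\mathbf{B}_M$) and the last $r_N$ list $B_N$ (write $\mathbf{B}_N$), so $\sigma_{\mathbf{B}}=\id$ and $(\varphi_M\oplus\varphi_N)(\mathbf{B})=\varphi_M(\mathbf{B}_M)\varphi_N(\mathbf{B}_N)$. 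In the ordinary matroid, the fundamental circuit of $e$ by $B$ is the unique circuit inside $(B_M\cup\{e\})\sqcup B_N$; as $B_N$ is independent in $M_{\varphi_N}$ it lies in $E_M$ and equals the fundamental circuit $C_{B_M,e}$ in $M_{\varphi_M}$, so the fundamental $H$-circuit $X=X_{\mathbf{B},e}$ of $\varphi_M\oplus\varphi_N$ satisfies $X|_{E_N}=\mathbf{0}$. For an index $i\le r_M$ the tuple $(e,\mathbf{B}|_{[r]\setminus\{i\}})$ keeps its $E_M$-coordinates in the first $r_M$ positions and its $\leq$-sorted $E_N$-block in the last $r_N$ positions, so the sorting permutation fixes that block; combining the degeneracy/alternation axioms of $\varphi_M\oplus\varphi_N$ with alternation of $\varphi_M$ gives
\[
(\varphi_M\oplus\varphi_N)(e,\mathbf{B}|_{[r]\setminus\{i\}})=\varphi_M(e,\mathbf{B}_M|_{[r_M]\setminus\{i\}})\,\varphi_N(\mathbf{B}_N).
\]
Substituting into \eqref{GPF to circuits} and cancelling the common factor $\varphi_N(\mathbf{B}_N)\in H^\times$ yields, for all $i\le r_M$,
\[
X(\mathbf{B}(i))X(e)^{-1}=(-1)^i\,\varphi_M(e,\mathbf{B}_M|_{[r_M]\setminus\{i\}})\,\varphi_M(\mathbf{B}_M)^{-1},
\]
which is exactly relation \eqref{GPF to circuits} for the fundamental $H$-circuit of $e$ by $B_M$ with respect to $\varphi_M$. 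Hence $X|_{E_M}$ is an $H^\times$-multiple of an element of $\calC_M$, so $X\in\calC_M\oplus\calC_N$. The case $e\in E_N$ is entirely analogous: the global sign $(-1)^{r_M}$ incurred when $e$ is moved past the $E_M$-block is cancelled by the shift of the index into the second block, so one again lands on the defining relation, now for $\varphi_N$.

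\textbf{Step 3 (reverse inclusion and conclusion).} Conversely, any $Y\in\calC_M\oplus\calC_N$ with, say, $Y|_{E_M}\in\calC_M$ is an $H^\times$-multiple of some fundamental $H$-circuit $X_{\mathbf{B}_M,e}$ of $\varphi_M$, since every circuit of an ordinary matroid is fundamental for a suitable basis; extending $B_M$ by any basis $B_N$ of $M_{\varphi_N}$, the computation of Step 2 shows $X_{\mathbf{B}_M,e}$, extended by $0_H$ on $E_N$, is precisely the fundamental $H$-circuit of $\varphi_M\oplus\varphi_N$ for $(B_M\sqcup B_N,e)$. As the construction \eqref{GPF to circuits} is basis-independent up to $H^\times$ (part of the cryptomorphism of \cite{baker2016matroids}), the $H$-circuit family cryptomorphic to $\varphi_M\oplus\varphi_N$ is exactly $\calC_M\oplus\calC_N$; with Step 1 this proves the proposition. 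I expect the main obstacle to be the sign bookkeeping in Step 2: choosing $\mathbf{B}$ to be $\leq$-monotone so that $\sigma_{\mathbf{B}}=\id$ and the $E_N$-block is never permuted is what keeps the $\sgn(\sigma_{\mathbf{x}})$ factors of \eqref{GPF formula} under control; without this normalization one must track the sorting permutations term by term.
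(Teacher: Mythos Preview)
Your argument is correct and follows the same route as the paper: both verify the cryptomorphism by computing the fundamental $H$-circuits $X_{\mathbf{B},e}$ of $\varphi_M\oplus\varphi_N$ via \eqref{GPF to circuits} and observing that, since the ordinary fundamental circuit of $e$ with respect to $B_M\sqcup B_N$ lies entirely in the part ($E_M$ or $E_N$) containing $e$, the relation collapses to the corresponding relation for $\varphi_M$ or $\varphi_N$. The paper's proof is a two-sentence sketch of exactly this reduction, whereas you have spelled out the sign bookkeeping (choosing $\mathbf{B}$ to be $\leq$-monotone so the $E_N$-block is untouched, and noting the $(-1)^{r_M}$ cancellation when $e\in E_N$) and the reverse inclusion explicitly; these details are genuine content that the paper elides.
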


\begin{proof}
  We must verify that $\calC_M\oplus\calC_N$ is cryptomorphically
  determined by $\varphi_M\oplus\varphi_N$. Let $B_M$ and $B_N$ be any
  bases of the underlying matroids of $M$ and $N$,
  respectively. Notice that for all
  $e\in(E_M\sqcup E_N)\setminus(B_M\sqcup B_N)$, the fundamental
  circuit $X_{B_M\cup B_N,e}$ has support contained in $E_M$ or in
  $E_N$. Thus, the cryptomorphism relation required reduces to the
  relation on the fundamental circuit the part contiaining
  $e\in E_M\sqcup E_N$. Hence $\varphi_M\oplus\varphi_N$ and
  $\calC_M\oplus\calC_N$ determine the same $H$-matroid as desired.
\end{proof}

\begin{cor}
  The pushforward of a direct sum of matroids is the direct sum of the
  pushforwards. In other words, direct sum commutes with pushforwards.
\end{cor}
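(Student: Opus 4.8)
The plan is to verify the statement at the level of Grassmann--Pl\"ucker functions, exactly as in the proof of Corollary~\ref{cor: operations commutes with pushforward}. Fix a homomorphism of hyperfields $f\colon H\to H'$ in the sense of Definition~\ref{def: homomorphism of hyperrings}. Recall that if an $H$-matroid $M$ is represented by a Grassmann--Pl\"ucker function $\varphi\colon E^r\to H$, then its pushforward $f_*M$ is the $H'$-matroid represented by $f\circ\varphi$: applying $f$ to the relation (WG) (resp.~(SG)) and using $f(0_H)=0_{H'}$ together with $f(a+b)\subseteq f(a)+f(b)$ (so $f(\sum_k c_k)\subseteq\sum_k f(c_k)$ by induction) shows that $f\circ\varphi$ again satisfies (WG) (resp.~(SG)), and $f\circ\varphi$ is nontrivial since $f$ restricts to a group homomorphism $H^\times\to H'^\times$. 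Consequently, given $H$-matroids $M$ on $E_M$ and $N$ on $E_N$ with $E_M\cap E_N=\emptyset$, represented by $\varphi_M$ and $\varphi_N$, it suffices to prove the identity of functions
\[
  f\circ(\varphi_M\oplus\varphi_N)=(f\circ\varphi_M)\oplus(f\circ\varphi_N),
\]
where \emph{both} direct sums are formed using the same total order $\leq$ on $E_M\sqcup E_N$ and the same permutations $\sigma_{\mathbf{x}}$ as in Proposition~\ref{proposition: direct sum of GPF}.

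The point is that the defining formula~\eqref{GPF formula} is natural in the coefficient hyperfield. For $\mathbf{x}\in(E_M\sqcup E_N)^{r_M+r_N}$, whether $\mathbf{x}$ has precisely $r_M$ components in $E_M$, and the permutation $\sigma_{\mathbf{x}}$, depend only on the ground sets and the chosen order, not on the hyperfield. In the degenerate case both sides take the value $0_{H'}$ because $f(0_H)=0_{H'}$. In the nondegenerate case, write $\mathbf{x}'=(\sigma_{\mathbf{x}}\cdot\mathbf{x})|_{[r_M]}$ and $\mathbf{x}''=(\sigma_{\mathbf{x}}\cdot\mathbf{x})|_{[r_M+r_N]\setminus[r_M]}$; since $f$ is multiplicative and $f(-1)=-1$ --- which follows from $0_{H'}=f(0_H)\in f(1+(-1))\subseteq f(1)+f(-1)=1+f(-1)$ and uniqueness of additive inverses in $H'$, so that $f$ intertwines the sign actions --- we get
\[
  f\bigl(\sgn(\sigma_{\mathbf{x}})\,\varphi_M(\mathbf{x}')\,\varphi_N(\mathbf{x}'')\bigr)=\sgn(\sigma_{\mathbf{x}})\,(f\circ\varphi_M)(\mathbf{x}')\,(f\circ\varphi_N)(\mathbf{x}''),
\]
which is exactly the value of $(f\circ\varphi_M)\oplus(f\circ\varphi_N)$ at $\mathbf{x}$. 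This establishes the displayed identity of functions.

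Passing to $H'^\times$-equivalence classes --- which by Proposition~\ref{proposition: direct sum of GPF} depend only on the matroids over $H'$ --- the identity reads $f_*(M\oplus N)=f_*M\oplus f_*N$. By Proposition~\ref{pro: direct sum cryp} the same $H'$-matroid is equally described by the circuit construction applied to the pushforwards, and its underlying matroid is the direct sum of the underlying matroids of $f_*M$ and $f_*N$ (recovering the special case of pushforward to $\mathbb{K}$). I expect no genuine obstacle here: the only two points deserving a sentence are that $f$ preserves additive inverses, hence the sign action, and that the combinatorial data $(\leq,\sigma_{\mathbf{x}})$ entering the two direct sums are literally identical; both are immediate.
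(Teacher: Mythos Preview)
Your proposal is correct and follows exactly the approach indicated in the paper, which simply states that the identity is ``trivially verified on Grassmann--Pl\"ucker functions.'' You have merely supplied the routine details (multiplicativity of $f$, $f(-1)=-1$, and that the combinatorial data $\sigma_{\mathbf{x}}$ are independent of the hyperfield) that the paper leaves implicit.
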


\begin{proof}
  This is trivially verified on Grassmann-Pl\"ucker functions.
\end{proof}

\begin{myeg}
  The case when we pushforward to the Krasner hyperfield $\mathbb{K}$,
  i.e., taking underlying matroids, is directly proven in terms in
  $H$-circuits in Proposition \ref{pro: direct sum cryp}.
\end{myeg}

\section{Isomorphisms of matroids over hyperfields} \label{section: isomorphism class}

In this section, we introduce a notion of isomorphisms of matroids
over hyperfields which generalizes the definition of isomorphisms of
ordinary matroids.  We will subsequently use this definition to
construct matroid-minor Hopf algebras for matroids over hyperfields in \S
\ref{section: hopf algebra}.

\begin{mydef}[Isomorphism via Grassmann-Pl\"ucker function]\label{def: GPF iso}
  Let $E_1$ and $E_2$ be finite sets, $r$ be a positive integer, and
  $H$ be a hyperfield. Let $M_1$ (resp.~$M_2$) be a matroid on $E_1$
  (resp.~$E_2$) of rank $r$ over $H$ which is represented by a
  Grassmann-Pl\"ucker function $\varphi_1$ (resp.~$\varphi_2$). We say
  that $M_1$ and $M_2$ are \emph{isomorphic} if there is a bijection
  $f:E_1 \to E_2$ and an element $\alpha\in H^\times$ such that the
  following diagram commutes:
  \begin{equation}\label{def: GP is}
    \begin{tikzcd}[column sep=large]
      E_1^r \arrow{r}{\varphi_1} \arrow{d}[swap]{f^r}
      &H \arrow{d}{\odot\alpha}\\
      E_2^r\arrow{r}{\varphi_2} & H
    \end{tikzcd}
  \end{equation}
\end{mydef}

\begin{pro}
  Definition \ref{def: GPF iso} is well-defined.
\end{pro}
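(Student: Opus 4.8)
The plan is to unwind what ``well-defined'' means here: the objects $M_1$ and $M_2$ are $H^\times$-equivalence classes of Grassmann--Pl\"ucker functions, so the content of the proposition is that the relation described in Definition \ref{def: GPF iso} does not depend on the choice of representatives $\varphi_1 \in M_1$ and $\varphi_2 \in M_2$. One also checks that the hypothesis ``$M_1$ and $M_2$ have the same rank $r$'' is representative-independent, which is immediate since rescaling by a unit of $H$ preserves the domain $E^r$ and hence the rank; thus the entire statement to verify is the independence of the bijection-plus-scalar condition from the chosen representatives.

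So I would fix alternative representatives $\varphi_1' = a \odot \varphi_1$ and $\varphi_2' = b \odot \varphi_2$ with $a, b \in H^\times$, and assume that a bijection $f \colon E_1 \to E_2$ and an element $\alpha \in H^\times$ witness the isomorphism for the pair $(\varphi_1, \varphi_2)$, i.e.\ $\varphi_2(f^r(\mathbf{x})) = \alpha \odot \varphi_1(\mathbf{x})$ for every $\mathbf{x} \in E_1^r$. The key step is then the direct computation
\[
  \varphi_2'(f^r(\mathbf{x}))
  = b \odot \varphi_2(f^r(\mathbf{x}))
  = b \odot \alpha \odot \varphi_1(\mathbf{x})
  = (b \odot \alpha \odot a^{-1}) \odot \varphi_1'(\mathbf{x}),
\]
valid for all $\mathbf{x} \in E_1^r$, where I use that $H^\times$ is an abelian group (so $a^{-1}$ exists and the factors commute). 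Hence the same bijection $f$ together with the scalar $\alpha' := b \odot \alpha \odot a^{-1} \in H^\times$ makes the diagram \eqref{def: GP is} commute for $(\varphi_1', \varphi_2')$. Since the roles of the primed and unprimed representatives are symmetric, this shows that whether $M_1$ and $M_2$ are isomorphic is independent of the chosen representatives, which is exactly the assertion.

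I would also note in passing, although it is not strictly required by the statement, that the relation is an equivalence relation on $H$-matroids of a fixed rank: reflexivity uses $f = \id$ and $\alpha = 1$; symmetry uses $f^{-1}$ and $\alpha^{-1}$, which works because $\varphi_2(f^r(\mathbf{x})) = \alpha \odot \varphi_1(\mathbf{x})$ rearranges to $\varphi_1((f^{-1})^r(\mathbf{y})) = \alpha^{-1}\odot\varphi_2(\mathbf{y})$; and transitivity composes the bijections and multiplies the scalars. I do not expect any genuine obstacle: the whole argument is bookkeeping with unit scalars, and the only mild care needed is to keep the bijection fixed while absorbing all of the rescalings into the single corrected scalar $\alpha'$.
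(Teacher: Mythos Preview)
Your proposal is correct and follows essentially the same approach as the paper: pick alternative representatives $\varphi_1'=a\odot\varphi_1$, $\varphi_2'=b\odot\varphi_2$, and absorb the rescalings into a corrected scalar $\alpha'=b\odot\alpha\odot a^{-1}$ while keeping the bijection $f$ fixed. Your additional remarks on the equivalence-relation properties are not in the paper but are harmless extras.
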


\begin{proof}
  Let $\varphi_1'$ and $\varphi_2'$ be different representatives of
  $M_1$ and $M_2$. In other words, there exist
  $\beta,\gamma \in H^\times$ such that
  $\varphi_1'=\beta\odot\varphi_1$ and
  $\varphi_2'=\gamma\odot\varphi_2$. In this case, we have that
  \[
    \gamma^{-1}\odot\varphi_2'\circ f^r=\varphi_2\circ f^r = \alpha
    \odot \varphi_1=(\alpha\odot\beta^{-1} )\odot \varphi_1'
  \]
  It follows that
  $\varphi_2'\circ f^r = (\gamma\odot \alpha \odot \beta^{-1})\odot
  \varphi_1'$ and hence Definition \ref{def: GPF iso} is well-defined.
\end{proof}

\begin{pro}
  Let $H$ and $K$ be hyperfields and $g:H \to K$ be a morphism of
  hyperfields. If $M_1$ and $M_2$ are matroids over $H$ which are
  isomorphic, then the pushforwards $g_*M_1$ and $g_*M_2$ are
  isomorphic as well.
\end{pro}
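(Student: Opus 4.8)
The plan is to exploit functoriality of the pushforward together with the square defining an isomorphism. Recall that the pushforward $g_*M_i$ of the $H$-matroid $M_i$ (represented by a Grassmann-Pl\"ucker function $\varphi_i\colon E_i^r\to H$) is the $K$-matroid represented by the composite $g\circ\varphi_i\colon E_i^r\to K$; this composite is again a Grassmann-Pl\"ucker function over $K$, since composing with a hyperfield morphism preserves the alternating conditions (G1)--(G3) and turns the relation (WG) for $\varphi_i$ into the corresponding relation for $g\circ\varphi_i$ by means of $g(a+b)\subseteq g(a)+g(b)$ and the multiplicativity of $g$.

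First I would record that $g(\alpha)\in K^\times$: as $g$ is a monoid homomorphism for multiplication and $\alpha\in H^\times$, we have $g(\alpha)\odot g(\alpha^{-1})=g(1)=1\neq 0_K$, so $g(\alpha)$ is a unit of $K$. Next, by hypothesis there is a bijection $f\colon E_1\to E_2$ and an element $\alpha\in H^\times$ with $\varphi_2\circ f^r=(\odot\alpha)\circ\varphi_1$. Post-composing with $g$ and using multiplicativity of $g$ gives
\[
  (g\circ\varphi_2)\circ f^r \;=\; g\circ(\varphi_2\circ f^r) \;=\; g\circ\bigl((\odot\alpha)\circ\varphi_1\bigr) \;=\; (\odot\, g(\alpha))\circ(g\circ\varphi_1).
\]
Hence the defining diagram of Definition \ref{def: GPF iso} commutes for the representatives $g\circ\varphi_1$ of $g_*M_1$ and $g\circ\varphi_2$ of $g_*M_2$, with the same bijection $f$ and the unit $g(\alpha)\in K^\times$. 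Therefore $g_*M_1$ and $g_*M_2$ are isomorphic, and independence of the choice of representatives $\varphi_1,\varphi_2$ is guaranteed by the well-definedness of Definition \ref{def: GPF iso} established above.

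There is essentially no obstacle here: the only points needing (routine) care are the verification that $g(\alpha)$ lies in $K^\times$ and the fact that the pushforward is computed by composition on the level of Grassmann-Pl\"ucker functions. If one wishes to avoid the latter, one can argue instead with $H$-circuits --- the circuit set of $g_*M_i$ is obtained from that of $M_i$ by applying $g$ coordinatewise (discarding resulting zero vectors) and then taking minimal supports, and conjugation by $f$ intertwines these circuit sets --- but the Grassmann-Pl\"ucker description is the cleanest.
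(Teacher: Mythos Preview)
Your proof is correct and follows essentially the same route as the paper: represent $M_1,M_2$ by Grassmann--Pl\"ucker functions, post-compose the isomorphism square with $g$, and use multiplicativity of $g$ to obtain $(g\circ\varphi_2)\circ f^r=g(\alpha)\odot(g\circ\varphi_1)$. You simply add a little more care than the paper does (verifying $g(\alpha)\in K^\times$ and noting well-definedness), but the argument is the same.
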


\begin{proof}
  Let $M_1$ (resp.~$M_2)$ be represented by a Grassmann-Pl\"ucker
  function $\varphi_1$ (resp.~$\varphi_2)$. Since $M_1$ and $M_2$ are
  isomorphic, there exist $a \in H^\times$ and a bijection
  $f:E_1 \to E_2$ such that $\varphi_2\circ f^r=a\odot
  \varphi_1$. Notice that the pushforward $g_*M_1$ (resp.~$g_*M_2$) is
  represented by the Grassmann-Pl\"ucker function $g\circ\varphi_1$
  (resp.~$g\circ \varphi_2$), we obtain
  \[
    (g\circ \varphi_2) \circ f^r=g \circ (\varphi_2 \circ f^r)=g\circ
    (a\odot \varphi_1)=g(a)\odot (g\circ \varphi_1).  \qedhere
  \]
\end{proof}

One notes that in the special case $K=\mathbb{K}$, the underlying
matroids of two isomorphic matroids are isomorphic in the classical
sense. Therefore, our definition of isomorphisms generalizes the
definition of isomorphisms of ordinary matroids.

\begin{pro}\label{pro: direcut for isomorphisms classes}
  If $M$ and $M'$ (resp. $N$ and $N'$) are isomorphic $H$-matroids,
  then $M\oplus N$ and $M'\oplus N'$ are isomorphic $H$-matroids.
\end{pro}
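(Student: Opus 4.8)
The plan is to construct the desired isomorphism directly on Grassmann-Pl\"ucker functions, using the explicit direct-sum formula from Proposition \ref{proposition: direct sum of GPF}. Write $M,M'$ on $E_M,E_{M'}$ with Grassmann-Pl\"ucker functions $\varphi_M,\varphi_{M'}$, and $N,N'$ on $E_N,E_{N'}$ with $\varphi_N,\varphi_{N'}$. Let $f\colon E_M\to E_{M'}$ and $\alpha\in H^\times$ witness $M\cong M'$ (so $\varphi_{M'}\circ f^{r_M}=\alpha\odot\varphi_M$), and let $g\colon E_N\to E_{N'}$ and $\beta\in H^\times$ witness $N\cong N'$; note $r_M=r_{M'}$ and $r_N=r_{N'}$, as equality of ranks is built into Definition \ref{def: GPF iso}. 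I would then set $h:=f\sqcup g\colon E_M\sqcup E_N\to E_{M'}\sqcup E_{N'}$ and claim that $h$ together with the scalar $\alpha\odot\beta\in H^\times$ exhibits $M\oplus N\cong M'\oplus N'$.

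To verify the square of Definition \ref{def: GPF iso} for $\varphi_M\oplus\varphi_N$ and $\varphi_{M'}\oplus\varphi_{N'}$, I would fix a total order $\leq$ on $E_M\sqcup E_N$ with $E_M<E_N$ to define $\varphi_M\oplus\varphi_N$, and on $E_{M'}\sqcup E_{N'}$ use the order transported through $h$ (still of the required form, since $h$ maps $E_M$ onto $E_{M'}$) to define $\varphi_{M'}\oplus\varphi_{N'}$. For $\mathbf{x}\in(E_M\sqcup E_N)^{r_M+r_N}$, the tuple $\mathbf{x}$ has exactly $r_M$ coordinates in $E_M$ if and only if $h\cdot\mathbf{x}$ has exactly $r_M$ coordinates in $E_{M'}$, so in the degenerate case both sides vanish. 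Otherwise the transported order forces $\sigma_{h\cdot\mathbf{x}}=\sigma_{\mathbf{x}}$; since applying $h$ coordinatewise commutes with the $\Sigma_{r_M+r_N}$-action, and the first $r_M$ (resp.\ last $r_N$) coordinates of $\sigma_{\mathbf{x}}\cdot\mathbf{x}$ lie in $E_M$ (resp.\ $E_N$), $h$ restricts there to $f^{r_M}$ (resp.\ $g^{r_N}$). Feeding these shorter tuples into the two given squares for $\varphi_M$ and $\varphi_N$ gives
\[
(\varphi_{M'}\oplus\varphi_{N'})(h\cdot\mathbf{x})
=\sgn(\sigma_{\mathbf{x}})\,\bigl(\alpha\odot\varphi_M(\cdots)\bigr)\bigl(\beta\odot\varphi_N(\cdots)\bigr)
=(\alpha\odot\beta)\odot(\varphi_M\oplus\varphi_N)(\mathbf{x}),
\]
so the square commutes.

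To finish I would invoke Proposition \ref{proposition: direct sum of GPF}, which says $\varphi_M\oplus\varphi_N$ and $\varphi_{M'}\oplus\varphi_{N'}$ represent $M\oplus N$ and $M'\oplus N'$ and that the $H^\times$-class of a direct sum is independent of the auxiliary total order; combined with the already-established well-definedness of Definition \ref{def: GPF iso} (independence of the chosen representatives), this yields $M\oplus N\cong M'\oplus N'$. The only genuinely delicate point is the sign factor $\sgn(\sigma_{\mathbf{x}})$ occurring in the direct-sum formula: transporting the order through $h$ is exactly what makes the sign factors on the two sides coincide, and any other pair of admissible orders would differ only by a global sign in $H^\times$ that could instead be folded into the scalar. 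Everything else is routine bookkeeping, and the same argument could alternatively be run through $\calC_M\oplus\calC_N$ using Proposition \ref{proposition: direction of circuits}.
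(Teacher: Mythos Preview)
Your proof is correct and follows essentially the same approach as the paper: take the disjoint union $h=f\sqcup g$ of the two witnessing bijections together with the product scalar $\alpha\odot\beta$, and verify on Grassmann--Pl\"ucker functions that $(\varphi_{M'}\oplus\varphi_{N'})\circ h^{r_M+r_N}=(\alpha\odot\beta)\odot(\varphi_M\oplus\varphi_N)$. The paper compresses this into a three-line chain of equalities (using the identity $\alpha\varphi_M\oplus\beta\varphi_N=\alpha\beta(\varphi_M\oplus\varphi_N)$ established at the end of the proof of Proposition~\ref{proposition: direct sum of GPF}), whereas you are more explicit about the one genuinely delicate point---transporting the total order through $h$ so that $\sigma_{h\cdot\mathbf{x}}=\sigma_{\mathbf{x}}$ and the sign factors match---which the paper leaves implicit.
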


\begin{proof}
  Consider Grassmann-Pl\"ucker functions $\varphi_M$, $\varphi_{M'}$,
  $\varphi_N$, and $\varphi_{N'}$.  By assumption there are bijections
  $f_M:E_M\to E_{M'}$ and $f_N:E_N\to E_N'$ and constants
  $\alpha_M,\alpha_N\in H^\times$ such that
  $\alpha_M\odot\varphi_M=\varphi_{M'}\circ f_M^{r_M}$ and
  $\alpha_N\odot\varphi_N=\varphi_{N'}\circ f_N^{r_N}$.  Let
  $f_M\sqcup f_N:E_M\sqcup E_N\to E_{M'}\sqcup E_{N'}$ denote the
  obvious bijection.  Then, we have
  \begin{align*}
    \alpha_M\alpha_N\odot(\varphi_M\oplus\varphi_N)
    &=(\alpha_M\odot\varphi_M)\oplus(\alpha_N\odot\varphi_N)
    \\
    &=(\varphi_{M'}\circ f_M^{r_M})\oplus(\varphi_{N'}\circ f_N^{r_N})
    \\
    &=(\varphi_{M'}\oplus\varphi_{N'})\circ(f_M\sqcup f_N)^{r_M+r_N}.
      \qedhere
  \end{align*}
 \end{proof}

\begin{rmk}
  Although we stick with Definition \ref{def: GPF iso} in this paper,
  any $\oplus$-congruence relation could be used in place of
  ``isomorphism'' for matroids over hyperfields; indeed, all that we
  will need from our notion of isomorphism is that $M\sim M'$ and
  $N\sim N'$ implies $M\oplus N\sim M'\oplus N'$ to define Hopf
  algebras for matroids over hyperfields.
\end{rmk}

\begin{rmk}
  Our initial definition for isomorphism was as follows: $M_1$ and
  $M_2$ are isomorphic if there is a bijection $f:E_1 \to E_2$ and an
  automorphism $g: H \to H$ such that the following diagram commutes:
  \[
    \begin{tikzcd}[column sep=large]
      E_1^r \arrow{r}{\varphi_1} \arrow{d}[swap]{f^r}
      &H \arrow{d}{g}\\
      E_2^r\arrow{r}{\varphi_2} & H
    \end{tikzcd}
  \]
  This definition was inspired by the classical notion of
  \emph{semilinear maps}, i.e., linear maps up to ``twist'' of scalars
  by the automorphisms of a ground field. Unfortunately, this is not
  well-defined on the level of $H^\times$-equivalence classes and
  hence we use the current definition. Although we do not pursue this
  line of thought in this paper, it seems really interesting to
  investigate a notion of general linear groups over hyperfields. For
  instance, a proper notion of general linear groups over hyperfields
  is needed to study matroid bundles (a combinatorial analogue of
  vector bundles, as in \cite{anderson1999matroid}) for matroids over
  hyperfields.
\end{rmk}

\section{The matroid-minor Hopf algebra associated to a matroid over a hyperfield}\label{section: hopf algebra}

In this section, by appealing to Definition \ref{def: GPF iso},
Propositions \ref{proposition: direct sum of GPF} and
\ref{proposition: direction of circuits}, we generalize the classical
construction of matroid-minor Hopf algebras to the case of matroids
over hyperfields. Let $H$ be a hyperfield. Let $\mathcal{M}$ be a set
of matroids over $H$ which is closed under taking direct sums and
minors. Let $\mathcal{M}_{iso}$ be the set of isomorphisms classes of
elements in $\mathcal{M}$, where the isomorphism class is defined by
Definition \ref{def: GPF iso}. Then, $\mathcal{M}_{iso}$ has a
canonical monoid structure as follows:
\begin{equation}\label{monoid structure of hopf algebra}
  \cdot: \mathcal{M}_{iso} \times \mathcal{M}_{iso} \to \mathcal{M}_{iso}, \quad ([M_1],[M_2]) \mapsto [M_1 \oplus M_2]. 
\end{equation}
Note that \eqref{monoid structure of hopf algebra} is well-defined
thanks to Proposition \ref{pro: direcut for isomorphisms classes} and
the isomorphism class of the \emph{empty matroid} $[\emptyset]$
becomes the identity element. Let $k$ be a field. Then we have the
monoid algebra $k[\Miso]$ over $k$ with the unit map
$\eta:k \to k[\Miso]$ sending $1$ to $[\emptyset]$ and the
multiplication:
\[
\mu:k[\Miso] \otimes_k k[\Miso] \to k[\Miso], \textrm{ generated by } [M_1]\otimes [M_2] \mapsto [M_1\oplus M_2]. 
\]

\begin{pro}
  Let $k$ be a field and $H$ be a hyperfield. Let
  $(\mathcal{M}_{iso},\cdot)$ be the monoid and $k[\mathcal{M}_{iso}]$
  be the monoid algebra over $k$ as above. Then
  $\mathscr{H}:=k[\Miso]$ is a bialgebra with the following maps:
  \begin{itemize}
  \item (Comultiplication)
    \begin{equation}
      \Delta:\mathscr{H} \longrightarrow \mathscr{H}\otimes_k \mathscr{H}, \quad [M] \mapsto \sum_{A \subseteq E} [M\mid_A]\otimes_k [M/A]. 
    \end{equation}
  \item (Counit)
    \begin{equation}
      \varepsilon:\mathscr{H} \longrightarrow k, \quad [M]\mapsto
      \left\{ \begin{array}{ll}
                1 & \textrm{if $E_M=\emptyset$}\\
                0& \textrm{if $E_M\neq \emptyset$},
              \end{array} \right. 
    \end{equation}
  \end{itemize}
  Furthermore, $\mathscr{H}$ is graded and connected and hence has a
  unique Hopf algebra structure.
\end{pro}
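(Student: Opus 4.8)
The plan is to mimic Schmitt's classical construction, with each step justified by the structural results of \S\ref{section: minor} and \S\ref{section: isomorphism class}. First I would check that $\Delta$ and $\varepsilon$ are well-defined $k$-linear maps, i.e.\ that the right-hand sides depend only on the isomorphism class $[M]$. For $\varepsilon$ this is immediate, since $\# E_M$ is an isomorphism invariant. For $\Delta$ one needs that $[M|_A]$ and $[M/A]$ depend only on $[M]$: given an isomorphism $M\to M'$ realized by a bijection $f\colon E_M\to E_{M'}$, the explicit Grassmann--Pl\"ucker and circuit formulas of Proposition \ref{proposition: minor total} show that $f$ restricts to isomorphisms $M|_A\cong M'|_{f(A)}$ and $M/A\cong M'/f(A)$ for every $A\subseteq E_M$; reindexing the sum $\sum_{A\subseteq E_M}$ by $A\mapsto f(A)$ then gives $\Delta[M]=\Delta[M']$ in $\mathscr{H}\otimes_k\mathscr{H}$.

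Next I would verify that $\Delta$ and $\varepsilon$ are algebra maps. Since $\mathscr{H}=k[\Miso]$ is a monoid algebra, it suffices to check that $[M]\mapsto\sum_{A\subseteq E_M}[M|_A]\otimes_k[M/A]$ and $[M]\mapsto\varepsilon([M])$ are homomorphisms from the monoid $(\Miso,\cdot)$ into the multiplicative monoids of $\mathscr{H}\otimes_k\mathscr{H}$ and of $k$. Unitality is clear from $\Delta[\emptyset]=[\emptyset]\otimes[\emptyset]$ and $\varepsilon[\emptyset]=1$. Multiplicativity of $\varepsilon$ holds because $E_{M\oplus N}=E_M\sqcup E_N$ is empty precisely when $E_M$ and $E_N$ both are. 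Multiplicativity of $\Delta$ is the one genuinely structural point: one needs $(M\oplus N)|_A=(M|_{A\cap E_M})\oplus(N|_{A\cap E_N})$ and $(M\oplus N)/A=(M/(A\cap E_M))\oplus(N/(A\cap E_N))$, which follow from the circuit description of $\oplus$ in Proposition \ref{proposition: direction of circuits} together with the circuit/Grassmann--Pl\"ucker descriptions of restriction and contraction in Proposition \ref{proposition: minor total} (for the contraction identity one unwinds the direct-sum formula of Proposition \ref{proposition: direct sum of GPF} against the contraction formula; cryptomorphic consistency is Proposition \ref{pro: direct sum cryp}). Splitting $\sum_{A\subseteq E_M\sqcup E_N}$ as $\sum_{A_M\subseteq E_M}\sum_{A_N\subseteq E_N}$ with $A=A_M\sqcup A_N$ then yields $\Delta[M\oplus N]=\Delta[M]\cdot\Delta[N]$, using Proposition \ref{pro: direcut for isomorphisms classes} to descend to isomorphism classes.

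For the two bialgebra diagrams I would argue as in the classical case. Coassociativity: expanding $(\Delta\otimes\id)\Delta[M]$ and $(\id\otimes\Delta)\Delta[M]$ and invoking the identities $M|_A|_B=M|_B$ for $B\subseteq A$, $(M|_A)/B=(M/B)|_{A\setminus B}$, and $(M/A)/C=M/(A\cup C)$ --- all consequences of Corollary \ref{cor: H-matroid minor properties} --- both composites collapse to
\[
\sum_{B\subseteq A\subseteq E_M}[M|_B]\otimes_k[(M/B)|_{A\setminus B}]\otimes_k[M/A].
\]
The counit axiom is immediate: $\varepsilon([M|_A])$ vanishes unless $A=\emptyset$ and $\varepsilon([M/A])$ vanishes unless $A=E_M$, so by part (1) of Corollary \ref{cor: H-matroid minor properties} we get $(\varepsilon\otimes\id)\Delta[M]=[M/\emptyset]=[M]$ and $(\id\otimes\varepsilon)\Delta[M]=[M|_{E_M}]=[M]$. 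This establishes the bialgebra structure.

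Finally, I would grade $\mathscr{H}$ by placing $[M]$ in degree $\# E_M$; this is well-defined on $\Miso$, and $\mu,\eta$ are graded since $\# E_{M\oplus N}=\# E_M+\# E_N$, $\Delta$ is graded since $\# A+\#(E_M\setminus A)=\# E_M$, and $\varepsilon$ is graded. The degree-zero piece is spanned by isomorphism classes of $H$-matroids on the empty ground set; there is exactly one such (the rank-zero Grassmann--Pl\"ucker function, unique up to $H^\times$), so $\mathscr{H}_0=k[\emptyset]\cong k$ and $\mathscr{H}$ is connected. A connected graded bialgebra automatically admits a unique antipode --- given by the formula recalled in \S2 following \cite{takeuchi1971free} --- so $\mathscr{H}$ is canonically a Hopf algebra. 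The only step requiring real work is the multiplicativity of $\Delta$, i.e.\ that taking minors distributes over direct sums at the level of $H$-matroids; everything else is the bookkeeping already set up in \S\ref{section: minor}.
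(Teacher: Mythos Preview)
Your proposal is correct and follows the same approach as the paper: grade by cardinality of the ground set, observe connectedness, and invoke Takeuchi's theorem. The paper's own proof is in fact much terser than yours---it simply asserts that the grading is compatible with the bialgebra structure and that $\mathscr{H}$ is connected---so your careful verification of well-definedness, multiplicativity of $\Delta$, coassociativity, and the counit axiom fills in details the paper leaves implicit.
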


\begin{proof}
  There is a canonical grading on $\mathscr{H}$ via the cardinality of
  the underlying set of each element $[M]$ and this is clearly
  compatible with the bialgebra structure of $\mathscr{H}$. In this
  case, $[\emptyset]$ has a degree $0$ and hence $\mathscr{H}$ is
  connected. The last assertion simply follows from the result of
  \cite{takeuchi1971free}.
\end{proof}

\begin{myeg}
  Let $H$ be the hyperfield which is obtained from $\mathbb{Q}$ as in
  Example \ref{example: Ghyp} (considered as a totally ordered abelian
  group) and $k$ be a field. Let $M=U_1^1$ (the uniform rank-$1$
  matroid on one element) and $\mathcal{M}_{iso}$ be the free monoid
  generated by the isomorphism class $[M]$ of $M$. Then one can easily
  see that the Hopf algebra $k[\mathcal{M}_{iso}]$ is just $k[T]$,
  where $T$ is the isomorphism class of $U^1_1$.\newline
  Let $X$ be the set of matroids over $H$ whose pushforward is
  $U^1_1$. Let $[M_1],[M_2] \in X$. Then $M_1$ and $M_2$ are
  isomorphic if and only if there exists $q \in H^\times=\mathbb{Q}$
  such that $a\odot \varphi_1 =\varphi_2$, where $\varphi_1$ (resp.~
  $\varphi_2$) is a Grassmann-Pl\"ucker function for $M_1$
  (resp.~$M_2$). It follows that the free monoid
  $\mathcal{M}_{iso}^H$, which is generated by the isomorphisms
  classes of $X$, is as follows:
  \[
    \mathcal{M}_{iso}^H=\{T_{q_1}^{n_1}T_{q_2}^{n_2}\cdots
    T_{q_j}^{n_j} \mid n_i,j\in \mathbb{N}, q_i \in \mathbb{Q}\}.
  \]
  Hence, the Hopf algebra $k[\mathcal{M}_{iso}^H]$ is just
  $k[T_q]_{q \in \mathbb{Q}}$. One then has the following surjection:
  \[
    \pi: k[\mathcal{M}_{iso}^H]=k[T_q]_{q \in \mathbb{Q}}
    \longrightarrow k[\mathcal{M}_{iso}]=k[T], \quad T_q \mapsto T.
  \]
  Note that the map $\pi$ is a surjection since any matroid in
  $\mathcal{M}_{iso}$ is realizable over $H$ by some matroid in
  $\mathcal{M}_{iso}^H$. One can easily see that $\Ker(\pi)$ is generated by elements of
  the form $T_{q_1}-T_{q_2}$ for $q_i \in \mathbb{Q}$.
\end{myeg}

\section{Relations to other generalizations} \label{section: partial hyperfields}

\subsection{Relation to matroids over fuzzy rings} \label{section: functor to fuzzy rings}

In this section, we investigate the results in previous sections in a
view of matroids over fuzzy rings, introduced by A.~Dress in
\cite{dress1986duality} (and later with W.~Wenzel in
\cite{dress1991grassmann}). We will employ the functor, which is
constructed by the second author together with J.~Giansiracusa and
O.~Lorscheid, from the category of hyperfields to the category of
fuzzy rings for this purpose. We note that the most recent work of
Baker and Bowler \cite{baker2017matroids} generalizes matroids over
hyperfields and matroids over fuzzy rings at the same time. For the
brief overview of this approach in connection to our previous work,
see \S \ref{subsection: partial}.

We first review the definition of matroids over fuzzy rings. In what
follows, we let $E$ be a finite set, $K$ a fuzzy ring, and $K^\times$
the group of multiplicatively invertible elements of $K$, unless
otherwise stated. Roughly speaking a fuzzy ring $K$ is a set, equipped
with two binary operations $+$, $\cdot$ such that $(K,+,0_K)$ and
$(K,\cdot,1_K)$ are commutative monoids (but not assuming that two
binary operations are compatible), together with a distinguished
subset $K_0$ and a distinguished element $\varepsilon$, satisfying
certain list of axioms. The element $\varepsilon$ of $K$ plays the
role of the additive inverse of $1$ and $K_0$ is ``the set of zeros'';
this is where the term ``fuzzy'' came from. For the precise definition
of fuzzy rings, we refer the readers to \cite[\S
2.3.]{Giansiracusa2017}.

\begin{rmk}
  We restrict ourselves to the case that $E$ is a finite set to make
  an exposition simpler, although one interesting facet of Dress and
  Wenzel's theory is that $E$ does not have to be finite.
\end{rmk}

\begin{mydef}
  Let $E$ be a finite set and $(K;+,\cdot;\varepsilon,K_0)$ a fuzzy
  ring.
  \begin{enumerate}
  \item The \emph{unit-support} of a function $f:E\to K$ is defined by
    \[
      \usupp(f):=f^{-1}(K^\times).
    \]
  \item The \emph{inner product} of two functions $f,g:E\to K$ is
    defined by
    \[
      \langle f,g\rangle
      :=\sum_{e\in\supp{(f)}\cap\supp{(g)}}f(e)\cdot g(e).
    \]
  \item Two functions $f,g:E\to K$ are \emph{orthogonal}, denoted
    $f\perp g$, when $\langle f,g\rangle $ is an element of $K_0$.
  \item The \emph{wedge} of $f,g:E\to K$ is the function
    \begin{equation}\label{def: wedge fuzzy rings}
      f\wedge g
      :E\times E\to K, \quad 
      (x,y)\mapsto
      \begin{cases}
        0&\text{if }x=y \\
        f(x)\cdot g(y)+\varepsilon f(y)\cdot g(x)&\text{otherwise}
      \end{cases}
    \end{equation}
  \end{enumerate}
\end{mydef}

%Notice that this object behaves like a $2\times2$ minor of a matrix in the following sense:
Clearly, we have $\usupp(f) \subseteq \supp(f)$. The following lemma
now directly follows from the definition:

\begin{lem}
  Let $f,g,h:E \to K$ be functions. Suppose that $f$ is orthogonal to
  both $g$ and $h$. If
  \[
    \supp{(f)}\cap(\supp{(g)}\cup\supp{(h)})\subseteq\usupp{(f)},
  \]
  then for all $x\in E$ the function $(g\wedge h)|_{\{x\}\times E}$ is
  orthogonal to $f$.
\end{lem}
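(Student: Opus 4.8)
The plan is to unwind the definition of the wedge and the inner product and reduce the statement to the orthogonality hypotheses on $f$. First I would fix $x \in E$ and compute $(g\wedge h)|_{\{x\}\times E}$ as a function on $E$: by \eqref{def: wedge fuzzy rings}, it sends $y$ to $0$ when $y=x$ and to $g(x)\cdot h(y) + \varepsilon\, g(y)\cdot h(x)$ otherwise. Call this function $w_x : E \to K$. The goal is to show $\langle w_x, f\rangle \in K_0$, i.e.\ that $\sum_{e \in \supp(w_x)\cap \supp(f)} w_x(e)\cdot f(e)$ lies in the distinguished subset $K_0$.

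Next I would expand $\langle w_x, f\rangle = \sum_{e} \big(g(x)\cdot h(e) + \varepsilon\, g(e)\cdot h(x)\big)\cdot f(e)$, the sum running over the appropriate support set (with the $e=x$ term vanishing). Using the fuzzy-ring axioms (the partial distributivity and commutativity of $\cdot$), this should split as $g(x)\cdot\big(\sum_e h(e)\cdot f(e)\big) + \varepsilon\, h(x)\cdot\big(\sum_e g(e)\cdot f(e)\big)$, up to a careful treatment of which indices $e$ appear — and here the support hypothesis $\supp(f)\cap(\supp(g)\cup\supp(h)) \subseteq \usupp(f)$ is exactly what is needed: it guarantees that wherever $g$ or $h$ is nonzero and $f$ is nonzero, $f$ is in fact a \emph{unit}, so no term can silently drop out of a sum by landing in a non-unit part of $K$, and the index set of $\langle w_x,f\rangle$ matches up with $\supp(h)\cap\supp(f)$ and $\supp(g)\cap\supp(f)$ respectively (again excluding $e=x$, but since $x$ contributes $0$ this is harmless). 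Thus $\langle w_x,f\rangle = g(x)\cdot\langle h,f\rangle + \varepsilon\, h(x)\cdot\langle g,f\rangle$ in $K$.

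Finally, by hypothesis $f\perp h$ and $f\perp g$, so $\langle h,f\rangle = \langle f,h\rangle \in K_0$ and $\langle g,f\rangle = \langle f,g\rangle \in K_0$. Since $K_0$ is closed under multiplication by arbitrary elements of $K$ and under (hyper)addition in the fuzzy-ring sense, both $g(x)\cdot\langle h,f\rangle$ and $\varepsilon\, h(x)\cdot\langle g,f\rangle$ lie in $K_0$, hence so does their sum, giving $w_x \perp f$ as claimed. The main obstacle is the bookkeeping in the middle step: fuzzy rings do not satisfy full distributivity, so one must invoke the precise fuzzy-ring axioms governing how $\sum$ interacts with $\cdot$ and make sure the support condition is used to keep all index sets aligned — this is where the hypothesis $\supp(f)\cap(\supp(g)\cup\supp(h))\subseteq\usupp(f)$ does its real work, ensuring that terms involving non-units of $K$ never cause the naive distributive rearrangement to fail.
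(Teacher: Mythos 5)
Your overall strategy (expand the wedge, compare with $\langle f,g\rangle$ and $\langle f,h\rangle$, use closure of $K_0$) is the right starting point, and your use of the hypothesis $\supp(f)\cap(\supp(g)\cup\supp(h))\subseteq\usupp(f)$ to expand each product $f(e)\cdot\bigl(g(x)h(e)+\varepsilon g(e)h(x)\bigr)$ via distributivity by the unit $f(e)$ is correct. The genuine gap is your dismissal of the diagonal term $e=x$ as ``harmless.'' By definition $(g\wedge h)(x,x)=0$, so $\langle (g\wedge h)|_{\{x\}\times E},f\rangle$ contains no contribution at $e=x$, whereas $g(x)\cdot\langle h,f\rangle$ and $\varepsilon h(x)\cdot\langle g,f\rangle$ contain the terms $g(x)h(x)f(x)$ and $\varepsilon h(x)g(x)f(x)$ whenever $x\in\supp(f)\cap\supp(g)\cap\supp(h)$. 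Over a field these two terms cancel, which is why the identity you assert is classically true; in a fuzzy ring addition is not cancellative, so neither the equality $\langle w_x,f\rangle=g(x)\langle h,f\rangle+\varepsilon h(x)\langle g,f\rangle$ nor the inference ``$a+c+\varepsilon c\in K_0\Rightarrow a\in K_0$'' is available. (Concretely, in the fuzzy ring $\mathcal{P}^*(\mathbb{K})$ one has $\{1\}+\{1\}+\{1\}=\{0,1\}\in K_0$ while $\{1\}\notin K_0$.) A second, smaller issue: pulling the scalars $g(x)$ and $\varepsilon h(x)$ out of (or into) the sums is not an equality in $K$, since distributivity holds only for units (Dress's axiom (FR2)); what you may use is the membership-level axiom $\alpha+\beta(\gamma_1+\gamma_2)\in K_0\Leftrightarrow\alpha+\beta\gamma_1+\beta\gamma_2\in K_0$, which repairs that step but not the diagonal problem.

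The diagonal can be handled as follows. If $g(x)=h(x)=0$ then $(g\wedge h)|_{\{x\}\times E}$ is identically $0$ and there is nothing to prove; if $f(x)=0$ the offending terms vanish and your argument (with the membership-level distributivity just mentioned) goes through. In the remaining case $x\in\supp(f)\cap(\supp(g)\cup\supp(h))$, so $f(x)\in K^\times$ by hypothesis. Write $\langle f,g\rangle=f(x)g(x)+\sum_{y\neq x}f(y)g(y)$ and $\langle f,h\rangle=\sum_{y\neq x}f(y)h(y)+f(x)h(x)$, both in $K_0$, and apply Dress's axiom (FR6) (from $\alpha_1+\beta_1\in K_0$ and $\alpha_2+\beta_2\in K_0$ deduce $\alpha_1\alpha_2+\varepsilon\beta_1\beta_2\in K_0$) to get $f(x)g(x)\bigl(\sum_{y\neq x}f(y)h(y)\bigr)+\varepsilon f(x)h(x)\bigl(\sum_{y\neq x}f(y)g(y)\bigr)\in K_0$; multiply by the unit $f(x)^{-1}$, then use the membership-level distributivity and (FR2) with the units $f(y)$ (again the support hypothesis) to identify the resulting element with $\langle (g\wedge h)|_{\{x\}\times E},f\rangle$. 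Note the paper offers no proof (it asserts the lemma ``follows directly from the definition''), but the diagonal terms are precisely the point that needs an actual idea beyond unwinding the definitions, and your write-up passes over it.
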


We further define for all $R\subseteq K^E$,

\begin{equation}\label{def: higher wedge}
  \bigwedge R:=\set{f_1\wedge f_2\wedge\cdots\wedge f_n}{n\in\bbN\text{
      and for all }i\in[n],\text{ we have }f_i\in R}. 
\end{equation}

For each $R \subseteq K^E$, we let
\begin{equation}
  [R]:=\set{r|_{(x_1,x_2,\cdots,x_{n-1})\times E}}{r = f_1\wedge
    f_2\wedge\cdots\wedge f_n\in\bigwedge R\text{ and }x_i\in E\text{
      for }i\in[n]}. 
\end{equation}

For $S\subseteq E$ and $R\subseteq K^E$, we define
\[
  R_S:=\set{f\in R}{\supp{(f)}\cap S=\usupp{(f)}\cap S}.
\]

\begin{mydef}\label{def: matroids over fuzzy rings}
  A \emph{matroid over $K$} on $E$ is presented by $(X,R)$ if
  \begin{enumerate}
  \item $X\subseteq\pow(E)$ is a set of bases of an ordinary matroid,
  \item $R$ is a subset of $(K^E)$ satisfying the following: for any
    $n \in \mathbb{N}$, $f=f_1\wedge f_2\wedge \cdots \wedge f_n$ with
    $f_i \in R$ for $i=1,...,n$, and $(x_1,...,x_n) \in E^n$ with
    $x_n \in Y$ for $Y \in X$ such that $f(x_1,...,x_n) \notin K_0$,
    there exists $g \in R|_Y$ such that
    \[
      x_n \in \supp(g) \cap Y \subseteq \supp(f).
    \]
  \end{enumerate}
\end{mydef}

\begin{rmk}
  Two different data $(X,R)$ and $(X',R')$ on $E$ may present the same
  matroid over $K$ (see, \cite{dress1986duality}). Also, it is not
  difficult to see that the set of circuits of a matroid over $K$
  (i.e.\ the set of support minimal elements of $R$) has a supports
  set which is the set of circuits of an ordinary matroid.
\end{rmk}

We now review the functors constructed in \cite{Giansiracusa2017} to
link matroids over hyperfields and matroids over fuzzy rings. Note
that Dress and Wenzel also introduced the cryptomorphic description of
matroids over fuzzy rings by using Grassmann-Pl\"{u}cker functions in
\cite{dress1991grassmann}. There are two types of morphisms for fuzzy
rings (called weak and strong morphisms in
\cite{Giansiracusa2017}). We let $\mathbf{Hyperfields}$ be the
category of hyperfields, $\mathbf{FuzzyRings}_{wk}$ the category of
fuzzy rings with weak morphisms, and $\mathbf{FuzzyRings}_{str}$ the
category of fuzzy rings with strong morphisms. Then, one has the
following:

\begin{mythm}\cite[\S 3]{Giansiracusa2017}
  There exists a fully faithful functor from $\mathbf{Hyperfields}$ to
  $\mathbf{FuzzyRings}_{wk}$.
\end{mythm}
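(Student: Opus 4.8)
The plan is to construct the functor $\mathcal{F}:\mathbf{Hyperfields}\to\mathbf{FuzzyRings}_{wk}$ explicitly, describe its effect on objects and morphisms, and then verify that it is full and faithful. On objects, given a hyperfield $(H,\boxplus,\odot)$, one builds a fuzzy ring $\mathcal{F}(H)$ by taking as underlying set a suitable quotient of the free commutative monoid on $H$ modulo the relations induced by hyperaddition: roughly, one adjoins formal sums and declares a formal sum $\sum a_i$ to lie in the distinguished ``zero ideal'' $K_0$ precisely when $0_H\in\boxplus_i a_i$. The multiplication is inherited from $\odot$, the element $\varepsilon$ is the class of $-1_H$, and $1_K$ is the class of $1_H$. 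One must check that this construction satisfies the fuzzy ring axioms listed in \cite[\S 2.3]{Giansiracusa2017}; this is where the choice of ``weak'' morphisms on the fuzzy-ring side becomes relevant, since the multivaluedness of $\boxplus$ forces the weaker compatibility conditions rather than the strong ones.

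On morphisms, a hyperfield homomorphism $f:H_1\to H_2$ satisfies $f(a\boxplus b)\subseteq f(a)\boxplus f(b)$, which is exactly the inclusion needed to induce a well-defined map $\mathcal{F}(f):\mathcal{F}(H_1)\to\mathcal{F}(H_2)$ of fuzzy rings: a formal sum in $K_0$ over $H_1$ maps to a formal sum over $H_2$ whose hyperaddition still contains $0$, by the inclusion property. I would verify functoriality (identities to identities, composition to composition), which is routine once the construction is pinned down. Faithfulness is essentially immediate: the underlying set of $\mathcal{F}(H)$ contains (the classes of) the elements of $H$, and $\mathcal{F}(f)$ restricted to these classes recovers $f$, so distinct homomorphisms give distinct fuzzy ring maps.

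Fullness is the substantive point: given a (weak) fuzzy ring morphism $g:\mathcal{F}(H_1)\to\mathcal{F}(H_2)$, I must produce a hyperfield homomorphism $f:H_1\to H_2$ with $\mathcal{F}(f)=g$. The candidate is the restriction of $g$ to the classes of elements of $H_1$; one checks it lands in (the classes of) elements of $H_2$ using that $g$ preserves $K^\times$ and $1_K$, that it is multiplicative (giving the monoid homomorphism condition on $H_1^\times$), and that it sends the zero class to the zero class. The inclusion $f(a\boxplus b)\subseteq f(a)\boxplus f(b)$ then follows from $g$ preserving $K_0$: if $c\in a\boxplus b$ in $H_1$, then $a\boxplus b\boxplus(-c)\ni 0$, so the corresponding formal sum lies in $K_0$, hence its image under $g$ lies in $K_0$, which unwinds to $0_{H_2}\in f(a)\boxplus f(b)\boxplus(-f(c))$, i.e.\ $f(c)\in f(a)\boxplus f(b)$.

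The main obstacle I anticipate is bookkeeping in the object-level construction: getting the quotient of the free monoid, the zero ideal $K_0$, and the distinguished element $\varepsilon$ set up so that \emph{all} the fuzzy ring axioms hold and so that the ``weak'' morphism structure is the right target. In particular one must be careful that reversibility of the canonical hypergroup (the hyper-subtraction axiom in Definition \ref{def: hypergroup}) is precisely what is needed to validate the relevant fuzzy ring axiom involving $\varepsilon$, and that associativity of $\boxplus$ translates correctly into the monoid relations. Since this functor is exactly the one constructed in \cite{Giansiracusa2017}, I would, for the purposes of this paper, cite that construction and restrict the proof sketch to recalling its definition on objects and morphisms and indicating why faithfulness and fullness hold as above.
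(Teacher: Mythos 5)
Your morphism-level discussion and your full/faithfulness strategy (restrict a weak fuzzy-ring morphism to the singleton classes of elements of $H_1$, use preservation of the null set $K_0$ together with reversibility in the canonical hypergroup to recover $f(c)\in f(a)\boxplus f(b)$) are in the right spirit, and since the paper itself only cites \cite{Giansiracusa2017} and recalls the construction, deferring to that reference is acceptable. However, your object-level construction is not the functor of \cite{Giansiracusa2017}, and as stated it has a genuine gap. You propose to take ``a suitable quotient of the free commutative monoid on $H$ modulo the relations induced by hyperaddition.'' Because $\boxplus$ is multivalued, there is no sensible congruence here: if one identifies the formal sum $a+b$ with each element of $a\boxplus b$, then already in the Krasner hyperfield $1+1=\{0,1\}$ forces $0\sim 1$ by transitivity, collapsing the structure; if one makes no such identification, what remains is essentially the tract $(H^\times, N_{H^\times})$ of Baker--Bowler (formal sums in $\mathbb{N}[H^\times]$ with a designated null set), which is not a Dress--Wenzel fuzzy ring without substantial further work, and in particular the fuzzy-ring axioms you promise to ``check'' cannot even be formulated until the addition is an honest single-valued associative operation.

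The construction actually used (and recalled in \S 5.1 of this paper) resolves exactly this point by enlarging the underlying set: one takes $K:=\mathcal{P}^*(H)$, the nonempty subsets of $H$, with $A+B:=\bigcup_{a\in A,\,b\in B}a\boxplus b$ and $A\cdot B:=\{a\odot b \mid a\in A,\ b\in B\}$, with $\varepsilon:=\{-1_H\}$ and $K_0:=\{A\subseteq H \mid 0_H\in A\}$. Here addition is single-valued by construction, associativity of $\boxplus$ transfers directly, and the units of $K$ are precisely the singletons $\{a\}$ with $a\in H^\times$ --- which is what makes your faithfulness and fullness arguments go through, since weak morphisms of fuzzy rings are controlled by their effect on units and on the null set. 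So the fix is not ``bookkeeping'' on a quotient of formal sums, as you anticipate, but replacing that quotient by the power-set construction; with that replacement your outline of faithfulness and fullness matches the intended argument.
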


The construction goes as follows. For a hyperfield
$(H,\boxplus,\odot)$, we let $K:=\mathcal{P}^*(H)$ and impose two
binary operations $+$ and $\cdot$ as follows: for all
$A,B \subseteq H$,
\[
  A+B:=\bigcup_{a \in A,b\in B}a\boxplus b, \quad A\cdot B:=\{a\odot b
  \mid a \in A,~b\in B\}.
\] 
Then, $(K,+,\{0_H\})$ and $(K,\cdot,\{1_H\})$ become commutative
monoids. It is shown in \cite{Giansiracusa2017} that with
$K_0=\{A \subseteq H \mid 0_H \in A\}$ and $\varepsilon=\{-1_H\}$,
$(K,+,\cdot,\varepsilon,K_0)$ becomes a fuzzy ring. For any hyperfield
$H$, we let $\mathscr{F}(H)$ be the fuzzy ring defined in this
way. For morphisms and more details, we refer the readers to
\cite{Giansiracusa2017}.

\begin{rmk}
  It is also shown in \cite{Giansiracusa2017} that there exists a
  quasi-inverse $\mathscr{G}$ of the functor $\mathscr{F}$.
\end{rmk}

Now, we employ the functor $\mathscr{F}$ to yield minors of matroids
over fuzzy rings as in Dress and Wenzel from minors of matroids over
hyperfields as in Baker and Bowler. In what follows, all matroids over
hyperfields are assumed to be strong. One has the following:

\begin{mythm}\cite[\S 7.2]{Giansiracusa2017}\label{theorem: GPF same}
  Let $E$ be a finite set, $H$ be a hyperfield, $K=\mathscr{F}(H)$ be
  the fuzzy ring obtained from $H$, and $r$ a positive integer. Then a
  function
  \[
    \varphi:E^r \longrightarrow H^\times=\mathscr{F}(H)^\times
  \]
  is a Grassmann-Pl\"ucker function over the fuzzy ring
  $K=\mathscr{F}(H)$ in the sense of Dress and Wenzel in
  \cite{dress1991grassmann} if and only if $\varphi$ is a strong-type
  Grassmann-Pl\"ucker function over $H$.
\end{mythm}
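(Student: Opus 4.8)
The plan is to reduce the equivalence to a translation dictionary between the hyperfield $H$ and the fuzzy ring $K=\mathscr{F}(H)$, and then to match the two lists of axioms line by line. Recall the construction: $K=\mathcal{P}^*(H)$ with $A\cdot B=\{a\odot b:a\in A,\ b\in B\}$ and $A+B=\bigcup_{a\in A,\,b\in B}a\boxplus b$, with $1_K=\{1_H\}$, $0_K=\{0_H\}$, $\varepsilon=\{-1_H\}$, $K_0=\{A\subseteq H:0_H\in A\}$, and $K^\times=\{\{a\}:a\in H^\times\}$; write $\iota\colon H\hookrightarrow K$, $a\mapsto\{a\}$, for the resulting embedding. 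First I would record the three elementary facts that do all the work: (i) $\iota$ is multiplicative and $\iota(-1_H)=\varepsilon$, hence $\iota\big((-1_H)^{k} c\big)=\varepsilon^{k}\iota(c)$ for every $c\in H$ and $k\in\mathbb{Z}$ (using $\varepsilon^2=1_K$); (ii) for any finite list $a_1,\dots,a_n\in H$, the iterated Minkowski sum $\iota(a_1)+\cdots+\iota(a_n)$ equals, as a subset of $H$, the iterated hypersum $a_1\boxplus\cdots\boxplus a_n$ --- this is immediate from the definition of $+$ on $K$ together with associativity \eqref{associativity} of $\boxplus$; and therefore (iii) $0_H\in a_1\boxplus\cdots\boxplus a_n$ if and only if $\iota(a_1)+\cdots+\iota(a_n)\in K_0$. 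Fact (iii) is precisely the bridge turning ``$0_H\in(\text{a hypersum})$'' statements over $H$ into ``$(\text{a sum})\in K_0$'' statements over $K$.

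Now let $\varphi\colon E^r\to H$ and regard it, via $\iota$, as a function $E^r\to K$; its values then lie in $\iota(H)=\{0_K\}\cup K^\times$ (this is the range of functions the statement compares on the $K$-side). I would then check that the four defining conditions transport to Dress--Wenzel's axioms. Nontriviality: $\varphi\not\equiv 0_H$ iff $\iota\circ\varphi\not\equiv 0_K$, which, given the value restriction, is Dress--Wenzel's nondegeneracy requirement. Alternation: by Fact (i), Axiom (G2), $\varphi(\sigma\cdot\mathbf{x})=\sgn(\sigma)\varphi(\mathbf{x})$, becomes $(\iota\varphi)(\sigma\cdot\mathbf{x})=\varepsilon^{\,\ell(\sigma)}(\iota\varphi)(\mathbf{x})$, where $\ell(\sigma)$ is the number of inversions of $\sigma$ (only its parity matters since $\varepsilon^2=1_K$), and Axiom (G3) transports verbatim; together these are the Dress--Wenzel alternating axiom. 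The Grassmann--Pl\"ucker relation: applying $\iota$ to Axiom (SG) termwise --- using multiplicativity of $\iota$ to collapse each $\varphi(\cdots)\varphi(\cdots)$ to a singleton, Fact (i) to absorb the sign, and Fact (iii) to read off membership in $K_0$ --- the condition
\[
0_H\in\sum_{k=1}^{r+1}(-1)^k\,\varphi(\mathbf{x}|_{[r+1]\setminus\{k\}})\,\varphi(\mathbf{x}(k),\mathbf{y})\qquad\text{for all }\mathbf{x}\in E^{r+1},\ \mathbf{y}\in E^{r-1}
\]
becomes
\[
\sum_{k=1}^{r+1}\varepsilon^{k}\,(\iota\varphi)(\mathbf{x}|_{[r+1]\setminus\{k\}})\cdot(\iota\varphi)(\mathbf{x}(k),\mathbf{y})\in K_0 ,
\]
which is the Dress--Wenzel Grassmann--Pl\"ucker relation for $\iota\circ\varphi$ over $K$ after the standard reindexing that aligns it with the precise form in \cite{dress1991grassmann}. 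Since the assignment $\varphi\mapsto\iota\circ\varphi$ is a bijection between functions $E^r\to H$ and functions $E^r\to\iota(H)\subseteq K$ under which all four conditions correspond, ``strong-type Grassmann--Pl\"ucker function over $H$'' and ``Grassmann--Pl\"ucker function over $\mathscr{F}(H)$'' name the same objects, in both directions at once.

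I expect the main obstacle to be bookkeeping rather than ideas: one must confirm that Dress and Wenzel's stated form of the Grassmann--Pl\"ucker relations --- their index set, the placement of the $\varepsilon$-powers, and whether the relation is imposed for all tuples or only for tuples with pairwise distinct entries --- matches Axiom (SG) after using alternation and (G3) (tuples with a repeated entry contribute $0$ on both sides, so the quantifications are effectively the same), and one must note that multiplication by a singleton distributes over the $K$-sums appearing, since scaling a Minkowski sum by $\{a\}$ is again a Minkowski sum --- this is exactly what legitimizes the termwise application of $\iota$. Finally I would remark that the running assumption ``all matroids over hyperfields are assumed to be strong'' is essential here: it is the \emph{strong}-type relation (SG) that corresponds to Dress--Wenzel's single Grassmann--Pl\"ucker relation, whereas the weak-type Axiom (WG) is a strictly weaker three-term condition; the compatibility of $\mathscr{F}$ with the weak/strong dichotomy is recorded in \cite{Giansiracusa2017} and may be cited directly.
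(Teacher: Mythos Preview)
The paper does not supply its own proof of this theorem; it is quoted as a result from \cite[\S 7.2]{Giansiracusa2017} and immediately used. So there is no in-paper argument to compare your proposal against.

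That said, your proposal is the natural proof and is essentially what one finds in \cite{Giansiracusa2017}: the equivalence is a direct translation through the embedding $\iota\colon H\hookrightarrow\mathscr{F}(H)$, using precisely your three facts (multiplicativity of $\iota$, identification of iterated hypersums with Minkowski sums, and the characterization $0_H\in A\iff A\in K_0$). Your axiom-by-axiom matching is correct, and you are right that the only real content is the bookkeeping needed to align the Dress--Wenzel indexing conventions with Axiom (SG). One small point worth making explicit: the statement's displayed codomain $H^\times=\mathscr{F}(H)^\times$ is slightly imprecise (a Grassmann--Pl\"ucker function takes the value $0$ on degenerate tuples by (G3)); your remark that the values lie in $\iota(H)=\{0_K\}\cup K^\times$ is the right way to read it, and this is exactly the range Dress and Wenzel allow.
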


For a matroid $M$ over $H$ we abuse notation and let $\mathscr{F}(M)$
be the corresponding matroid over the fuzzy ring $\mathscr{F}(H)$. One
has the following corollary:

\begin{cor}
  Let $E$ be a finite set, $H$ a hyperfield, $K=\mathscr{F}(H)$ the
  fuzzy ring obtained from $H$, $r$ a positive integer, and $S$ a
  subset of $E$. Then, we have
  \begin{enumerate}
  \item $\mathscr{F}(M|S)=\mathscr{F}(M)|S$.
  \item $\mathscr{F}(M\backslash S)=\mathscr{F}(M)\backslash S$.
  \item $\mathscr{F}(M/S)=\mathscr{F}(M)/S$.
  \end{enumerate}
  In particular, if $N$ is a minor of $M$, then $\mathscr{F}(N)$ is a
  minor of $\mathscr{F}(M)$.
\end{cor}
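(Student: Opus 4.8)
The plan is to deduce each of the three identities directly from the characterizations of minors in terms of Grassmann-Pl\"ucker functions, using Theorem \ref{theorem: GPF same} to match the two notions of Grassmann-Pl\"ucker function over a hyperfield $H$ and over the fuzzy ring $K=\mathscr{F}(H)$. The key observation is that, by Proposition \ref{proposition: minor total}, restriction, deletion, and contraction of a strong $H$-matroid $M$ with Grassmann-Pl\"ucker function $\varphi$ are each given by an \emph{explicit formula} for a new Grassmann-Pl\"ucker function, obtained from $\varphi$ by fixing an ordered basis $\mathbf{B}$ of an appropriate minor of the underlying matroid $M_\varphi$ and restricting/prepending/appending coordinates. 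Since Dress and Wenzel's theory of matroids over fuzzy rings also admits a Grassmann-Pl\"ucker cryptomorphism (as recalled before Theorem \ref{theorem: GPF same}), it suffices to check that the fuzzy-ring formulas for restriction, deletion, and contraction agree with exactly these same coordinate manipulations.

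First I would handle restriction. Given a strong $H$-matroid $M$ on $E$ with Grassmann-Pl\"ucker function $\varphi:E^r\to H^\times$, Theorem \ref{theorem: GPF same} says that $\varphi$ is also a Grassmann-Pl\"ucker function for $\mathscr{F}(M)$ over $K=\mathscr{F}(H)$. For $S\subseteq E$, Proposition \ref{proposition: minor total}(2) gives $M|S$ via $\varphi^{\mathbf{B}}:S^{r-k}\to H,\ \mathbf{x}\mapsto\varphi(\mathbf{x},\mathbf{B})$ for a fixed ordered basis $\mathbf{B}$ of $M_\varphi/(E\setminus S)$. On the fuzzy-ring side one checks (from Dress--Wenzel's definition of restriction via the pair $(X,R)$) that the Grassmann-Pl\"ucker function of $\mathscr{F}(M)|S$ is given by the very same formula; since both sides are (by Theorem \ref{theorem: GPF same} applied again to the strong $H$-matroid $M|S$) determined by their common $H^\times$-class of Grassmann-Pl\"ucker functions, we conclude $\mathscr{F}(M|S)=\mathscr{F}(M)|S$. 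Deletion is then the special case $\mathscr{F}(M\setminus S)=\mathscr{F}(M|(E\setminus S))=\mathscr{F}(M)|(E\setminus S)=\mathscr{F}(M)\setminus S$, using part (1) together with the definitions $M\setminus S:=M|(E\setminus S)$ on both sides.

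For contraction, the cleanest route is via duality. By Definition \ref{def: deletion and contraction}, $M/S=(M^*\setminus S)^*$, and the same identity holds for matroids over fuzzy rings. So it suffices to establish that $\mathscr{F}$ commutes with the duality operation, i.e.\ $\mathscr{F}(M^*)=\mathscr{F}(M)^*$; combining this with part (2) for deletion gives $\mathscr{F}(M/S)=\mathscr{F}((M^*\setminus S)^*)=\mathscr{F}(M^*\setminus S)^*=(\mathscr{F}(M^*)\setminus S)^*=(\mathscr{F}(M)^*\setminus S)^*=\mathscr{F}(M)/S$. To see that $\mathscr{F}$ preserves duals, one uses the explicit dual Grassmann-Pl\"ucker function $\varphi^*(\mathbf{B})=\sgn_\leq(\mathbf{B},\mathbf{E\setminus B})\varphi(\mathbf{E\setminus B})$ described in the Preliminaries: this formula involves only the multiplication of $H$ (together with signs, i.e.\ multiplication by $\varepsilon$) and so is identical whether we read $\varphi$ as valued in $H^\times$ or in $\mathscr{F}(H)^\times$; Dress and Wenzel's dual for matroids over fuzzy rings is given by the same formula. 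The final clause, that $N$ a minor of $M$ implies $\mathscr{F}(N)$ a minor of $\mathscr{F}(M)$, follows immediately by iterating (1)--(3), since every minor is obtained by a finite sequence of deletions and contractions (Corollary \ref{cor: H-matroid minor properties}).

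The main obstacle is the bookkeeping required to verify that the fuzzy-ring formulas genuinely coincide with the $H$-side formulas from Proposition \ref{proposition: minor total}: Dress--Wenzel phrase restriction and contraction in terms of the pair $(X,R)$ of bases and ``$R$-sets'' (Definition \ref{def: matroids over fuzzy rings}) rather than directly via Grassmann-Pl\"ucker functions, so one must pass through their cryptomorphism (the one mentioned just before Theorem \ref{theorem: GPF same}) to extract the induced Grassmann-Pl\"ucker function of the minor and confirm it matches $\varphi^{\mathbf{B}}$ (resp.\ $\varphi''$). Once the duality compatibility $\mathscr{F}(M^*)=\mathscr{F}(M)^*$ is pinned down — which is really just the statement that both duals are computed by the same sign-twisted reindexing of $\varphi$ — everything else is formal, so I would present the restriction and duality compatibilities carefully and then deduce deletion and contraction as corollaries, exactly as above.
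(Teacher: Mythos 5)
Your proposal is correct and follows essentially the same route as the paper, whose proof simply observes that the identities follow from the Grassmann-Pl\"ucker function characterizations of minors on both sides (Proposition \ref{proposition: minor total} for $H$-matroids and Dress--Wenzel's \S 5 for matroids over fuzzy rings), matched via Theorem \ref{theorem: GPF same}. Your detour through duality for contraction and through the $(X,R)$ cryptomorphism on the fuzzy-ring side is harmless but unnecessary, since Dress--Wenzel give the minor formulas directly at the level of Grassmann-Pl\"ucker functions, exactly parallel to the formulas $\varphi^{\mathbf{B}}$ and $\varphi''$ in the paper.
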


\begin{proof}
  This directly follows from the Grassmann-Pl\"ucker function
  characterizations of minors for matroids over hyperfields in \S
  \ref{section: minor} and for matroids over fuzzy rings in \cite[\S
  5]{dress1991grassmann}.
\end{proof}

\begin{rmk}
  By using the quasi-inverse $\mathscr{G}$ constructed in
  \cite{Giansiracusa2017}, for \textrm{field-like} fuzzy rings (see,
  \cite[\S 4]{Giansiracusa2017} for the definition), one can also
  obtain minors of matroids over hyperfields from minors of matroids
  over fuzzy rings.
\end{rmk}

\begin{rmk}
  One can use our definition of direct sums of matroids over
  hyperfields in \S \ref{subsection: direct sum} to define direct sums
  for matroids over fuzzy rings and hence obtain matroid-minor Hopf
  algebras for matroids over fuzzy rings.
\end{rmk}

\subsection{Relation to matroids over partial hyperfields}\label{subsection: partial}
In this section, we review Baker and Bowler's more generalized
framework, namely matroids over partial hyperfields
\cite{baker2017matroids} and explain how our work can be generalized
in this setting.

\begin{mydef}\cite[\S 1]{baker2017matroids}\label{def: tracts}
  A \emph{tract} is an abelian group $G$ together with a designated
  subset $N_G$ of the group semiring $\mathbb{N}[G]$ such that
  \begin{enumerate}
  \item $0_{\mathbb{N}[G]} \in N_G$ and $1_G \not \in N_G$.
  \item $\exists~!~ \varepsilon \in G$ such that
    $1+\varepsilon \in N_G$.
  \item $G \cdot N_G = N_G$.
  \end{enumerate}
\end{mydef}

The idea is similar to fuzzy rings; $\varepsilon$ plays the role of
$-1$ and $N_G$ encodes ``non-trivial dependence'' relations; in the
case of fuzzy rings, one has a designated subset $K_0$ of ``zeros'',
however, by using $\varepsilon$ one can always change $K_0$ to $N_G$
as above.\newline
For a hyperfield $(H,\boxplus,\odot)$, one can canonically associate a
tract $(G,N_G)$; this is very similar to the functor from the category
of hyperfield to the category of fuzzy rings in \S \ref{section:
  functor to fuzzy rings}. To be precise, one sets $G=H^\times$, and
lets $f=\sum a_ig_i \in \mathbb{N}[G]$ be in $N_G$ if and only if
\begin{equation}\label{tract assocation}
  0_H \in \boxplus (a_i \odot g_i)\quad  \textrm{ (as elements of $H$)}.
\end{equation}
Recall that partial fields are introduced by C.~Semple and G.~Whittle
in \cite{semple1996partial} to study realizability of matroids. A
partial field $(G\cup\{0_R\},R)$ consists of a commutative ring $R$
and a multiplicative subgroup $G$ of $R^\times$ such that $-1 \in G$
and $G$ generates $R$. Inspired by this definition (along with
hyperfields), Baker and Bowler define the following:

\begin{mydef}\cite[\S 1]{baker2017matroids}
  A \emph{partial hyperfield} is a hyperdomain $R$ (a hyperring
  without zero divisors) together with a designated subgroup $G$ of
  $R^\times$.
\end{mydef}

One can naturally associate a tract to a partial hyperfield $(G,R)$ in
a manner similar to the previous association of a tract to a
hyperfield by stating that $\sum a_ig_i \in \mathbb{N}[G]$ if and only
if \eqref{tract assocation} holds.

With tracts (or partial hyperfields), Baker and Bowler generalize
their previous work on matorids over hyperfields. Their main idea is
that in their proofs for matroids over hyperfields, one only needs the
three conditions of tracts given in Definition \ref{def:
  tracts}. Therefore, although we only focus on the case of matroids
over hyperfields, one can easily generalize our results to the case of
matroids over partial hyperfields.

\subsection{Tutte polynomials of Hopf algebras and Universal Tutte characters}\label{section: Tutte}
The \emph{Tutte polynomial} is one of the most interesting invariants
of graphs and matroids. In \cite{krajewski2015combinatorial},
T.~Krajewsky, I.~Moffatt, and A.~Tanasa introduced Tutte polynomials
associated to Hopf algebras. More recently, C.~Dupont, A.~Fink, and
L.~Moci introduced \emph{universal Tutte characters} generalizing
\cite{krajewski2015combinatorial}. In fact, both
\cite{dupont2017universal} and \cite{krajewski2015combinatorial}
consider the case when one has combinatorial objects which have
notions of ``deletion'' and ``contraction'' (e.g.\ graphs and
matroids). In the context of our work, the following is
straightforward.

\begin{pro}
  Let $H$ be a hyperfield. A set $\mathcal{M}_{iso}$ of isomorphisms
  classes of matroids over $H$, which is stable under taking direct
  sums and minors, satisfies the axioms of a \emph{minor system} in
  \cite[Definition 2.]{krajewski2015combinatorial}.
\end{pro}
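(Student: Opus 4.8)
The plan is to verify, one group at a time, that $(\Miso,\cdot)$ equipped with the operations of \S\ref{section: minor} satisfies every axiom of a minor system in the sense of \cite[Definition 2]{krajewski2015combinatorial}: (a) the assignment of a finite ground set, compatible with the operations; (b) deletion and contraction indexed by subsets, with $M/\emptyset=M=M\setminus\emptyset$ and the standard commutation identities; (c) a commutative, associative product with unit, whose ground set is the disjoint union of the factors'; and (d) compatibility of the product with restriction, deletion, and contraction. Almost everything needed has already been assembled in \S\ref{section: minor} and \S\ref{section: isomorphism class}, so the proof is largely a matter of citing the right statement; I will flag below the one point that genuinely requires a new (short) computation.

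First I would record that each operation descends to $\Miso$. An isomorphism of $H$-matroids (Definition \ref{def: GPF iso}) comes with a bijection of ground sets, so $[M]\mapsto E_M$ is well-defined up to canonical bijection, and one checks directly from the formulas of Proposition \ref{proposition: minor total} that $E_{M|_S}=S$, $E_{M\setminus S}=E_M\setminus S$, and $E_{M/S}=E_M\setminus S$; carrying an isomorphism $f\colon E_1\to E_2$ through those same formulas shows that $[M]\mapsto[M|_S]$, $[M\setminus S]$, $[M/S]$ are well-defined on $\Miso$, while the product $[M]\cdot[N]=[M\oplus N]$ is well-defined by Proposition \ref{pro: direcut for isomorphisms classes}. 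The identities in (b) are precisely Corollary \ref{cor: H-matroid minor properties}. For (c) the unit is the class $[\emptyset]$ of the empty matroid (the rank-$0$ Grassmann--Pl\"ucker function on $\emptyset$, which formula \eqref{GPF formula} shows is a two-sided identity for $\oplus$), and $E_{M\oplus N}=E_M\sqcup E_N$ by \eqref{circuits}; commutativity and associativity of $\oplus$ on $\Miso$ follow from the explicit formula \eqref{GPF formula} of Proposition \ref{proposition: direct sum of GPF}, since interchanging the two coordinate blocks (resp.\ re-associating three blocks) alters $\varphi_M\oplus\varphi_N$ only by the sign of a permutation of the ground set, hence produces an isomorphic $H$-matroid.

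It remains to check the compatibility axioms (d): writing $A_M:=A\cap E_M$ and $A_N:=A\cap E_N$ for $A\subseteq E_M\sqcup E_N$,
\begin{gather*}
(M\oplus N)|_A=(M|_{A_M})\oplus(N|_{A_N}),\qquad
(M\oplus N)\setminus A=(M\setminus A_M)\oplus(N\setminus A_N),\\
(M\oplus N)/A=(M/A_M)\oplus(N/A_N).
\end{gather*}
The first two are read off from circuits: by Proposition \ref{proposition: direction of circuits} a circuit of $M\oplus N$ is a circuit of $M$ (extended by $\mathbf{0}$ on $E_N$) or a circuit of $N$ (extended by $\mathbf{0}$ on $E_M$), and by Proposition \ref{proposition: minor total}(1),(3) restriction (resp.\ deletion) keeps exactly those whose support lies in (resp.\ avoids) $A$ and truncates them; the resulting circuit set is visibly $\calC_{M|_{A_M}}\oplus\calC_{N|_{A_N}}$ (resp.\ $\calC_{M\setminus A_M}\oplus\calC_{N\setminus A_N}$). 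The contraction case is where I expect the only real obstacle: since $M/A=(M^*\setminus A)^*$, one first needs the identity $(M\oplus N)^*=M^*\oplus N^*$, which is not spelled out in the text and requires a sign-sensitive comparison of the dual Grassmann--Pl\"ucker formula $\varphi^*(\mathbf B)=\sgn_{\le}(\mathbf B,\mathbf{E\setminus B})\,\varphi(\mathbf{E\setminus B})$ with \eqref{GPF formula} (the two agree on cobases up to a global sign absorbed by $H^\times$-equivalence, because the chosen order puts all of $E_M$ before all of $E_N$); granting this,
\begin{align*}
(M\oplus N)/A&=\big((M\oplus N)^*\setminus A\big)^*=\big((M^*\setminus A_M)\oplus(N^*\setminus A_N)\big)^*\\
&=(M^*\setminus A_M)^*\oplus(N^*\setminus A_N)^*=(M/A_M)\oplus(N/A_N),
\end{align*}
using the deletion compatibility just established and $(-)^{**}=\mathrm{id}$. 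Assembling (a)--(d) yields the Proposition. To summarize: the only genuinely delicate step is the auxiliary identity $(M\oplus N)^*\cong M^*\oplus N^*$; everything else is routine bookkeeping built on Corollary \ref{cor: H-matroid minor properties} and Propositions \ref{proposition: direct sum of GPF}, \ref{proposition: direction of circuits}, \ref{pro: direct sum cryp}, together with Corollary \ref{cor: operations commutes with pushforward}.
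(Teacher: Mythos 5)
Your proposal is correct, and in spirit it is the same argument as the paper's: reduce the minor-system axioms to the minor and direct-sum results already established. The difference is one of thoroughness. The paper's entire proof is the single sentence that the claim ``directly follows from Corollary \ref{cor: H-matroid minor properties}'', i.e.\ it records only the identities $M/\emptyset=M=M\setminus\emptyset$, $(M\setminus S)\setminus T=M\setminus(S\cup T)$, $(M/S)/T=M/(S\cup T)$, $(M\setminus S)/T=(M/T)\setminus S$ and leaves everything else implicit. You additionally verify the points the paper glosses over: that the operations descend to $\Miso$ (via Definition \ref{def: GPF iso} and Proposition \ref{pro: direcut for isomorphisms classes}), that $\oplus$ gives a commutative, associative, unital product on ground-set disjoint unions (Propositions \ref{proposition: direct sum of GPF} and \ref{proposition: direction of circuits}), and---most substantively---the compatibility of the product with restriction, deletion, and contraction, for which you correctly isolate the auxiliary identity $(M\oplus N)^*\cong M^*\oplus N^*$ as the one step requiring a genuinely new (sign-sensitive) computation with the dual Grassmann--Pl\"ucker formula; this identity appears nowhere in the paper, and your sketch of it (cobases of $M\oplus N$ are unions of cobases, and the order placing $E_M$ before $E_N$ makes the signs match up to a global unit absorbed by $H^\times$-equivalence) is sound. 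So your write-up does not take a different route; it fills in the verification that the paper compresses into a citation, and in doing so supplies a lemma the paper would also need if the minor-system axioms of Krajewski--Moffatt--Tanasa are checked in full (the product--contraction compatibility). The only caution is that the precise axiom list you verify should be matched line-by-line against Definition 2 of that reference rather than reconstructed from memory, but every item you check is indeed needed for the intended application (the bialgebra compatibility $\Delta(xy)=\Delta(x)\Delta(y)$ rests exactly on your identities $(M\oplus N)|_A=(M|_{A_M})\oplus(N|_{A_N})$ and $(M\oplus N)/A=(M/A_M)\oplus(N/A_N)$), so nothing you prove is superfluous.
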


\begin{proof}
  This directly follows from Corollary \ref{cor: H-matroid minor
    properties}.
\end{proof}

The term a \emph{minors system} is used in
\cite{dupont2017universal} to define universal Tutte characters. The
following is an easy consequence of \S \ref{section: minor}.

\begin{pro}
  Let $H$ be a hyperfield and $\Mat_H$ be the set species such that
  $\mathbf{Mat}_H(E)$ is the set of matroids over $H$ with an
  underlying set $E$. Then $\Mat_H$ is a connected multiplicative
  minors system as in \cite[Definition 2.6. and
  2.8.]{dupont2017universal}.
\end{pro}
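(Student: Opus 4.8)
The plan is to check directly the four ingredients packaged into Dupont--Fink--Moci's Definitions 2.6 and 2.8: (a) a \emph{set species}; (b) deletion and contraction operations satisfying the minors-system relations; (c) \emph{connectedness}; and (d) a compatible \emph{multiplicative} (monoid-in-species) structure. For (a), recall that a matroid over $H$ on a finite set $E$ is an $H^\times$-class of a Grassmann--Pl\"ucker function $\varphi\colon E^r\to H$; a bijection $f\colon E\to E'$ sends such a class to the class of $\varphi\circ(f^{-1})^r$ (equivalently, it relabels the $H$-circuits), and this assignment is visibly functorial in $f$ and sends identities to identities, so $\Mat_H$ is a set species.

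For (b), the deletion $M\setminus S$ and the contraction $M/S$ are already defined for every $S\subseteq E$ in Definition \ref{def: deletion and contraction}, and the relations required of a minors system --- namely $M\setminus\emptyset=M=M/\emptyset$, $(M\setminus S)\setminus T=M\setminus(S\cup T)$, $(M/S)/T=M/(S\cup T)$, and $(M\setminus S)/T=(M/T)\setminus S$ for disjoint $S,T$ --- are precisely the content of Corollary \ref{cor: H-matroid minor properties}. Equivariance of deletion and contraction under the relabeling bijections of (a) is immediate from the Grassmann--Pl\"ucker descriptions in Proposition \ref{proposition: minor total} (this is Corollary \ref{cor: operations commutes with pushforward} specialized to the case where the morphism of hyperfields is an isomorphism). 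For (c), an element of $\Mat_H(\emptyset)$ is the $H^\times$-class of a rank-$0$ Grassmann--Pl\"ucker function on the one-point set $E^0$; up to scaling by $H^\times$ there is exactly one such class, represented by the function sending the empty tuple to $1_H$. Hence $\Mat_H(\emptyset)$ is the singleton consisting of the empty matroid, so $\Mat_H$ is connected.

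For (d), Propositions \ref{proposition: direct sum of GPF}, \ref{proposition: direction of circuits}, and \ref{pro: direct sum cryp} furnish maps $\Mat_H(E_1)\times\Mat_H(E_2)\to\Mat_H(E_1\sqcup E_2)$, $([M_1],[M_2])\mapsto[M_1\oplus M_2]$; associativity, commutativity, and unitality with unit the empty matroid all follow at once from the explicit formula \eqref{GPF formula}, and the associativity/unit coherences in the species framework are then formal. What remains is compatibility of $\oplus$ with deletion and contraction: for $A_i\subseteq E_{M_i}$ one must show $(M_1\oplus M_2)\setminus(A_1\sqcup A_2)=(M_1\setminus A_1)\oplus(M_2\setminus A_2)$ and the analogue with $/$ in place of $\setminus$. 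The deletion identity is immediate from the circuit description of $\oplus$ in Proposition \ref{proposition: direction of circuits} together with the circuit formula for deletion in Proposition \ref{proposition: minor total}, since every circuit of a direct sum is supported in exactly one summand. For contraction one proceeds either by first checking that $\oplus$ commutes with $H$-matroid duality --- readily verified on Grassmann--Pl\"ucker functions, using an ordering of $E_{M_1}\sqcup E_{M_2}$ in which all of $E_{M_1}$ precedes $E_{M_2}$, so that $\sgn_\le(\mathbf B,\mathbf{E\setminus B})$ factors as a product of the corresponding signs for the two blocks --- and then invoking $M/S=(M^*\setminus S)^*$ and the deletion case, or directly from the Grassmann--Pl\"ucker contraction formula of Proposition \ref{contraction} by the same kind of sign bookkeeping.

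The main obstacle is exactly this last point: tracking the signs that make direct sums compatible with contraction (equivalently, that direct sum commutes with duality). Everything else is either already established in \S\ref{section: minor} or a routine matching of our terminology with that of \cite{dupont2017universal} --- in particular, no genuinely new argument is needed beyond the sign computation, which parallels Case~1 in the proof of Proposition \ref{proposition: direct sum of GPF}.
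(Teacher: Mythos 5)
Your proposal is correct and takes essentially the same route as the paper, whose proof merely records connectedness, takes the multiplicative structure from direct sums, and asserts that the minors-system axioms (M1)--(M3) and (M4$'$)--(M8$'$) of Dupont--Fink--Moci can be checked as for ordinary matroids; you simply carry out the verification the paper leaves implicit, including the duality/sign bookkeeping showing $\oplus$ is compatible with contraction (where the extra cross-block sign is constant over all cobases, hence harmless on $H^\times$-classes). One small slip: equivariance of deletion and contraction under ground-set bijections is not a special case of Corollary \ref{cor: operations commutes with pushforward}, since pushforward concerns morphisms of the hyperfield $H$ rather than relabelings of $E$, but it is indeed immediate from the explicit circuit and Grassmann--Pl\"ucker formulas, so this does not affect the argument.
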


\begin{proof}
  Let $\mathbf{S}:=\Mat_H$. Clearly, $\mathbf{S}$ is connected since
  the empty matroid over $H$ is the only object of
  $\mathbf{S}[\varnothing]$. Multiplicative structure of $\mathbf{S}$
  comes from direct sums. The axioms (M1)-(M3), (M4\textprime
  -M8\textprime) can be easily checked as in the ordinary matroids
  case.
\end{proof}

\begin{rmk}
  It follows from the above observations that the construction in
  \cite[\S 2]{krajewski2015combinatorial} can be applied to define the
  Tutte polynomial for $k[\mathcal{M}_{iso}]$. Furthermore, one can also
  associate the universal Tutte characters in our setting.
\end{rmk}

\bibliography{hopf}\bibliographystyle{plain}
\end{document}